\documentclass{article}

\usepackage{amsthm}
\usepackage{amsmath, amssymb}
\usepackage{hyperref}
\usepackage{tikz}
\usetikzlibrary{matrix}
\usepackage[matrix, arrow, curve]{xy}

\newtheorem{Thm}{Theorem}[section]
\newtheorem{Def}[Thm]{Definition}
\newtheorem{Lem}[Thm]{Lemma}
\newtheorem{Prop}[Thm]{Proposition}
\newtheorem{Kor}[Thm]{Corollary}
\newtheorem{Rem}[Thm]{Remark}

\setlength{\parindent}{0pt}

\title{The cancellation of projective modules of rank $2$ with a trivial determinant}

\author{Tariq Syed\\
	Fakult\"at f\"ur Mathematik\\
	Universit\"at Duisburg-Essen\\
	Thea-Leymann-Stra{\ss}e 9\\
	D-45127 Essen\\
	tariq.syed@gmx.de}

\date{\today}
% Hint: \title{what ever}, \author{who care} and \date{when ever} could stand 
% before or after the \begin{document} command 
% BUT the \maketitle command MUST come AFTER the \begin{document} command! 
\begin{document}

\maketitle

\begin{abstract}
We study the cancellation property of projective modules of rank $2$ with a trivial determinant over Noetherian rings of dimension $\leq 4$. If $R$ is a smooth affine algebra of dimension $4$ over an algebraically closed field $k$ such that $6 \in k^{\times}$, then we prove that stably free $R$-modules of rank $2$ are free if and only if a Hermitian $K$-theory group $\tilde{V}_{SL} (R)$ is trivial.\\\\
%For any Noetherian commutative ring $R$ of dimension $\leq 4$, we use the generalized Vaserstein symbol in order to give a criterion for the triviality of the orbit space $Um (P_{0} \oplus R)/SL (P_{0} \oplus R)$ for any projective $R$-module $P_{0}$ of rank $2$ with a trivial determinant. We also prove that the group $Sp_{d} (R)$ acts transitively on the set $Um_{d} (R)$ of unimodular rows of length $d$ whenever $d$ is divisible by $4$ and $R$ is a smooth affine algebra of dimension $d$ over an algebraically closed field $k$ such that $d! \in k^{\times}$. If $d=4$, this implies that stably free $R$-modules of rank $2$ are free if and only if $\tilde{V}_{SL} (R) = 0$.\\\\
2010 Mathematics Subject Classification: 19A13, 13C10, 19G38, 14F42.\\ Keywords: generalized Vaserstein symbol, projective module, stably free module, cancellation.
\end{abstract}

\tableofcontents

\section{Introduction}

Let $R$ be a commutative ring. An important question in the study of projective modules is under which circumstances a finitely generated projective $R$-module $P$ is cancellative, i.e. under which circumstances any isomorphism $P \oplus R^{k} \cong Q \oplus R^{k}$ for some finitely generated projective $R$-module $Q$ and $k > 0$ already implies $P \cong Q$. Therefore one studies the fibers of the stabilization maps

\begin{center}
$\phi_{r}: \mathcal{V}_{r} (R) \rightarrow \mathcal{V}_{r+1} (R), [P] \mapsto [P \oplus R]$,
\end{center}

from the set of isomorphism classes of finitely generated projective $R$-modules of constant rank $r$ to the set of isomorphism classes of finitely generated projective $R$-modules of constant rank $r+1$. For any finitely generated projective $R$-module $P$ of rank $r$, the fiber $\phi_{r}^{-1} ([P \oplus R])$ can be described as the orbit space $Um (P \oplus R)/Aut (P \oplus R)$ of the set $Um (P \oplus R)$ of epimorphisms $P \oplus R \rightarrow R$ under the right action of the group $Aut (P \oplus R)$ of automorphisms of $P \oplus R$ given by pre-composition. Note that if $P$ is free, we can identify the set $Um (P \oplus R)$ with the set $Um_{r+1} (R)$ of unimodular rows of length $r+1$ over $R$ and the group $Aut (P \oplus R)$ with the group $GL_{r+1} (R)$. Similarly, one is interested in the orbit spaces $Um (P \oplus R)/SL (P \oplus R)$ and $Um (P \oplus R)/E (P \oplus R)$, where $SL (P \oplus R)$ is the subgroup of $Aut (P \oplus R)$ of automorphisms of determinant $1$ and $E (P \oplus R)$ is its subgroup generated by elementary automorphisms. %In fact, if $P$ is a projective $R$-module of rank $r$ with a fixed trivialization $\theta: det (P) \xrightarrow{\cong} R$ of its determinant and if we let $\theta^{+}$ be the induced trivialization of $det (P \oplus R)$, then the orbit space $Um (P \oplus R)/SL (P \oplus R)$ describes the fiber ${\phi^{o}_{r}}^{-1} ([(P \oplus R, \theta^{+})])$ of $[(P \oplus R, \theta^{+})]$ under the stabilization map $\phi_{r}^{o} : \mathcal{V}_{r}^{o} (R) \rightarrow \mathcal{V}_{r+1}^{o} (R)$ between the sets of isomorphism classes of oriented projective modules of the respective ranks.
If $P$ is free, then we can identify $SL (P \oplus R)$ and $E (P \oplus R)$ with the subgroups $SL_{r+1} (R)$ and $E_{r+1} (R)$ of $GL_{r+1} (R)$ respectively.\\
Now let $R$ be a Noetherian commutative ring of dimension $d \geq 3$ and let $P$ be a projective $R$-module of constant rank $r$. It follows from \cite[Chapter IV, Theorem 3.4]{HB} that $Um (P \oplus R)/E (P \oplus R)$ is trivial if $r \geq d+1$; in particular, $P$ is cancellative in this case. Furthermore, it was proven in \cite{S1} that $P$ is cancellative if $r = d$ whenever $R$ is an affine algebra over an algebraically closed field. It follows from \cite{B} that the same result holds for affine algebras over a perfect field $k$ with $c.d. (k) \leq 1$ and $d! \in k^{\times}$. In \cite{FRS}, it was proven that stably free modules of rank $d-1$ are free whenever $R$ is a smooth affine algebra over an algebraically closed field $k$ such that $(d-1)! \in k^{\times}$. Nevertheless, it follows from \cite{NMK} that, for any algebraically closed field $k$, there exists a smooth affine $k$-algebra $R$ of dimension $d=4$ which admits a non-free stably free module of rank $2$. The cancellation of projective modules of rank $2$ over smooth affine algebras of dimension $4$ over algebraically closed fields can hence be considered a particularly subtle problem. Therefore it is of interest to find a general criterion for a projective $R$-module of rank $2$ with a trivial determinant over such algebras to be cancellative.\\
Now recall that we have defined in \cite{Sy} a generalized Vaserstein symbol

\begin{center}
$V_{\theta_{0}}: Um (P_{0} \oplus R)/E (P_{0} \oplus R) \rightarrow \tilde{V} (R)$
\end{center}

associated to any projective module $P_{0}$ of rank $2$ over any commutative ring $R$ with a fixed trivialization $\theta_{0}: R \xrightarrow{\cong} det(P_{0})$ of its determinant. The group $\tilde{V} (R)$ is canonically isomorphic to the so-called elementary symplectic Witt group $W_E (R)$ (cp. \cite[\S 3]{SV}). In this paper, we also prove that the generalized Vaserstein symbol descends to a map

\begin{center}
$V_{\theta_{0}}: Um (P_{0} \oplus R)/SL (P_{0} \oplus R) \rightarrow \tilde{V}_{SL} (R)$,
\end{center}

which we call the generalized Vaserstein symbol modulo $SL$ (cp. Theorem \ref{T3.1}). The group $\tilde{V}_{SL} (R)$ is the cokernel of a hyperbolic map $SK_1 (R) \rightarrow \tilde{V} (R)$.\\
For a Noetherian ring $R$ of dimension $\leq 4$, we then give criteria for the surjectivity and injectivity of the generalized Vaserstein symbol modulo SL (cp. Theorem \ref{T3.2} and Theorem \ref{SLInjective}). As a consequence, we obtain a criterion for the triviality of the orbit space $Um (P_{0} \oplus R)/SL (P_{0} \oplus R)$ (cp. Corollary \ref{C3.7}). By further examining this criterion for smooth affine algebras of dimension $4$ over algebraically closed fields (cp. Corollary \ref{C3.18}), we prove our main result in this paper (cp. Theorem \ref{T3.19}):\\\\
\textbf{Theorem.} Let $R$ be a $4$-dimensional smooth affine algebra over an algebraically closed field $k$ with $6 \in k^{\times}$. Then stably free $R$-modules of rank $2$ are free if and only if $\tilde{V}_{SL} (R) = 0$.\\\\
The group $\tilde{V}_{SL} (R)$ is a Hermitian $K$-theory group and hence forms part of a theory which behaves in many aspects like a cohomology theory; this makes the group $\tilde{V}_{SL} (R)$ as computable as one might hope. In the situation of the theorem, it is actually a $2$-torsion group (cp. the proof of Corollary \ref{C3.20}). In particular, $\tilde{V}_{SL} (R) = 0$ if and only if $\tilde{V}_{SL} (R)$ is $2$-divisible. Motivated by this, we use the Gersten-Grothendieck-Witt spectral sequence in order to find cohomological criteria for the $2$-divisibility of the groups $\tilde{V} (R)$ and $\tilde{V}_{SL} (R)$ (cp. Proposition \ref{P2.8}). These criteria enable us to give cohomological criteria for the triviality of stably free modules of rank $2$ (cp. Corollary \ref{C3.20}). As an immediate consequence, we obtain the following result (cp. Corollary \ref{C3.21} in the text):\\\\
\textbf{Corollary.} Let $R$ be a $4$-dimensional smooth affine algebra over an algebraically closed field $k$ with $6 \in k^{\times}$ and let $X = Spec(R)$. Moreover, assume that $CH^{i} (X) = 0$ for $i=1,2,3,4$ and $H^{2} (X, \textbf{I}^{3}) = 0$. Then all finitely generated projective $R$-modules are componentwise free.\\\\
In this corollary, we denote by $CH^{i} (X)$ for $i=1,2,3,4$ the Chow groups of cycles of codimension $i$ on $X$ and by $\textbf{I}^{3}$ the sheaf associated to the third power of the fundamental ideal in the Witt ring (cp. \cite[Section 3.1]{Mo}).\\
Note that the cohomological criteria in the corollary are satisfied if $X$ is stably $\mathbb{A}^{1}$-contractible. The generalized Serre conjecture on algebraic vector bundles asserts that all algebraic vector bundles over topologically contractible smooth affine complex varieties are trivial; the conjecture is known to hold in dimensions $\leq 2$, but is open in higher dimensions. Another open conjecture asserts that topologically contractible smooth affine complex varieties are stably $\mathbb{A}^{1}$-contractible (cp. \cite[Conjecture 5.3.11]{AO}). By our corollary above, this would imply the generalized Serre conjecture in dimension $4$ (see Remark \ref{R3.22}).\\
The paper is organized as follows: We first recall in Section \ref{2.1} how oriented projective modules which are stably isomorphic to a given oriented finitely generated projective module $(P, \theta)$ can be classified in terms of the orbit space $Um (P \oplus R)/SL (P \oplus R)$. In Sections \ref{2.2} and \ref{2.3} we define the groups $W'_{SL} (R)$ and $V_{SL} (R)$ for any commutative ring $R$ and discuss in particular the isomorphism $W'_{SL} (R) \cong V_{SL} (R)$. Sections \ref{2.4} and \ref{2.5} serve as a brief introduction to motivic homotopy theory and higher Grothendieck-Witt groups. While most of this is purely expository, we will then study the Gersten-Grothendieck-Witt spectral sequence at the end of Section \ref{2.4} in order to prove some new cohomological criteria for the $2$-divisibility of $\tilde{V} (R)$ and $\tilde{V}_{SL} (R)$ whenever $R$ is a smooth affine algebra of dimension $4$ over an algebraically closed field $k$ with $char(k) \neq 2$. In Section \ref{3.1}, we introduce the generalized Vaserstein symbol modulo $SL$ and prove the main results of this paper in the subsequent sections. In particular, we prove our criteria for the surjectivity and injectivity of the generalized Vaserstein symbol modulo $SL$ in Section \ref{3.2} as well as our criterion for the triviality of the orbit space $Um (P_{0} \oplus R)/SL (P_{0} \oplus R)$ whenever $R$ is a Noetherian commutative ring of dimension $\leq 4$. In the subsequent section, we can in fact give descriptions of the orbit spaces $Um (P_{0} \oplus R)/E (P_{0} \oplus R)$ and $Um (P_{0} \oplus R)/SL (P_{0} \oplus R)$. Finally, Section \ref{3.4} is dedicated to the study of symplectic orbits of unimodular rows.

\subsection*{Acknowledgements}

The author would like to thank both his PhD advisors Jean Fasel and Andreas Rosenschon for many helpful discussions, for their encouragement and for their support. Furthermore, the author would like to thank Ravi Rao for his very helpful comments on elementary and symplectic orbits of unimodular rows and Anand Sawant for many helpful discussions on motivic homotopy theory. The author would also like to thank Aravind Asok for his very helpful comments on the generalized Serre conjecture on algebraic vector bundles. Moreover, he would like to thank the anonymous referee for suggesting changes which greatly improved the exposition of the paper. Finally, the author would like to thank Marc Levine and Marco Schlichting for their encouragement and for their interest in this work. The author was funded by a grant of the DFG priority program 1786 "Homotopy theory and algebraic geometry".

\section{Preliminaries}\label{Preliminaries}\label{2}

Let $R$ be a commutative ring. For any finitely generated projective $R$-module $P$, we denote by $Um (P)$ the set of epimorphisms $P \rightarrow R$ and by $Aut (P)$ the group of automorphisms of $P$; its group of automorphisms with determinant $1$ is denoted $SL (P)$. For any direct sum $P = \bigoplus_{i=1}^{n} P_{i}$ of finitely generated projective $R$-modules, we let $E (P_{1},...,P_{n})$ (or simply $E (P)$ if the decomposition is understood) be the subgroup of $Aut (P)$ generated by elementary automorphisms, i.e. automorphisms of the form $id_{P} + s$, where $s:P_{j} \rightarrow P_{i}$ is an $R$-linear map for $i \neq j$.\\
We denote by $Unim.El. (P)$ the set of unimodular elements of $P$. Note that $Aut (P)$ and hence any subgroup of $Aut (P)$ act on the right on $Um (P)$ and on the left on $Unim.El. (P)$.\\
If $P = R^{n}$, we naturally identify $Um (P)$ with the set $Um_{n} (R)$ of unimodular rows of length $n$ and $Unim.El. (P)$ with the set $Um_{n}^{t} (R)$ of unimodular columns of length $n$. We also identify $Aut (P)$, $SL (P)$ and $E (P)$ with $GL_{n} (R)$, $SL_{n} (R)$ and $E_{n} (R)$ in this case.

\subsection{Classification of stably isomorphic oriented projective modules}\label{Classification of stably isomorphic oriented projective modules}\label{2.1}

Again, let $R$ be a commutative ring. For any $r \in \mathbb{N}$, let $\mathcal{V}_{r} (R)$ be the set of isomorphism classes of projective modules of constant rank $r$. Furthermore, we denote by $\mathcal{V}_{r}^{o} (R)$ the set of isomorphism classes of oriented projective modules of rank $r$, i.e. isomorphism classes of pairs $(P, \theta)$, where $P$ is projective of constant rank $r$ and $\theta: \det (P) \xrightarrow{\cong} R$ is an isomorphism. An isomorphism between two such pairs $(P, \theta)$ and $(P', \theta')$ is an isomorphism $k: P \xrightarrow{\cong} P'$ such that $\theta = \theta' \circ \det(k)$. Note that if $(P, \theta)$ is an oriented projective module of rank $r$, then there is an induced orientation on $P \oplus R$ given by the composite $\theta^{+}: \det (P \oplus R) \cong \det (P) \xrightarrow{\theta} R$.\\
We now consider the stabilization maps

\begin{center}
$\phi_{r}^{o} : \mathcal{V}_{r}^{o} (R) \rightarrow \mathcal{V}_{r+1}^{o} (R), [(P, \theta)] \mapsto [(P \oplus R, \theta^{+})]$
\end{center}
 
from isomorphism classes of oriented projective modules of rank $r$ to isomorphism classes of oriented projective modules of rank $r+1$. We fix an oriented projective module $(P \oplus R, \theta^{+})$ representing an element of ${\mathcal{V}}_{n+1}^{o} (R)$ in the image of this map.\\
An element $[(P', \theta')]$ of ${\mathcal{V}}_{n}^{o} (R)$ lies in the fiber over $[(P \oplus R, \theta^{+})]$ if and only if there is an isomorphism $i: P' \oplus R \xrightarrow{\cong} P \oplus R$ such that $\theta^{+} \circ \det (i) = \theta'^{+}$. Any such isomorphism yields an element of $Um (P \oplus R)$ given by the composite 

\begin{center}
$a (i): P \oplus R \xrightarrow{{i}^{-1}} P' \oplus R \xrightarrow{\pi_R} R$.
\end{center}

If one chooses another oriented projective module $(P'', \theta'')$ representing the isomorphism class of $(P', \theta')$ and any isomorphism $j: P'' \oplus R \xrightarrow{\cong} P \oplus R$ with $\theta''^{+} = \theta^{+} \circ \det(j)$, the resulting element $a (j)$ of $Um (P \oplus R)$ still lies in the same orbit of $Um (P \oplus R)/SL (P \oplus R)$:
for if we choose an isomorphism $k: P' \xrightarrow{\cong} P''$ with $\theta' = \theta'' \circ \det(k)$, then ${j} {(k \oplus id_{R})} {i}^{-1} \in SL (P \oplus R)$ and we have an equality

\begin{center}
$a (i) = a (j) \circ ({j} {(k \oplus id_{R})} {i}^{-1})$.
\end{center}

Thus, we obtain a well-defined map

\begin{center}
${\phi_{r}^{o}}^{-1} ([(P \oplus R, \theta^{+})]) \rightarrow Um (P \oplus R)/SL (P \oplus R)$.
\end{center}

Conversely, any element $a \in Um (P \oplus R)$ gives an element of ${\mathcal{V}}_{r}^{o} (R)$ lying over $[(P \oplus R, \theta^{+})]$: If we let $P' = \ker (a)$, then the short exact sequence

\begin{center}
$0 \rightarrow P' \rightarrow P \oplus R \xrightarrow{a} R \rightarrow 0$
\end{center}

is split and any section $s$ of $a$ induces an isomorphism $i: P' \oplus R \xrightarrow{\cong} P \oplus R$. The induced isomorphism $\det(i): \det (P' \oplus R) \xrightarrow{\cong} \det (P \oplus R)$ does not depend on the section $s$; hence we can canonically define an orientation $\theta'$ on $P'$ given by the composite

\begin{center}
$\det(P') \cong \det (P' \oplus R) \xrightarrow{\det(i)} \det (P \oplus R) \xrightarrow{\theta^{+}} R$.
\end{center}

Then $[(P', \theta')] \in {\phi_{r}^{o}}^{-1} ([(P \oplus R, \theta^{+})])$. Note that this assignment only depends on the class of $a$ in $Um (P \oplus R)/SL (P \oplus R)$.\\
Thus, we also obtain a well-defined map

\begin{center}
$Um (P \oplus R)/SL(P \oplus R) \rightarrow {\phi_{r}^{o}}^{-1} ([(P \oplus R, \theta^{+})])$.
\end{center}

One can check that the maps ${\phi_{r}^{o}}^{-1} ([(P \oplus R, \theta^{+})]) \rightarrow Um (P \oplus R)/SL (P \oplus R)$ and $Um (P \oplus R)/SL(P \oplus R) \rightarrow {\phi_{r}^{o}}^{-1} ([(P \oplus R, \theta^{+})])$ are inverse to each other. Note that $[(P, \theta)]$ corresponds to the class represented by the canonical projection $\pi_{R} : P \oplus R \rightarrow R$ under these bijections. Altogether, we have a pointed bijection between the sets $Um (P \oplus R)/SL(P \oplus R)$ and ${\phi_{r}^{o}}^{-1} ([(P \oplus R, \theta^{+})])$ equipped with $[\pi_{R}]$ and $[(P, \theta)]$ as basepoints respectively.\\
Similarly, we can consider the stabilization maps

\begin{center}
$\phi_{r} : \mathcal{V}_{r} (R) \rightarrow \mathcal{V}_{r+1} (R), [P] \mapsto [P \oplus R]$
\end{center}

and deduce a pointed bijection between the sets $Um (P \oplus R)/Aut(P \oplus R)$ and $\phi_{r}^{-1} ([P \oplus R])$ equipped with $[\pi_{R}]$ and $[P]$ as basepoints respectively.\\
For any oriented projective module $(P, \theta)$ of rank $r$ as above, the canonical projection $Um (P \oplus R)/SL (P \oplus R) \rightarrow Um (P \oplus R)/Aut (P \oplus R)$ then corresponds to the map ${\phi_{r}^{o}}^{-1} ([(P \oplus R, \theta^{+})]) \rightarrow \phi_{r}^{-1} ([P \oplus R])$ forgetting the orientation of $P$.

\subsection{The groups $W'_G (R)$}\label{The groups $W'_G (R)$}\label{2.2}

Let R be a commutative ring. Moreover, let $G$ be any group such that $E (R) \subset G \subset SL (R)$. For any $n \in \mathbb{N}$, we denote by $A_{2n} (R)$ the set of alternating invertible matrices of rank $2n$. We inductively define an element $\psi_{2n} \in  A_{2n} (R)$ by setting
 
\begin{center}
$\psi_2 =
\begin{pmatrix}
0 & 1 \\
- 1 & 0
\end{pmatrix}
$
\end{center}
 
\noindent and $\psi_{2n+2} = \psi_{2n} \perp \psi_2$. For any $m < n$, there is an embedding of $A_{2m} (R)$ into $A_{2n} (R)$ given by $M \mapsto M \perp \psi_{2n-2m}$. We denote by $A (R)$ the direct limit of the sets $A_{2n} (R)$ under these embeddings. Two alternating invertible matrices $M \in A_{2m} (R)$ and $N \in A_{2n} (R)$ are called $G$-equivalent, $M \sim_G N$, if there is an integer $s \in \mathbb{N}$ and a matrix $E \in SL_{2n+2m+2s} (R) \cap G$ such that

\begin{center}
$M \perp \psi_{2n+2s} = E^{t} (N \perp \psi_{2m+2s}) E$.
\end{center}

The set of equivalence classes $A (R)/{\sim_G}$ is denoted $W'_G (R)$. Since
 
\begin{center}
$
\begin{pmatrix}
0 & id_{s} \\
id_{r} & 0
\end{pmatrix}
\in E_{r+s} (R)$
\end{center}
 
for even $rs$, it follows that the orthogonal sum equips $W'_G (R)$ with the structure of an abelian monoid. As it is shown in \cite{SV}, this abelian monoid is actually an abelian group. An inverse for an element of $W'_G (R)$ represented by a matrix $N \in A_{2n} (R)$ is given by the element represented by the matrix $\sigma_{2n} N^{-1} \sigma_{2n}$, where the matrices $\sigma_{2n}$ are inductively defined by
 
\begin{center}
$\sigma_2 =
\begin{pmatrix}
0 & 1 \\
1 & 0
\end{pmatrix}
$
\end{center}
 
\noindent and $\sigma_{2n+2} = \sigma_{2n} \perp \sigma_2$.
Now recall that one can assign to any alternating invertible matrix $M$ an element $\mathit{Pf} (M)$ of $R^{\times}$ called the Pfaffian of $M$. The Pfaffian satisfies the following formulas:
 
\begin{itemize}
\item $\mathit{Pf}(M \perp N) = \mathit{Pf} (M) \mathit{Pf}(N)$ for all $M \in A_{2m} (R)$ and $N \in A_{2n} (R)$;
\item $\mathit{Pf}(G^{t} N G) = \det (G) \mathit{Pf}(N)$ for all $G \in GL_{2n} (R)$ and $N \in A_{2n} (R)$;
\item ${\mathit{Pf} (N)}^{2} = \det (N)$ for all $N \in A_{2n} (R)$;
\item $\mathit{Pf} (\psi_{2n}) = 1$ for all $n \in \mathbb{N}$.
\end{itemize}

\noindent Therefore the Pfaffian determines a group homomorphism $\mathit{Pf}: W'_G (R) \rightarrow R^{\times}$; its kernel is denoted $W_G (R)$. Note that if $G = E (R)$, we recover the definition of the so-called elementary symplectic Witt group $W_E (R)$ of $R$. If $G = SL (R)$, we denote $W_G (R)$ simply by $W_{SL} (R)$.
 
\subsection{The group $V_{SL}(R)$}\label{The group $V_{SL}(R)$}\label{2.3}

Again, let $R$ be a commutative ring. Consider the set of triples $(P, g, f)$, where $P$ is a finitely generated projective $R$-module and $f,g$ are alternating isomorphisms (cp. \cite[Section 2.A]{Sy}). Two such triples $(P, f_0, f_1)$ and $(P', f'_0, f'_1)$ are called isometric if there is an isomorphism $h: P \rightarrow P^{'}$ such that $f_{i} = h^{\vee} f'_{i} h$ for $i=0,1$. We denote by $[P, g, f]$ the isometry class of the triple $(P, g, f)$.\\
Let $V (R)$ be the quotient of the free abelian group on isometry classes of triples as above modulo the subgroup generated by the relations
 
\begin{itemize}
\item $[P \oplus P', g \perp g', f \perp f'] = [P, g, f] + [P', g', f']$ for alternating isomorphisms $f,g$ on $P$ and $f',g'$ on $P'$;
\item $[P, f_0, f_1] + [P, f_1, f_2] = [P, f_0, f_2]$ for alternating isomorphisms $f_{0}, f_{1}$ and $f_{2}$ on $P$.
\end{itemize}

Note that these relations yield the useful identities
 
\begin{itemize}
\item $[P, f, f] = 0$ in $V (R)$ for any alternating isomorphism $f$ on $P$;
\item $[P, g, f] = - [P, f, g]$ in $V (R)$ for alternating isomorphisms $f,g$ on $P$;
\item $[P, g, {\beta}^{\vee} {\alpha}^{\vee} f \alpha \beta] = [P, f, {\alpha}^{\vee} f \alpha] + [P, g, {\beta}^{\vee} f \beta]$ in $V (R)$ for all automorphisms $\alpha, \beta$ of $P$ and alternating isomorphisms $f,g$ on $P$.
\end{itemize}

We may also restrict this construction to free $R$-modules of finite rank. The corresponding group will be denoted $V_{\mathit{free}} (R)$. Note that there is an obvious group homomorphism $V_{\mathit{free}} (R) \rightarrow V (R)$. Actually, this map is an isomorphism:\\
Recall that for any finitely generated projective $R$-module $P$ there is a standard alternating isomorphism

\begin{center}
$H_P =
\begin{pmatrix}
0 & id_{P^{\vee}} \\
-can & 0
\end{pmatrix}
: P \oplus P^{\vee} \rightarrow P^{\vee} \oplus P^{\vee\vee}$
\end{center}

on $P \oplus P^{\vee}$ called the hyperbolic isomorphism on $P$.\\ 
Now let $(P, g, f)$ be a triple as above. Since $P$ is a finitely generated projective $R$-module, there is another $R$-module $Q$ such that $P \oplus Q \cong R^{n}$ for some $n \in \mathbb{N}$. In particular, $P \oplus P^{\vee} \oplus Q \oplus Q^{\vee}$ is free of rank $2n$. Therefore the triple
 
\begin{center}
$(P \oplus P^{\vee} \oplus Q \oplus Q^{\vee}, g \perp can\; g^{-1} \perp H_Q, f \perp can\; g^{-1} \perp H_Q)$
\end{center}
 
represents an element of $V_{\mathit{free}} (R)$. It can be checked that this assignment descends to a well-defined group homomorphism
 
\begin{center}
$V (R) \rightarrow V_{\mathit{free}} (R)$.
\end{center}
 
By construction, this homomorphism is inverse to the canonical morphism $V_{\mathit{free}} (R) \rightarrow V (R)$. Thus, $V_{\mathit{free}} (R) \cong V (R)$.\\\\
The group $V_{\mathit{free}} (R)$ can be seen to be isomorphic to $W'_E (R)$ as follows: If $M \in A_{2m} (R)$ represents an element of $W'_E (R)$, then we assign to it the class in $V_{\mathit{free}} (R)$ represented by $[R^{2m}, \psi_{2m}, M]$. This assignment descends to a well-defined homomorphism $\nu: W'_E (R) \rightarrow V_{\mathit{free}} (R)$.\\
Now let us describe the inverse $\xi: V_{\mathit{free}} (R) \rightarrow W'_{E} (R)$ to this homomorphism. Let $(L, g, f)$ be a triple with $L$ free and $g,f$ alternating isomorphisms on $L$. We can choose an isomorphism $\alpha: R^{2n} \xrightarrow{\cong} L$ and consider the alternating isomorphism
 
\begin{center}
$({\alpha}^{t} f \alpha) \perp \sigma_{2n} {({\alpha}^{t} g \alpha)}^{-1} \sigma_{2n}^{\vee}: R^{2n} \oplus {(R^{2n})}^{\vee} \rightarrow {(R^{2n})}^{\vee} \oplus R^{2n}$.
\end{center}
 
With respect to the standard basis of $R^{2n}$ and its dual basis of ${(R^{2n})}^{\vee}$, we may interpret this alternating isomorphism as an element of $A_{4n} (R)$ and consider its class $\xi ([L, g, f])$ in $W'_E (R)$. It is proven in \cite{FRS} that this assignment induces a well-defined homomorphism $\xi: V_{\mathit{free}} (R) \rightarrow W'_E (R)$. By construction, $\nu$ and $\xi$ are obviously inverse to each other and therefore identify $W'_E (R)$ with $V_{\mathit{free}} (R)$. From now on, we denote by $\tilde{V} (R)$ the subgroup of $V (R)$ corresponding to $W_E (R)$ under the isomorphisms $V (R) \cong V_{\mathit{free}} (R) \cong W'_E (R)$.\\\\
In view of the previous paragraph, we obtain the following new presentation of the group $W'_{SL} (R)$:
Let $V_{SL} (R)$ be the quotient of the free abelian group on isometry classes of triples $(P, g, f)$ modulo the subgroup generated by the relations
 
\begin{itemize}
\item $[P \oplus P', g \perp g', f \perp f'] = [P, g, f] + [P', g', f']$ for alternating isomorphisms $f,g$ on $P$ and $f',g'$ on $P'$;
\item $[P, f_0, f_1] + [P, f_1, f_2] = [P, f_0, f_2]$ for alternating isomorphisms $f_{0}, f_{1}, f_{2}$ on $P$;
\item $[P, g, f] = [P, g, \varphi^{\vee} f \varphi]$ for alternating isomorphisms $g, f$ on $P$ and $\varphi \in SL (P)$.
\end{itemize}

Then $V_{SL} (R) \cong W'_{SL} (R)$. Automatically, there is a canonical epimorphism $\tilde{V} (R) \rightarrow \tilde{V}_{SL} (R)$ corresponding to the map $W_E (R) \rightarrow W_{SL} (R)$. We denote by $\tilde{V}_{SL} (R)$ the subgroup of $V_{SL} (R)$ corresponding to the group $W_{SL} (R)$. Again, there is a canonical epimorphism $\tilde{V} (R) \rightarrow \tilde{V}_{SL} (R)$ corresponding to the map $W_E (R) \rightarrow W_{SL} (R)$.

\begin{Lem}\label{L2.1}
If $[P, \chi, \chi_{1}] = [P, \chi, \chi_{2}] \in V_{SL} (R)$ for non-degenerate alternating forms $\chi$, $\chi_{1}$ and $\chi_{2}$ on a finitely generated projective $R$-module $P$, then we have an equality ${\alpha}^{t} (\chi_{1} \perp \psi_{2n}) \alpha = \chi_{2} \perp \psi_{2n}$ for some $n \in \mathbb{N}$ and some automorphism $\alpha \in SL (P \oplus R^{2n})$.
\end{Lem}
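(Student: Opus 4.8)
The plan is to first collapse the hypothesis into the vanishing of a single class, and then to read the desired isometry directly off the description of $SL$-equivalence that is built into $W'_{SL}(R)$, disposing of the free case by the very definition of that group and descending to the general projective case by a hyperbolic cancellation trick.

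First I would reduce to a vanishing statement. Using the useful identity $[P,g,f] = -[P,f,g]$ together with additivity $[P,f_0,f_1] + [P,f_1,f_2] = [P,f_0,f_2]$, the hypothesis $[P,\chi,\chi_1] = [P,\chi,\chi_2]$ rewrites as $[P,\chi_2,\chi_1] = [P,\chi_2,\chi] + [P,\chi,\chi_1] = 0$ in $V_{SL}(R)$. The converse direction is instructive and orients the proof: given $\alpha \in SL(P \oplus R^{2n})$ with $\alpha^{t}(\chi_1 \perp \psi_{2n})\alpha = \chi_2 \perp \psi_{2n}$, the relation $[P,g,f] = [P,g,\varphi^{\vee} f \varphi]$ for $\varphi \in SL(P)$ and additivity give $[P,\chi_2,\chi_1] = [P \oplus R^{2n}, \chi_2 \perp \psi_{2n}, \chi_1 \perp \psi_{2n}] = 0$. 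It therefore suffices to invert this, i.e. to show that $[P,\chi_2,\chi_1] = 0$ forces a stable $SL$-isometry.

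Next I would settle the free case $P = R^{2m}$. Under the identifications $V_{\mathit{free}}(R) \xrightarrow{\xi} W'_{SL}(R)$ the class $[R^{2m},\chi_2,\chi_1]$ is sent to the class of $\chi_1 \perp \sigma_{2m}\chi_2^{-1}\sigma_{2m}^{\vee}$, which in the group $W'_{SL}(R)$ equals $[\chi_1] - [\chi_2]$, since the inverse of $[\chi_2]$ is represented by $\sigma_{2m}\chi_2^{-1}\sigma_{2m}$. Hence $[P,\chi_2,\chi_1] = 0$ forces $[\chi_1] = [\chi_2]$ in $W'_{SL}(R)$, and by the definition of $SL$-equivalence this yields an integer $s$ and a matrix $E \in SL_{4m+2s}(R)$ with $\chi_1 \perp \psi_{2m+2s} = E^{t}(\chi_2 \perp \psi_{2m+2s})E$; taking $\alpha = E^{-1}$ and $n = m+s$ gives the statement for free $P$.

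Finally I would descend to arbitrary projective $P$. Choosing $Q$ with $P \oplus Q \cong R^{m}$ and using $V(R) \cong V_{\mathit{free}}(R)$, the vanishing class is carried to $[R^{2m}, \chi_2 \perp C_0, \chi_1 \perp C_0] = 0$, where $C_0 = can\,\chi_2^{-1} \perp H_Q$ lives on $N := P^{\vee} \oplus Q \oplus Q^{\vee}$ and $P \oplus N \cong R^{2m}$. Applying the free case to this class produces an integer $k$ and an $SL$-isometry $\chi_1 \perp C_0 \perp \psi_{2k} \cong \chi_2 \perp C_0 \perp \psi_{2k}$. The key move is now to add the form $(P,\chi_2)$ orthogonally to both sides: rearranging via the standard hyperbolic identities $(P,\chi_2) \perp (P^{\vee}, can\,\chi_2^{-1}) \cong H_P$ and $H_P \perp H_Q \cong \psi_{2m}$ identifies $C_0 \perp \chi_2 \cong \psi_{2m}$ on the \emph{free} module $N \oplus P \cong R^{2m}$, so the auxiliary summand is absorbed into copies of $\psi_2$ and one is left with $\chi_1 \perp \psi_{2(m+k)} \cong \chi_2 \perp \psi_{2(m+k)}$ over $P \oplus R^{2(m+k)}$, exactly as required. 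I expect the main obstacle to be not the algebra but the determinant bookkeeping: one must verify that the identifications $(P,\chi_2) \perp (P^{\vee}, can\,\chi_2^{-1}) \cong H_P$, $H_P \perp H_Q \cong \psi$, and the reordering isometries all lie in $SL$, and that the chain $V(R) \cong V_{\mathit{free}}(R) \cong W'_{SL}(R)$ respects the $SL$-structure. All of these reduce to the Pfaffian formulas $\mathit{Pf}(G^{t}NG) = \det(G)\mathit{Pf}(N)$ and $\mathit{Pf}(\psi_{2n}) = 1$, which force $\det = 1$ once one checks $\mathit{Pf}(\chi_2)\mathit{Pf}(can\,\chi_2^{-1}) = 1$; it is this Pfaffian/$SL$ bookkeeping that I anticipate being the delicate part of the argument.
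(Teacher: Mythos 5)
Your proposal is correct, and at the strategic level it mirrors the paper's proof: reduce the hypothesis to the vanishing of a single class, stabilize to a free module by adding an orthogonal complement, invoke the definition of $\sim_{SL}$-equivalence in $W'_{SL} (R)$ to extract a matrix of determinant one, and transport the resulting isometry back to $P \oplus R^{2n}$. Where you genuinely diverge is in the source of the complement. The paper invokes \cite[Lemma 4.8]{Sy} twice: once to produce an abstract pair $(P_{1}, \chi')$ with $\tau^{t} (\chi_{1} \perp \chi') \tau = \psi_{2m}$, which turns $[P, \chi_{1}, \chi_{2}] = 0$ into the triviality of the matrix class $[\tau^{t} (\chi_{2} \perp \chi') \tau]$ in $W'_{SL} (R)$, and a second time to absorb the leftover summand $\chi' \perp \psi_{2u}$ into a standard form $\psi_{2n}$. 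You instead take the explicit complement $C_{0} = can\, \chi_{2}^{-1} \perp H_{Q}$ --- precisely the one built into the paper's isomorphism $V (R) \cong V_{\mathit{free}} (R)$ --- and use this single complement in both roles: it makes both triples free simultaneously, and after adding $(P, \chi_{2})$ it is absorbed via $(P, \chi_{2}) \perp (P^{\vee}, can\, \chi_{2}^{-1}) \cong H_{P}$ and $H_{P} \perp H_{Q} \cong H_{R^{m}} \cong \psi_{2m}$. In effect you re-prove the needed case of \cite[Lemma 4.8]{Sy} inline; this makes your argument more self-contained and requires only one complement instead of two, at the price of checking the hyperbolic identities yourself.

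The one thing to correct is your closing worry: the ``Pfaffian/$SL$ bookkeeping'' you anticipate is a non-issue. Every identification isometry you use (the reorderings, $C_{0} \perp \chi_{2} \cong \psi_{2m}$, $H_{R^{m}} \cong \psi_{2m}$) is applied to both forms at once, so the final automorphism of $P \oplus R^{2n}$ has the shape $w (E \oplus id_{P}) w^{-1}$ (up to taking an inverse), where $E \in SL_{4m+2s} (R)$ comes from the definition of $\sim_{SL}$ and $w$ is the chain of identifications. Since the determinant of an automorphism of a finitely generated projective module is invariant under conjugation by an arbitrary isomorphism, $\det = 1$ holds automatically, whatever the determinants of the identifications are; this conjugation mechanism is also exactly why the paper's composite $\alpha$, being a conjugate of $\zeta^{-1} \oplus id_{P_{2}}$, visibly has determinant $1$. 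Incidentally, the computation $\mathit{Pf} (\chi_{2}) \mathit{Pf} (can\, \chi_{2}^{-1}) = 1$ you propose is not literally meaningful when $P$ is not free, since Pfaffians are defined only for alternating matrices --- but by the above, no such computation is needed.
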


\begin{proof}
The equality $[P, \chi, \chi_{1}] = [P, \chi, \chi_{2}]$ means that $[P, \chi_{1}, \chi_{2}] = 0$. By \cite[Lemma 4.8]{Sy}, it follows that there is a finitely generated projective $R$-module $P_{1}$ with a non-degenerate alternating form $\chi'$ on $P_{1}$ and, moreover, with an isomorphism $\tau: R^{2m} \xrightarrow{\cong} P \oplus P_{1}$ such that ${\tau}^{t} (\chi_{1} \perp \chi') \tau = \psi_{2m}$. In particular, one has $0 = [P, \chi_{1}, \chi_{2}] = [R^{2m}, \psi_{2m}, {\tau}^{t}(\chi_{2} \perp \chi')\tau] \in \tilde{V}_{SL} (R)$. Therefore the class of ${\tau}^{t}(\chi_{2} \perp \chi')\tau$ in $W'_{SL} (R)$ is trivial and hence there exist $u \geq 1$ and $\zeta \in SL (R^{2m+2u})$ such that ${\zeta}^{t} (({\tau}^{t} (\chi_{2} \perp \chi') \tau) \perp \psi_{2u}) \zeta = \psi_{2m+2u}$.\\
Again by \cite[Lemma 4.8]{Sy}, there exists a finitely generated projective $R$-module $P_{2}$ with a non-degenerate alternating form $\chi''$ on $P_{2}$ and with an isomorphism $\beta: R^{2n} \xrightarrow{\cong} P_{1} \oplus R^{2u} \oplus P_{2}$ such that ${\beta}^{t} (\chi' \perp \psi_{2u} \perp \chi'') \beta = \psi_{2n}$.\\
But then the composite

\begin{center}
$\alpha = (id_{P} \oplus {\beta}^{-1})(\tau \oplus id_{R^{2u}} \oplus id_{P_{2}})({\zeta}^{-1} \oplus id_{P_{2}})({\tau}^{-1} \oplus id_{R^{2u}} \oplus id_{P_{2}})(id_{P} \oplus \beta)$
\end{center}

is an isometry from $\chi_{1} \perp \psi_{2n}$ to $\chi_{2} \perp \psi_{2n}$ and clearly has determinant $1$. This proves the lemma.
\end{proof}

\subsection{Motivic homotopy theory}\label{Motivic homotopy theory}\label{2.4}

In this section, we give a brief introduction to motivic homotopy theory. The main use of motivic homotopy theory in this paper will take place in the proof of Theorem \ref{T3.17}, which concerns symplectic orbits of unimodular rows; we will mainly use the identification $Um_{n} (R)/E_{n} (R) = [Spec (R), \mathbb{A}^{n}\setminus 0]_{\mathbb{A}^{1}}$ for $n \geq 3$ and any smooth affine algebra $R$ over a base field $k$ as well as the theory of fiber sequences and Suslin matrices, which we will explain in this section.\\
So let $k$ be a field and let $Sm_{k}$ be the category of smooth separated schemes of finite type over $k$. Then let $Spc_{k} = \Delta^{op} Shv_{Nis} (Sm_{k})$ (resp. $Spc_{k,\bullet}$) be the category of (pointed) simplicial Nisnevich sheaves on $Sm_{k}$. We write $\mathcal{H}_{s} (k)$ (resp. $\mathcal{H}_{s,\bullet} (k)$) for the (pointed) Nisnevich simplicial homotopy category which can be obtained as the homotopy category of the injective local model structure on $Spc_{k}$ (resp. $Spc_{k,\bullet}$). Furthermore, we write $\mathcal{H} (k)$ (resp. $\mathcal{H}_{\bullet} (k)$) for the $\mathbb{A}^{1}$-homotopy category, which can be obtained as a Bousfield localization of $\mathcal{H}_{s} (k)$ (resp. $\mathcal{H}_{s,\bullet} (k)$); see e.g. \cite{MV} for more details. Objects of $Spc_{k}$ (resp. $Spc_{k,\bullet}$) will be referred to as (pointed) spaces.\\
For two spaces $\mathcal{X}$ and $\mathcal{Y}$, we denote by $[\mathcal{X},\mathcal{Y}]_{\mathbb{A}^{1}} = Hom_{\mathcal{H} (k)} (\mathcal{X},\mathcal{Y})$ the set of morphisms from $\mathcal{X}$ to $\mathcal{Y}$ in $\mathcal{H} (k)$; similarly, for two pointed spaces $(\mathcal{X},x)$ and $(\mathcal{Y},y)$, we denote by $[(\mathcal{X},x), (\mathcal{Y},y)]_{\mathbb{A}^{1},\bullet} = Hom_{\mathcal{H}_{\bullet} (k)} ((\mathcal{X},x), (\mathcal{Y},y))$ the set of morphisms from $(\mathcal{X},x)$ to $(\mathcal{Y},y)$ in $\mathcal{H}_{\bullet} (k)$. Sometimes we will omit the basepoints from the notation.\\
Just as in classical topology, there are simplicial suspension and loop space functors $\Sigma_{s},\Omega_{s}: Spc_{k, \bullet} \rightarrow Spc_{k, \bullet}$, which form an adjoint Quillen pair of functors. The right-derived functor of $\Omega_{s}$ will be denoted $\mathcal{R}\Omega_{s}$. We denote by $\Sigma_{s}^{n}$ and $\Omega_{s}^{n}$ the iterated suspension and loop space functors for any $n \in \mathbb{N}$. For any pointed space $(\mathcal{X},x)$, its simplicial suspension $\Sigma_{s} (\mathcal{X},x) = S^{1} \wedge (\mathcal{X},x)$ has the structure of an $h$-cogroup in $\mathcal{H}_{\bullet} (k)$ (cp. \cite[Definition 2.2.7]{A} or \cite[Section 6.1]{Ho}); in particular, for any pointed space $(\mathcal{Y},y)$, there is a natural group structure on the set $[\Sigma_{s}(\mathcal{X},x), (\mathcal{Y},y)]_{\mathbb{A}^{1}, \bullet}$ induced by the $h$-cogroup structure of $\Sigma_{s} (\mathcal{X},x)$. For any pointed space $(\mathcal{Y},y)$, the space $\mathcal{R}\Omega_{s}(\mathcal{Y},y)$ has the structure of an $h$-group (or grouplike $H$-space in some literature) in $\mathcal{H}_{\bullet} (k)$ and hence the set $[(\mathcal{X},x), \mathcal{R}\Omega_{s}(\mathcal{Y},y)]_{\mathbb{A}^{1},\bullet}$ has a natural group structure for any pointed space $(\mathcal{X},x)$ induced by the $h$-group structure of $\mathcal{R}\Omega_{s}(\mathcal{Y},y)$.\\
Furthermore, the functor $Spc_{k} \rightarrow Spc_{k,\bullet}, \mathcal{X} \mapsto X_{+} = X \sqcup \ast$ and the forgetful functor $Spc_{k,\bullet} \rightarrow Spc_{k}$ form a Quillen pair which will be tacitly used in some proofs of this paper in order to force some spaces to have a basepoint. If $(\mathcal{X},x)$ is a pointed space and $i \geq 0$, then we let $\pi_{i}^{\mathbb{A}^{1}} (\mathcal{X},x)$ be the Nisnevich sheaf associated with the presheaf $U \mapsto [\Sigma_{s}^{i} U_{+}, (\mathcal{X},x)]_{\mathbb{A}^{1},\bullet}$.\\
Recall that in any pointed model category, i.e. in any model category whose initial and terminal object are isomorphic, there exists the notion of fiber sequences $(\mathcal{F},f) \hookrightarrow (\mathcal{E},e) \rightarrow (\mathcal{B},b)$ (cp. \cite[Section 6.2]{Ho}). Since $Spc_{k,\bullet}$ is a pointed model category with its $\mathbb{A}^{1}$-model structure, this notion in particular exists in motivic homotopy theory. Analogous to the situation in classical topology, such fiber sequences give rise to long exact sequences of groups and pointed sets of the form

\begin{center}
$...\rightarrow  [\mathcal{X}, \mathcal{R}\Omega_{s}(\mathcal{B},b)]_{\mathbb{A}^{1},\bullet} \rightarrow [\mathcal{X}, (\mathcal{F},f)]_{\mathbb{A}^{1},\bullet} \rightarrow [\mathcal{X}, (\mathcal{E},e)]_{\mathbb{A}^{1},\bullet} \rightarrow [\mathcal{X}, (\mathcal{B},b)]_{\mathbb{A}^{1},\bullet}$
\end{center}

for any pointed space $\mathcal{X}$ (see \cite[Section 6.5]{Ho}). For the purpose of this paper, we simply state the existence of the following $\mathbb{A}^{1}$-fiber sequences which follows from \cite[Section 5]{W}:

\begin{Thm}
Let $(X,x)$ be a pointed $k$-scheme. If $G = Sp_{2n},SL_{n},GL_{n}$ and $P \rightarrow X$ is a $G$-torsor, then there is an $\mathbb{A}^{1}$-fiber sequence of the form

\begin{center}
$G \hookrightarrow P \rightarrow X$.
\end{center}
\end{Thm}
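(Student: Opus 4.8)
The plan is to exploit the classical fact that each $G = Sp_{2n}, SL_{n}, GL_{n}$ is a \emph{special} group in the sense of Serre: every $G$-torsor over a scheme is Zariski-locally, hence Nisnevich-locally, trivial. I would proceed in two stages. First I would produce a Nisnevich-local (simplicial) fiber sequence $G \hookrightarrow P \to X$, where the fiber is the actual fiber of $P \to X$ over the basepoint $x \colon Spec(k) \to X$. Then — and this is where the real content lies — I would upgrade this simplicial fiber sequence to an $\mathbb{A}^{1}$-fiber sequence by showing that it is preserved under $\mathbb{A}^{1}$-localization.

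For the first stage, I would work in the injective local model structure on $Spc_{k,\bullet}$ whose homotopy category is $\mathcal{H}_{s,\bullet}(k)$. Since $P \to X$ is Nisnevich-locally trivial with fiber $G$, taking a Nisnevich-local fibrant replacement of the map $P \to X$ turns it into a fibration whose homotopy fiber over $x$ is simplicially equivalent to $G$; equivalently, the evident square with corners $G$, $P$, $\ast$, $X$ is homotopy cartesian in $\mathcal{H}_{s,\bullet}(k)$. This yields a pointed fiber sequence $G \hookrightarrow P \to X$ in the simplicial homotopy category, together with its long exact sequence of simplicial homotopy sheaves.

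For the second stage I would invoke the affine-representability of $G$-torsors for these special groups, namely the $\mathbb{A}^{1}$-invariance of the functor $U \mapsto H^{1}_{Nis}(U, G)$ on smooth affine schemes and the resulting identification of $H^{1}_{Nis}(-, G)$ with $\mathbb{A}^{1}$-homotopy classes of maps into a classifying space $BG$. Concretely, the special group $G$ admits a classifying space $BG$ with a universal torsor $EG \to BG$, where $EG$ is simplicially contractible and $G \hookrightarrow EG \to BG$ is a simplicial fiber sequence. The $\mathbb{A}^{1}$-invariance of $H^{1}_{Nis}(-, G)$ guarantees that the $\mathbb{A}^{1}$-localization of $BG$ still classifies $G$-torsors and that $G \hookrightarrow EG \to BG$ is in fact an $\mathbb{A}^{1}$-fiber sequence. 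Our torsor $P \to X$ is classified by a map $X \to BG$, and pulling back the universal $\mathbb{A}^{1}$-fiber sequence along this map — using that $\mathbb{A}^{1}$-fiber sequences are stable under homotopy base change — produces the desired $\mathbb{A}^{1}$-fiber sequence $G \hookrightarrow P \to X$.

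The main obstacle is exactly the passage from the simplicial to the $\mathbb{A}^{1}$-local fiber sequence, since $\mathbb{A}^{1}$-localization does not preserve fiber sequences in general: one must verify that the homotopy fiber of $L_{\mathbb{A}^{1}}(P) \to L_{\mathbb{A}^{1}}(X)$ is again $L_{\mathbb{A}^{1}}(G)$. This does not follow from any formal property of fibrations; it rests on the $\mathbb{A}^{1}$-invariance and representability of $G$-torsors on smooth affine schemes for special groups, which is the genuinely deep input. Once that representability statement is available, the remaining steps are a routine manipulation of fiber sequences and homotopy base change in $\mathcal{H}_{\bullet}(k)$.
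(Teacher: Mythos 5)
You have reconstructed the right overall strategy, and it is worth saying up front that the paper itself contains no proof of this theorem: it is stated with a reference to \cite[Section 5]{W}, the modern treatment being \cite{AHW} (which is also in the paper's bibliography). Your first stage is fine: $SL_{n}$, $GL_{n}$ and $Sp_{2n}$ are special groups in Serre's sense, so $P \rightarrow X$ is Zariski- and hence Nisnevich-locally trivial, and since weak equivalences and homotopy pullbacks of simplicial Nisnevich sheaves in the local model structure can be detected stalkwise, the square with corners $G$, $P$, $\ast$, $X$ is homotopy cartesian in $\mathcal{H}_{s,\bullet}(k)$. You have also put your finger on the correct deep input for the second stage, namely the $\mathbb{A}^{1}$-invariance of $H^{1}_{Nis}(-,G)$ on smooth affine schemes.

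The gap is in your closing move, the reduction of the general case to the universal one by homotopy base change. Stability of $\mathbb{A}^{1}$-fiber sequences under homotopy pullback does give an $\mathbb{A}^{1}$-fiber sequence over $X$, but its total space is the $\mathbb{A}^{1}$-\emph{homotopy pullback} of $EG \rightarrow BG$ along the classifying map $X \rightarrow BG$, and you still need to identify that space, $\mathbb{A}^{1}$-weakly equivalently, with $P$. Now $P$ is the strict pullback, which computes the \emph{simplicial} homotopy pullback because $EG \rightarrow BG$ is a Nisnevich-local fibration; but $EG \rightarrow BG$ is not a fibration in the $\mathbb{A}^{1}$-local model structure, so there is no formal reason for $P$ to compute the $\mathbb{A}^{1}$-homotopy pullback. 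Equivalently, you need the classifying square to remain homotopy cartesian after $\mathbb{A}^{1}$-localization, and that is exactly the non-formal statement at stake --- the failure of $\mathbb{A}^{1}$-localization to preserve homotopy pullbacks, which you yourself flag as the crux. Knowing that $G \rightarrow EG \rightarrow BG$ is an $\mathbb{A}^{1}$-fiber sequence is essentially a statement about $\mathcal{R}\Omega_{s}$ of the $\mathbb{A}^{1}$-localization of $BG$; it does not formally imply the statement for an arbitrary torsor. (Note also that the representability you invoke, $[X,BG]_{\mathbb{A}^{1}} \cong H^{1}_{Nis}(X,G)$, is only valid for smooth \emph{affine} $X$, which further limits a purely classifying-space argument.) This is precisely why \cite{W} and \cite{AHW} prove the fiber-sequence statement directly for an arbitrary Nisnevich-locally trivial torsor $P \rightarrow X$, feeding the $\mathbb{A}^{1}$-invariance of the torsor functor through the smooth affine schemes mapping to $X$ itself, rather than only through the universal case. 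If instead you read ``the representability statement'' as the full theorem of \cite{AHW} --- that every such torsor yields an $\mathbb{A}^{1}$-fiber sequence --- then nothing remains to be proved, and your argument, like the paper's, amounts to a citation.
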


As special cases of this theorem, we obtain $\mathbb{A}^{1}$-fiber sequences of the form

\begin{center}
$SL_{n} \hookrightarrow SL_{n+1} \rightarrow SL_{n+1}/SL_{n}$,\\
$Sp_{2n} \hookrightarrow SL_{2n} \rightarrow SL_{2n}/Sp_{2n}$,\\
$Sp_{2n} \hookrightarrow GL_{2n} \rightarrow GL_{2n}/Sp_{2n}$.
\end{center}

Let us describe the quotients $SL_{n}/SL_{n-1}$: For $n \geq 1$, the projection on the first column induces a morphism $SL_{n}/SL_{n-1} \rightarrow \mathbb{A}^{n}\setminus 0$ which is Zariski locally trivial with fibers isomorphic to $\mathbb{A}^{n-1}$ and hence an $\mathbb{A}^{1}$-weak equivalence.\\
For all $n \geq 1$, let $S_{2n-1} = k[x_{1},...,x_{n},y_{1},...,y_{n}]/\langle \sum_{i=1}^{n} x_{i}y_{i} - 1 \rangle$ and then let $Q_{2n-1} = Spec (S_{2n-1})$ be the smooth affine quadric hypersurfaces in $\mathbb{A}^{2n}$. The projection on the coefficients $x_{1},...,x_{n}$ induces a morphism of schemes $p_{2n-1}:Q_{2n-1} \rightarrow \mathbb{A}^{n}\setminus 0$ which is locally trivial with fibers isomorphic to $\mathbb{A}^{n-1}$ and hence an $\mathbb{A}^{1}$-weak equivalence. Thus, we have $\mathbb{A}^{1}$-weak equivalences

\begin{center}
$SL_{n}/SL_{n-1} \simeq_{\mathbb{A}^{1}} \mathbb{A}^{n}\setminus 0 \simeq_{\mathbb{A}^{1}} Q_{2n-1}$
\end{center}

for all $n \geq 1$. Note that these $\mathbb{A}^{1}$-weak equivalences are all pointed, if we equip $SL_{n}/SL_{n-1}$ with the identity matrix, $\mathbb{A}^{n}\setminus 0$ with $(1,0,..,0)$ and $Q_{2n-1}$ with $(1,0,..,0,1,0,..,0)$ as basepoints.\\
If $R$ is a smooth affine $k$-algebra and $X = Spec(R)$, then it is well-known that

\begin{center}
$\mathit{Um}_{n} (R) \cong Hom_{Sm_{k}} (X, \mathbb{A}^{n}\setminus 0)$
\end{center}

and

\begin{center}
$\{(a,b)|a,b \in \mathit{Um}_{n} (R), a b^{t} = 1\} = Hom_{Sm_{k}} (X, Q_{2n-1})$.
\end{center}

If $n \geq 3$, it follows from \cite[Remark 8.10]{Mo} and \cite[Theorem 2.1]{F} that

\begin{center}
$\mathit{Um}_{n} (R)/E_{n} (R) \cong [X, \mathbb{A}^{n}\setminus 0]_{\mathbb{A}^{1}}$.
\end{center}

In particular, if $m \geq 1$, then the orbit space $\mathit{Um}_{n} (S_{2m-1})/E_{n} (S_{2m-1})$ is just given by

\begin{center}
$[Q_{2m-1}, \mathbb{A}^{n}\setminus 0]_{\mathbb{A}^{1}} \cong [\mathbb{A}^{m}\setminus 0, \mathbb{A}^{n}\setminus 0]_{\mathbb{A}^{1}}$.
\end{center}

It is well-known that $\mathbb{A}^{m}\setminus 0$ is isomorphic to $\Sigma_{s}^{m-1} \mathbb{G}_{m}^{\wedge m}$ in $\mathcal{H}_{\bullet} (k)$ for all $m \geq 1$; therefore $\mathbb{A}^{m}\setminus 0$ inherits the structure of an $h$-cogroup in $\mathcal{H}_{\bullet} (k)$ for $m \geq 2$ (cp. \cite[Definition 2.2.7]{A} or \cite[Section 6.1]{Ho}). In particular, the orbit space $\mathit{Um}_{n} (S_{2m-1})/E_{n} (S_{2m-1})$ has a natural group structure for $m \geq 2$, $n \geq 3$.\\\\
Now let $R$ be a commutative ring, $n \geq 1$ and $a = (a_{1},...,a_{n}), b = (b_{1},...,b_{n})$ be row vectors of length $n$. In \cite{S2} Suslin inductively constructed matrices $\alpha_{n} (a,b)$ of size $2^{n-1}$ called Suslin matrices for all $n \geq 1$: For $n=1$, one simply sets $\alpha_{1} (a,b) = (a_{1})$; for $n \geq 2$, one sets $a'=(a_{2},...,a_{n}),b'=(b_{2},...,b_{n})$ and defines

\begin{center}
$\alpha_{n} (a,b) = \begin{pmatrix}
a_{1} {id}_{2^{n-2}} & \alpha_{n-1} (a',b') \\
-{\alpha_{n-1} (b',a')}^{t} & b_{1} {id}_{2^{n-2}}
\end{pmatrix}.$
\end{center}

In \cite[Lemma 5.1]{S2} Suslin proved that $\det (\alpha_{n} (a,b)) = {(a b^{t})}^{2^{n-2}}$ if $n \geq 2$; in particular, if $a = (a_{1},...,a_{n})$ is a unimodular row of length $n$ and $b=(b_{1},...,b_{n})$ defines a section of $a$, i.e. $a b^{t}=\sum_{i=1}^{n} a_{i} b_{i} = 1$, then $\alpha_{n} (a,b) \in SL_{2^{n-1}} (R)$.\\
Suslin originally introduced these matrices in order to show that for any unimodular row $a = (a_{1},a_{2},a_{3},...,a_{n})$ of length $n \geq 3$, the row of the form $a' = (a_{1},a_{2},a_{3},...,a_{n}^{(n-1)!})$ is completable to an invertible matrix. In fact, he proved that for any $a$ with section $b$ there exists an invertible $n \times n$-matrix $\beta (a,b)$ whose first row is $a'$ such that the classes of $\beta (a,b)$ and $\alpha_{n} (a,b)$ in $K_{1} (R)$ coincide (cp. \cite[Proposition 2.2 and Corollary 2.5]{S3}).\\
As explained in \cite{AF4}, one can in fact interpret Suslin's construction as a morphism of schemes: We let $Q_{2n-1} = Spec (k[x_{1},...,x_{n},y_{1},...,y_{n}]/\langle \sum_{i=1}^{n}x_{i}y_{i} -1 \rangle)$ as above. Then there exists a morphism $\alpha_{n}: Q_{2n-1} \rightarrow SL_{2^{n-1}}$ induced by $\alpha_{n} (x,y)$, where $x=(x_{1},...,x_{n})$ and $y=(y_{1},...,y_{n})$; if we equip $Q_{2n-1}$ with $(1,0,..,0,1,0,...,0)$ and $SL_{2^{n-1}}$ with the identity as basepoints, this morphism is pointed. Composing with the canonical map $SL_{2^{n-1}} \rightarrow SL$, we obtain a morphism $Q_{2n-1} \rightarrow SL$ which we also denote by $\alpha_{n}$. If $R$ is a smooth affine algebra over $k$, then the induced morphism

\begin{center}
$\mathit{Um}_{n} (R)/E_{n} (R) \cong [Spec(R), Q_{2n-1}]_{\mathbb{A}^{1}} \xrightarrow{{\alpha_{n}}_{\ast}} [Spec(R), SL]_{\mathbb{A}^{1}} \cong SK_{1} (R)$
\end{center}

takes the class of any $a \in \mathit{Um}_{n} (R)$ to the class of $\alpha_{n} (a,b)$ in $SK_{1} (R)$, where $b$ is any section of $a$.

\subsection{Grothendieck-Witt groups}\label{Grothendieck-Witt groups}\label{2.5}

In this section we first recall some basics about higher Grothendieck-Witt groups, which are a modern version of Hermitian K-theory. The general references of the modern theory are \cite{MS1}, \cite{MS2} and \cite{MS3}. At the end of this section, we will then use the Gersten-Grothendieck-Witt spectral sequence in order to give cohomological criteria for the $2$-divisibility of the groups $W_E (R)$ and $W_{SL} (R)$ whenever $R$ is a smooth affine algebra of dimension $4$ over an algebraically closed field $k$ with $char(k) \neq 2$.\\
Now let $X$ be a scheme with $\frac{1}{2} \in \Gamma (X, \mathcal{O}_{X})$ and let $\mathcal{L}$ be a line bundle on $X$. Then we consider the category $C^{b} (X)$ of bounded complexes of locally free coherent $\mathcal{O}_{X}$-modules. The category $C^{b} (X)$ inherits a natural structure of an exact category from the category of locally free coherent $\mathcal{O}_{X}$-modules by declaring $C'_{\bullet} \rightarrow C_{\bullet} \rightarrow C''_{\bullet}$ to be exact if and only if $C'_{n} \rightarrow C_{n} \rightarrow C''_{n}$ is exact for all $n$. The duality $Hom_{\mathcal{O}_{X}} (-, \mathcal{L})$ induces a duality $\#_{\mathcal{L}}$ on $C^{b} (X)$ in the sense of \cite[\S 2.3]{MS2} and the isomorphism $id \rightarrow Hom_{\mathcal{O}_{X}} (Hom_{\mathcal{O}_{X}} (-, \mathcal{L}), \mathcal{L})$ for locally free coherent $\mathcal{O}_{X}$-modules induces a natural isomorphism of functors $\varpi_{\mathcal{L}}: id \xrightarrow{\sim} \#_{\mathcal{L}}\#_{\mathcal{L}}$ on $C^{b} (X)$. Moreover, the translation functor $T: C^{b} (X) \rightarrow C^{b} (X)$ yields new dualities $\#_{\mathcal{L}}^{j} = T^{j} \#_{\mathcal{L}}$ and natural isomorphisms $\varpi_{\mathcal{L}}^{j} = {(-1)}^{j (j+1)/2} \varpi_{\mathcal{L}}$. We say that a morphism in $C^{b} (X)$ is a weak equivalence if and only if it is a quasi-isomorphism and we denote by $qis$ the class of quasi-isomorphisms. For all $j$, the quadruple $(C^{b} (X), qis, \#_{\mathcal{L}}^{j}, \varpi_{\mathcal{L}}^{j})$ is an exact category with weak equivalences and strong duality (cp. \cite[\S 2.3]{MS2}).\\
Following \cite{MS2}, one can associate a Grothendieck-Witt space $\mathcal{GW}$ to any exact category with weak equivalences and strong duality. The (higher) Grothendieck-Witt groups are then defined to be its homotopy groups:
 
\begin{Def}
For any $i \geq 0$, we let $\mathcal{GW} (C^{b} (X), qis, \#_{\mathcal{L}}^{j}, \varpi_{\mathcal{L}}^{j})$ denote the Grothendieck-Witt space associated to the quadruple $(C^{b} (X), qis, \#_{\mathcal{L}}^{j}, \varpi_{\mathcal{L}}^{j})$ as above. Then we define ${GW}_{i}^{j} (X, \mathcal{L}) = \pi_{i} \mathcal{GW} (C^{b} (X), qis, \#_{\mathcal{L}}^{j}, \varpi_{\mathcal{L}}^{j})$. If $\mathcal{L} = \mathcal{O}_{X}$, we also denote $GW_{i}^{j} (X, \mathcal{O}_{X})$ by $GW_{i}^{j} (X)$. Furthermore, if $X = Spec (R)$, we simply denote $GW_{i}^{j} (X, \mathcal{L})$ or $GW_{i}^{j} (X)$ by $GW_{i}^{j} (R, \mathcal{L})$ or $GW_{i}^{j} (R)$ respectively.
\end{Def}
 
The groups $GW_{i}^{j} (X, \mathcal{L})$ are $4$-periodic in $j$. If we let $X = Spec(R)$ be an affine scheme, the groups $GW_{i}^{j} (X)$ coincide with Hermitian K-theory and U-theory as defined by Karoubi (cp. \cite{MK1} and \cite{MK2}), because $\frac{1}{2} \in \Gamma (X, \mathcal{O}_{X})$ by our assumption (cp. \cite[Remark 4.13]{MS1} and \cite[Theorems 6.1-2]{MS3}).\\
In particular, we have isomorphisms $K_{i}O (R) = GW_{i}^{0} (R)$, $_{-1}U_{i} (R) = GW_{i}^{1} (R)$, $K_{i}Sp (R) = GW_{i}^{2} (R)$ and $U_{i} (R) = GW_{i}^{3} (R)$.\\
For all $i,j \geq 0$, there are forgetful homomorphisms $f_{i,j}: GW_{i}^{j} (X) \rightarrow K_{i} (X)$, hyperbolic homomorphisms $H_{i,j}: K_{i} (X) \rightarrow GW_{i}^{j} (X)$ and also boundary homomorphisms $\eta: GW_{i+1}^{j+1} (X) \rightarrow GW_{i}^{j} (X)$ which are connected by means of the exact sequence called Karoubi periodicity sequence of the form

\begin{center}
$K_{i} (X) \xrightarrow{H_{i,j}} GW_{i}^{j} (X) \xrightarrow{\eta} GW_{i-1}^{j-1} (X) \xrightarrow{f_{i-1,j-1}} K_{i-1} (X) \xrightarrow{H_{i-1,j}} GW_{i-1}^{j} (X)$.
\end{center}

Now let $X = Spec (R)$ be affine. The group of our interest is $GW_{1}^{3} (X) = U_{1} (R)$. As a matter of fact, it is proved in \cite[Theorem 4.4]{FRS} that there is a natural isomorphism $GW_{1}^{3} (R) \cong W'_E (R)$. One of the main tools to compute the group $GW_{1}^{3} (X)$ is the Karoubi periodicity sequence mentioned above. By means of the identification $GW_{1}^{3} (X) \cong W'_E (R)$, this yields an exact sequence of the form

\begin{center}
$K_{1}{Sp} (R) \xrightarrow{f_{1,2}} K_{1} (R) \xrightarrow{H_{1,3}} W'_{E} (R) \xrightarrow{\eta} K_{0}{Sp} (R) \xrightarrow{f_{0,2}} K_{0} (R)$.
\end{center}

The homomorphisms in this sequence can be explicitly described as follows: The forgetful homomorphisms $K_{1}{Sp} (R) \xrightarrow{f_{1,2}} K_{1} (R)$ and $K_{0}{Sp} (R) \xrightarrow{f_{0,2}} K_{0} (R)$ are induced by the obvious inclusions $Sp_{2n} (R) \rightarrow GL_{2n} (R)$ and the assignment $(P, \varphi) \mapsto P$ for any skew-symmetric space $(P, \varphi)$ respectively. Moreover, the hyperbolic map $K_{1} (R) \xrightarrow{H_{1,3}} W'_{E} (R)$ is induced by the assignment $M \mapsto M^{t} \psi_{2n} M$ for all $M \in GL_{2n} (R)$. Finally, the homomorphism $W'_{E} (R) \xrightarrow{\eta} K_{0}{Sp} (R)$ is induced by the assignment $M \mapsto [R^{2n}, M] - [R^{2n},\psi_{2n}]$ for all $M \in A_{2n} (R)$.\\
As the image of $K_{1}{Sp} (R)$ under $f_{1,2}$ in $K_{1} (R)$ lies in $SK_{1} (R)$, one can rewrite the sequence above as

\begin{center}
$K_{1}{Sp} (R) \xrightarrow{f_{1,2}} SK_{1} (R) \xrightarrow{H_{1,3}} W_{E} (R) \xrightarrow{\eta} K_{0}{Sp} (R) \xrightarrow{f_{0,2}} K_{0} (R)$.
\end{center}

If we restrict ourselves to smooth $k$-schemes over a perfect field $k$ of $char (k) \neq 2$, then it is known (cp. \cite[Theorem 3.1]{JH}) that Grothendieck-Witt groups are representable in the (pointed) $\mathbb{A}^{1}$-homotopy category $\mathcal{H}_{\bullet} (k)$ as defined by Morel and Voevodsky. As a matter of fact, if we let $X$ be a smooth $k$-scheme over a perfect field $k$, it is shown that there are pointed spaces $\mathcal{GW}^{j}$ such that
 
\begin{center}
$[\Sigma_{s}^{i} X_{+}, \mathcal{GW}^{j}]_{\mathbb{A}^{1}, \bullet} \cong GW_{i}^{j} (X)$.
\end{center}

Let us make these spaces more explicit: We consider for all $n \in \mathbb{N}$ the closed embeddings $GL_{n} \rightarrow O_{2n}$ and $GL_{n} \rightarrow Sp_{2n}$ given by

\begin{center}
$M \mapsto
\begin{pmatrix}
M & 0 \\
0 & {(M^{-1})}^{t}
\end{pmatrix}
$.
\end{center}

These embeddings are compatible with the standard stabilization embeddings $GL_{n} \rightarrow GL_{n+1}$, $O_{2n} \rightarrow O_{2n+2}$ and $Sp_{2n} \rightarrow Sp_{2n+2}$. Taking direct limits over all $n$ with respect to the induced maps $O_{2n}/GL_{n} \rightarrow O_{2n+2}/GL_{n+1}$ and $Sp_{2n}/GL_{n} \rightarrow Sp_{2n+2}/GL_{n+1}$, we obtain spaces $O/GL$ and $Sp/GL$. Similarly, the natural inclusions $Sp_{2n} \rightarrow GL_{2n}$ are compatible with the standard stabilization embeddings and we obtain a space $GL/Sp = colim_{n}\; GL_{2n}/Sp_{2n}$. As proven in \cite[Theorems 8.2 and 8.4]{ST}, there are canonical $\mathbb{A}^{1}$-weak equivalences
 
\begin{center}
\begin{equation*}
   \mathcal{GW}^{j} \cong
   \begin{cases}
     \mathbb{Z} \times OGr & \text{if } j \equiv 0 \text{ mod } 4 \\
     Sp/GL & \text{if } j \equiv 1 \text{ mod } 4 \\
     \mathbb{Z} \times HGr & \text{if } j \equiv 2 \text{ mod } 4 \\
     O/GL & \text{if } j \equiv 3 \text{ mod } 4
   \end{cases}
\end{equation*}
\end{center}
 
and
 
\begin{center}
$\mathcal{R}\Omega_{s}^{1} O/GL \cong GL/Sp$,
\end{center}
 
where $OGr$ is an "infinite orthogonal Grassmannian" and $HGr$ is an "infinite symplectic Grassmannian". As a consequence of all this, there is an isomorphism $[X, GL/Sp]_{\mathbb{A}^{1}} \cong GW_{1}^{3} (X)$. If we let $A_{2n}$ denote the scheme of skew-symmetric invertible $2n \times 2n$-matrices for all $n \in \mathbb{N}$, then it is argued in \cite{AF4} that the morphisms of schemes $GL_{2n} \rightarrow A_{2n}$, $M \mapsto M^{t} \psi_{2n} M$ induce an isomorphism $GL/Sp \cong A$ of Nisnevich sheaves, where $A = colim_{n} A_{2n}$ (the transition maps are given by adding $\psi_{2}$). Altogether, we obtain a bijection $[X, A]_{\mathbb{A}^{1}} \cong GW_{1}^{3} (X)$; if $X = Spec (R)$ is affine, then $[X, A]_{\mathbb{A}^{1}}$ is precisely $A (R) /{\sim_{E(R)}} = W'_E (R)$.\\
Similarly, for any smooth $k$-scheme $X$, we let $GW_{1,red}^{3} (X) = [X, SL/Sp]_{\mathbb{A}^{1}}$ be the reduced Grothendieck-Witt group. If $X = Spec (R)$ is affine, then it follows as in the previous paragraph that $GW_{1,red}^{3} (X) = W_E (R)$.\\
It follows from these identifications that the $\mathbb{A}^{1}$-fiber sequences 

\begin{center}
$Sp \rightarrow GL \rightarrow GL/Sp$\\
$Sp \rightarrow SL \rightarrow SL/Sp$
\end{center}

induce the homomorphisms

\begin{center}
$K_{1}{Sp} (R) \xrightarrow{f} K_{1} (R) \xrightarrow{H} W'_E (R)$\\
$K_{1}{Sp} (R) \xrightarrow{f} SK_{1} (R) \xrightarrow{H} W_E (R)$
\end{center}

in the Karoubi periodicity sequence above.\\\\
We now introduce Grothendieck-Witt sheaves and study their cohomology. This will give cohomological obstructions to the $2$-divisibility of $W_E (R)$ and $W_{SL} (R)$ for any smooth affine fourfold over an algebraically closed field with characteristic $\neq 2$.\\
For this, we first fix a perfect base field $k$ with $char(k) \neq 2$. Recall that we have defined $\mathbb{A}^{1}$-homotopy sheaves $\pi_{i}^{\mathbb{A}^{1}} (\mathcal{X},x)$ for any pointed space $(\mathcal{X},x) \in Spc_{k,\bullet}$. As a special case, we define Grothendieck-Witt sheaves as follows:

\begin{Def}
For any $i \geq 0$, we set $\textbf{\textsc{GW}}_{i}^{j} = \pi_{i}^{\mathbb{A}^{1}} (\mathcal{GW}^{j})$.
\end{Def}

Now let $X = Spec (R)$ be a smooth affine $k$-scheme. The Karoubi periodicity sequence induces an exact sequence of sheaves

\begin{center}
$\textbf{K}_{4}^{Q} \xrightarrow{H_{4,3}} \textbf{GW}_{4}^{3} \xrightarrow{\eta} \textbf{GW}_{3}^{2} \xrightarrow{f_{3,2}} \textbf{K}_{3}^{Q}$,
\end{center}

where $\textbf{K}_{i}^{Q}$ denotes the Quillen $K$-theory sheaves for $i=3,4$. We denote by $\textbf{A}$ the image of $H_{4,3}$ and by $\textbf{B}$ the image of $\eta$ and obtain a short exact sequence

\begin{center}
$0 \rightarrow \textbf{A} \rightarrow \textbf{GW}_{4}^{3} \rightarrow \textbf{B} \rightarrow 0$
\end{center}

of sheaves. It follows from \cite[Lemma 4.11]{AF2} and from the computations in \cite[Section 3.6]{AF3} that the associated exact sequence of cohomology groups yields an exact sequence of the form

\begin{center}
$H^{3} (X, \textbf{K}_{4}^{Q}/2) \rightarrow H^{3} (X, \textbf{GW}_{4}^{3}) \rightarrow Ch^{3} (X) \rightarrow Ch^{4} (X) \rightarrow H^{4} (X, \textbf{GW}_{4}^{3}) \rightarrow 0$,
\end{center}

where $Ch^{i} (X) = CH^{i} (X)/2$ for $i=3,4$. Since $CH^{4} (X)$ is uniquely $2$-divisible for any smooth affine fourfold $X$ over an algebraically closed field (cp. \cite{Sr}), we obtain:

\begin{Prop}
If $X = Spec(R)$ is a smooth affine fourfold over an algebraically closed field $k$ with $char(k) \neq 2$, then there is an exact sequence $H^{3} (X, \textbf{\textsc{K}}_{4}^{Q}/2) \rightarrow H^{3} (X, \textbf{\textsc{GW}}_{4}^{3}) \rightarrow Ch^{3} (X) \rightarrow 0$.
\end{Prop}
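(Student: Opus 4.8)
The plan is to read off the asserted three-term exact sequence directly from the five-term exact sequence of cohomology groups displayed immediately above the statement, once we know that its fourth term $Ch^{4}(X)$ vanishes. So the entire argument reduces to a single input about the top Chow group of a smooth affine fourfold over an algebraically closed field, and the rest is a formal truncation of an already-established long exact sequence.

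First I would recall the exact sequence
$$H^{3} (X, \textbf{K}_{4}^{Q}/2) \rightarrow H^{3} (X, \textbf{GW}_{4}^{3}) \rightarrow Ch^{3} (X) \rightarrow Ch^{4} (X) \rightarrow H^{4} (X, \textbf{GW}_{4}^{3}) \rightarrow 0,$$
which arises from the short exact sequence of sheaves $0 \to \textbf{A} \to \textbf{GW}_{4}^{3} \to \textbf{B} \to 0$ together with \cite[Lemma 4.11]{AF2} and the computations in \cite[Section 3.6]{AF3}, where by definition $Ch^{i}(X) = CH^{i}(X)/2$. The crucial step is then to show $Ch^{4}(X) = 0$: since $Ch^{4}(X) = CH^{4}(X)/2$, it suffices that multiplication by $2$ on $CH^{4}(X)$ be surjective, and this is precisely the unique $2$-divisibility of $CH^{4}(X)$ for a smooth affine fourfold over an algebraically closed field, established in \cite{Sr}. (Unique divisibility in fact gives both the absence of $2$-torsion and the vanishing $CH^{4}(X)/2 = 0$, but only the latter is needed here.)

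Finally I would truncate the five-term sequence at $Ch^{4}(X) = 0$. The map $Ch^{3}(X) \to Ch^{4}(X)$ is then the zero map, so exactness forces $H^{3}(X, \textbf{GW}_{4}^{3}) \to Ch^{3}(X)$ to be surjective, while exactness at $H^{3}(X, \textbf{GW}_{4}^{3})$ is inherited unchanged from the longer sequence. This yields exactly the claimed sequence $H^{3}(X, \textbf{K}_{4}^{Q}/2) \rightarrow H^{3}(X, \textbf{GW}_{4}^{3}) \rightarrow Ch^{3}(X) \rightarrow 0$. I do not expect any genuine obstacle internal to the argument; the only substantive ingredient is the cited vanishing of $CH^{4}(X)/2$, and everything else is bookkeeping within the long exact sequence.
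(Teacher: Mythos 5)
Your proposal is correct and matches the paper's own argument exactly: the paper derives the proposition by truncating the same five-term sequence (from \cite[Lemma 4.11]{AF2} and \cite[Section 3.6]{AF3}) after observing that $Ch^{4}(X) = CH^{4}(X)/2 = 0$, using Srinivas's result \cite{Sr} that $CH^{4}(X)$ is uniquely $2$-divisible for a smooth affine fourfold over an algebraically closed field. There is nothing missing; the truncation bookkeeping is exactly as you describe.
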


In particular, if $H^{3} (X, \textbf{K}_{4}^{Q}/2)$ and $Ch^{3} (X)$ are trivial, then also $H^{3} (X, \textbf{GW}_{4}^{3})$ is trivial. In fact, one can prove the following statement:

\begin{Prop}\label{P2.6}
If $X = Spec(R)$ is a smooth affine fourfold over an algebraically closed field $k$ with $char(k) \neq 2$, then $H^{3} (X, \textbf{\textsc{K}}_{4}^{Q})$ is $2$-divisible and $H^{3} (X, \textbf{\textsc{K}}_{4}^{Q}/2) = 0$. In particular, $H^{3} (X, \textbf{\textsc{GW}}_{4}^{3})$ is $2$-divisible if and only if $CH^{3} (X)$ is $2$-divisible.
\end{Prop}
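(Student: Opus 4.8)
The plan is to compute $H^{3}(X, \mathbf{K}_{4}^{Q})$ and $H^{3}(X, \mathbf{K}_{4}^{Q}/2)$ through the Gersten (Cousin) resolutions of these sheaves, which exist because they are homotopy invariant Nisnevich sheaves with transfers over the perfect field $k$. The Gersten complex of $\mathbf{K}_{4}^{Q}$ has $p$-th term $\bigoplus_{x \in X^{(p)}} K_{4-p}^{Q}(k(x))$, so that $H^{3}(X, \mathbf{K}_{4}^{Q})$ is the cohomology of
\[
\bigoplus_{x \in X^{(2)}} K_{2}^{Q}(k(x)) \to \bigoplus_{x \in X^{(3)}} k(x)^{\times} \to \bigoplus_{x \in X^{(4)}} \mathbb{Z};
\]
since contraction is exact and commutes with reduction modulo $2$, the Gersten complex of $\mathbf{K}_{4}^{Q}/2$ is the termwise mod-$2$ reduction of this one. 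Because $K_{i}^{Q} = K_{i}^{M}$ for $i \leq 2$ (Matsumoto's theorem in degree $2$), the Gersten complexes of $\mathbf{K}_{4}^{Q}/2$ and of the Milnor sheaf $\mathbf{K}_{4}^{M}/2$ coincide in the degrees $p = 2,3,4$ that compute $H^{3}$. Hence $H^{3}(X, \mathbf{K}_{4}^{Q}/2) = H^{3}(X, \mathbf{K}_{4}^{M}/2)$, which reduces the mod-$2$ vanishing to a computation with Milnor $K$-theory.

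The heart of the argument is the vanishing $H^{3}(X, \mathbf{K}_{4}^{M}/2) = 0$. By the norm residue isomorphism (the mod-$2$ Bloch--Kato conjecture, proved by Voevodsky) there is an identification $\mathbf{K}_{4}^{M}/2 \cong \mathcal{H}^{4}_{\mathrm{et}}(\mathbb{Z}/2)$ with the Nisnevich sheaf of unramified mod-$2$ étale cohomology, using $\mu_{2}^{\otimes j} \cong \mathbb{Z}/2$ over the algebraically closed field $k$. I then invoke the Bloch--Ogus coniveau spectral sequence $E_{2}^{p,q} = H^{p}(X, \mathcal{H}^{q}_{\mathrm{et}}(\mathbb{Z}/2)) \Rightarrow H^{p+q}_{\mathrm{et}}(X, \mathbb{Z}/2)$ and show that $E_{2}^{3,4} = H^{3}(X, \mathbf{K}_{4}^{M}/2)$ is killed. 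Three inputs are used: Artin vanishing, giving $H^{i}_{\mathrm{et}}(X, \mathbb{Z}/2) = 0$ for $i > 4$ on an affine fourfold over an algebraically closed field, so the abutment in total degree $7$ is zero; Grothendieck vanishing $H^{p}(X, -) = 0$ for $p > \dim X = 4$; and the bound $\mathrm{cd}_{2}(k(x)) \leq \mathrm{tr.deg}_{k}\, k(x)$ on residue fields, which forces $\mathcal{H}^{q}_{\mathrm{et}}(\mathbb{Z}/2) = 0$ for $q > 4$ since its generic stalk $H^{q}(k(X), \mathbb{Z}/2)$ vanishes. Every source or target of a differential at $E_{r}^{3,4}$, namely $E_{2}^{1,5}$, $E_{2}^{5,3}$, $E_{3}^{0,6}$ and all terms with $p > 4$, then vanishes by one of these three inputs, so $E_{2}^{3,4} = E_{\infty}^{3,4}$ is a subquotient of $H^{7}_{\mathrm{et}}(X, \mathbb{Z}/2) = 0$. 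This yields $H^{3}(X, \mathbf{K}_{4}^{Q}/2) = 0$.

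For the $2$-divisibility of $H^{3}(X, \mathbf{K}_{4}^{Q})$ I pass to the Bockstein long exact sequence attached to the derived reduction $\mathbf{K}_{4}^{Q} \xrightarrow{2} \mathbf{K}_{4}^{Q} \to \mathbf{K}_{4}^{Q} \otimes^{L} \mathbb{Z}/2$,
\[
\cdots \to H^{3}(X, \mathbf{K}_{4}^{Q}) \xrightarrow{2} H^{3}(X, \mathbf{K}_{4}^{Q}) \xrightarrow{\rho} \mathbb{H}^{3}(X, \mathbf{K}_{4}^{Q} \otimes^{L} \mathbb{Z}/2) \to \cdots,
\]
so it suffices to prove that the hypercohomology group $\mathbb{H}^{3}(X, \mathbf{K}_{4}^{Q} \otimes^{L} \mathbb{Z}/2)$ vanishes, for then $\rho = 0$ and multiplication by $2$ is surjective. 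The cohomology sheaves of $\mathbf{K}_{4}^{Q} \otimes^{L} \mathbb{Z}/2$ are $\mathbf{K}_{4}^{Q}/2$ in degree $0$ and the $2$-torsion subsheaf ${}_{2}\mathbf{K}_{4}^{Q}$ in degree $-1$, so the hypercohomology spectral sequence feeds $\mathbb{H}^{3}$ only from $H^{3}(X, \mathbf{K}_{4}^{Q}/2)$ and $H^{4}(X, {}_{2}\mathbf{K}_{4}^{Q})$. The former is zero by the previous step; for the latter, ${}_{2}\mathbf{K}_{4}^{Q}$ is again a sub-cycle-module whose Gersten complex over the closed points has top term $\bigoplus_{x \in X^{(4)}} {}_{2}K_{0}^{Q}(k(x)) = \bigoplus_{x} {}_{2}\mathbb{Z} = 0$, whence $H^{4}(X, {}_{2}\mathbf{K}_{4}^{Q}) = 0$. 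Thus $\mathbb{H}^{3} = 0$ and $H^{3}(X, \mathbf{K}_{4}^{Q})$ is $2$-divisible.

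Finally, the concluding equivalence is formal. Feeding $H^{3}(X, \mathbf{K}_{4}^{Q}/2) = 0$ into the exact sequence $H^{3}(X, \mathbf{K}_{4}^{Q}/2) \to H^{3}(X, \mathbf{GW}_{4}^{3}) \to Ch^{3}(X) \to 0$ of the preceding proposition gives an isomorphism $H^{3}(X, \mathbf{GW}_{4}^{3}) \cong Ch^{3}(X) = CH^{3}(X)/2$. As this group is annihilated by $2$, it is $2$-divisible if and only if it is trivial, i.e. if and only if $CH^{3}(X)/2 = 0$, which is exactly the $2$-divisibility of $CH^{3}(X)$. I expect the main obstacle to lie in the second paragraph: fixing the Bloch--Kato identification and running the coniveau spectral sequence with the correct cohomological-dimension bounds is the delicate part, after which the $2$-divisibility and the final equivalence follow essentially formally.
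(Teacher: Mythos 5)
Your proof is correct and follows essentially the same route as the paper: the key vanishing $H^{3}(X,\mathbf{K}_{4}^{Q}/2)=0$ is obtained, exactly as in the paper, from Voevodsky's resolution of the Milnor conjectures combined with the Bloch--Ogus spectral sequence, Artin vanishing for the affine fourfold $X$, and the cohomological-dimension bounds on residue fields, and the concluding equivalence with $CH^{3}(X)$ is the same formal argument. Your Bockstein/hypercohomology packaging of the $2$-divisibility step is merely a derived-category reformulation of the paper's two short exact sequences $0 \to \{2\}\mathbf{K}_{4}^{Q} \to \mathbf{K}_{4}^{Q} \to 2\mathbf{K}_{4}^{Q} \to 0$ and $0 \to 2\mathbf{K}_{4}^{Q} \to \mathbf{K}_{4}^{Q} \to \mathbf{K}_{4}^{Q}/2 \to 0$, resting on the identical two inputs: vanishing of $H^{3}$ of the mod-$2$ quotient sheaf and of $H^{4}$ of the $2$-torsion subsheaf, the latter because $K_{0}$ of a field is torsion-free.
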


\begin{proof}
We let $2\textbf{K}_{4}^{Q}$ be the image and $\{2\}\textbf{K}_{4}^{Q}$ be the kernel of the morphism $\textbf{K}_{4}^{Q} \rightarrow \textbf{K}_{4}^{Q}$ induced by multiplication by $2$. Then we consider the two short exact sequences of sheaves

\begin{center}
$0 \rightarrow \{2\}\textbf{K}_{4}^{Q} \rightarrow \textbf{K}_{4}^{Q} \rightarrow 2\textbf{K}_{4}^{Q} \rightarrow 0$
\end{center}

and

\begin{center}
$0 \rightarrow 2\textbf{K}_{4}^{Q} \rightarrow \textbf{K}_{4}^{Q} \rightarrow \textbf{K}_{4}^{Q}/2 \rightarrow 0$.
\end{center}

The Gersten resolutions of $\{2\}\textbf{K}_{4}^{Q}$ and $\textbf{K}_{4}^{Q}/2$ are flasque resolutions of these sheaves and can therefore be used in order to compute their cohomology.\\
Since $K_{0} (F) = \mathbb{Z}$ for any field $F$, we have $H^{4} (X, \{2\}\textbf{K}_{4}^{Q}) = 0$. It follows that the map $H^{3} (X, \textbf{K}_{4}^{Q}) \rightarrow H^{3} (X, 2\textbf{K}_{4}^{Q})$ is surjective. As the composite

\begin{center}
$H^{3} (X, \textbf{K}_{4}^{Q}) \rightarrow H^{3} (X, 2\textbf{K}_{4}^{Q}) \rightarrow H^{3} (X, \textbf{K}_{4}^{Q})$
\end{center}

is multiplication by $2$, it thus suffices to prove that $H^{3} (X, \textbf{K}_{4}^{Q}/2) = 0$.\\
For any $q,m \in \mathbb{N}$, we let $\mathcal{H}^{q} (m)$ be the sheaf associated to the presheaf

\begin{center}
$U \mapsto H^{q}_{\acute{e}t} (U, \mu_{2}^{\otimes m})$.
\end{center}

Recall that the Bloch-Ogus spectral sequence (cp. \cite{BO}) converges to the \'etale cohomology groups $H^{\ast}_{\acute{e}t} (X, \mu_{2}^{\otimes m})$ and its terms on the second page are $H^{p}_{Zar} (X,\mathcal{H}^{q}(m))$. These groups can be computed via the Gersten complex

\begin{center}
$H^{q} (k(X),\mu_{2}^{\otimes m}) \xrightarrow{d_{0}} \oplus_{x_{1} \in X^{(1)}} H^{q-1} (k(x_{1}),\mu_{2}^{\otimes m-1}) \xrightarrow{d_{1}} ...$.
\end{center}

By \cite[\S 4.2, Proposition 11]{S}, one has $c.d.(k(x_{p})) \leq 4-p$ for any $x_{p} \in X^{(p)}$. Therefore it follows that $H^{p}_{Zar} (X,\mathcal{H}^{q}(m)) = 0$ for all $q \geq 5$; consequently, $H^{3} (X, \mathcal{H}^{4}(m)) = H^{7}_{\acute{e}t} (X, \mu_{2}^{\otimes m}) = 0$, because $X$ is affine.\\
Since $H^{3} (X, \mathcal{H}^{4}(4)) = H^{3} (X,\textbf{K}_{4}^{Q}/2)$ because of Voevodsky's resolution of the Milnor conjectures, this proves the result.
\end{proof}

In the remainder of this section, we will use the Gersten-Grothendieck-Witt spectral sequence in order to compute $W_E (S_{2n-1})$ for all $n$ divisible by $4$ and in order to find cohomological obstructions for the $2$-divisibility of $W_E (R)$ when $R$ is a smooth affine $k$-algebra of dimension $4$ and $k$ is algebraically closed. The Gersten-Grothendieck-Witt spectral sequence (cp. \cite[Theorem 25]{FS2}) is the analogue in the theory of higher Grothendieck-Witt groups of the classical Brown-Gersten-Quillen spectral sequence in algebraic $K$-theory.\\
Recall that if $X$ is a smooth $k$-scheme of dimension $d$, then the Gersten-Grothendieck-Witt spectral sequence $E(3)$ associated to $X$ has terms of the form

\begin{center}
\begin{equation*}
   E(3)_{1}^{p,q} \cong
   \begin{cases}
     \bigoplus_{x_{p} \in X^{(p)}} GW_{3-p-q}^{3-p} (k(x_{p}), \omega_{x_{p}}) & \text{if } 0 \leq p \leq d \text{ and } 3 \geq p+q\\
     0 & \text{else}
   \end{cases}
\end{equation*}
\end{center}

on the first page and converges to $GW_{3-\ast}^{3} (X)$. There is a filtration

\begin{center}
$0 = F_{d+1} \subset F_{d} \subset ... \subset F_{1} \subset GW_{1}^{3} (X) = F_{0}$
\end{center}

with $F_{p}/F_{p+1} \cong E(3)_{\infty}^{p,2-p}$ for all $p$. Furthermore, the terms $E(3)_{2}^{p,q}$ on the second page are isomorphic to $H^{p} (X, \textbf{GW}_{3-q}^{3})$ for $0 \leq p \leq d$ and $p+q \leq 3$. The group $F_{1}$ coincides with $GW_{1,red}^{3} (X) = [X, SL/Sp]_{\mathbb{A}^{1}}$. In particular, if $X = Spec (R)$ is affine, then it coincides with $W_E (R)$. Hence we can compute the group $W_E (R)$ via the limit terms $E(3)_{\infty}^{p,2-p}$.

\begin{Prop}
Let $n \in \mathbb{N}$ be divisible by $4$. Then there is an isomorphism $W_E (S_{2n-1}) \cong \mathbb{Z}/2\mathbb{Z}$.
\end{Prop}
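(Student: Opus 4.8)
The plan is to run the Gersten--Grothendieck--Witt spectral sequence $E(3)$ for $X = Q_{2n-1} = Spec(S_{2n-1})$, a smooth affine $k$-scheme of dimension $d = 2n-1$. Since $W_E(S_{2n-1})$ is exactly the filtration step $F_1 \subset GW_1^3(X)$, and $F_1$ is glued from the limit terms $E(3)_\infty^{p,2-p}$ with $1 \le p \le d$, whose $E_2$-description is $E(3)_2^{p,2-p} \cong H^p(X, \textbf{GW}_{p+1}^3)$, I first want to cut these cohomology groups down using the geometry of $X$. Recall that $X$ is $\mathbb{A}^1$-weakly equivalent to $\mathbb{A}^n \setminus 0 \simeq \Sigma_s^{n-1} \mathbb{G}_m^{\wedge n}$; consequently, for every strictly $\mathbb{A}^1$-invariant sheaf $M$ the cohomology $H^p(X, M)$ is concentrated in degrees $p = 0$ and $p = n-1$, with $H^0(X,M) = M(k)$ and $H^{n-1}(X, M) \cong (M)_{-n}(k)$, where $(-)_{-n}$ denotes the $n$-fold contraction. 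Hence among the indices $1 \le p \le d$ the only surviving term is $E(3)_2^{n-1, 3-n} \cong H^{n-1}(X, \textbf{GW}_n^3) \cong (\textbf{GW}_n^3)_{-n}(k)$.

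Next I would identify this single term. Using the contraction formula $(\textbf{GW}_i^j)_{-1} \cong \textbf{GW}_{i-1}^{j-1}$ (which one checks on the diagonal via $\textbf{GW}_i^i \cong \textbf{K}_i^{MW}$ and $(\textbf{K}_i^{MW})_{-1} \cong \textbf{K}_{i-1}^{MW}$) together with the $4$-periodicity of the Grothendieck--Witt sheaves in the upper index, the hypothesis $4 \mid n$ gives $(\textbf{GW}_n^3)_{-n} \cong \textbf{GW}_0^{3-n} \cong \textbf{GW}_0^3$. Evaluating at $k$, the Karoubi periodicity sequence together with the vanishing of negative symplectic $K$-theory identifies $\textbf{GW}_0^3(k) = GW_0^3(k)$ with the cokernel of the forgetful map $f_{0,2}\colon K_0 Sp(k) \to K_0(k)$; since $k$ is a field this is the cokernel of the multiplication-by-$2$ map $\mathbb{Z} \to \mathbb{Z}$, so $E(3)_2^{n-1,3-n} \cong \mathbb{Z}/2\mathbb{Z}$.

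It then remains to show that this term survives to $E(3)_\infty$. Because the $E_2$-page is concentrated in the two columns $p = 0$ and $p = n-1$, the only differential that can interact with $E(3)^{n-1, 3-n}$ is the incoming $d_{n-1} \colon E(3)^{0,1} \to E(3)^{n-1,3-n}$; there is no outgoing differential. Moreover $E(3)^{0,1}$ admits no earlier differentials (their targets would lie in columns $2 \le p \le n-2$, which vanish), so it survives unchanged to the $(n-1)$-st page with $E(3)_{n-1}^{0,1} = E(3)_2^{0,1} = H^0(X, \textbf{GW}_2^3) = GW_2^3(k)$. Finally I would argue that $GW_2^3(k)$ is divisible: the Karoubi periodicity sequence and the vanishing $K_1 Sp(k) = 0$ exhibit $GW_2^3(k)$ as a quotient of $K_2(k)$, which is divisible since $k$ is algebraically closed. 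A homomorphism from a divisible group to $\mathbb{Z}/2\mathbb{Z}$ is zero, so $d_{n-1} = 0$, hence $E(3)_\infty^{n-1,3-n} \cong \mathbb{Z}/2\mathbb{Z}$ and therefore $W_E(S_{2n-1}) = F_1 \cong \mathbb{Z}/2\mathbb{Z}$.

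The main obstacle I expect is twofold and sits in the middle steps: justifying the cohomological concentration of $Q_{2n-1}$ together with the contraction-and-periodicity identification $(\textbf{GW}_n^3)_{-n} \cong \textbf{GW}_0^3$, and then verifying that the single higher differential $d_{n-1}$ genuinely vanishes. The divisibility argument for the source $GW_2^3(k)$ is what makes the algebraic closedness of $k$ enter in an essential way; over a general field this differential need not vanish.
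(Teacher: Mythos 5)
Your proposal is correct and runs on the same engine as the paper's proof: the Gersten--Grothendieck--Witt spectral sequence, the concentration of the cohomology of $Q_{2n-1} \simeq_{\mathbb{A}^{1}} \mathbb{A}^{n}\setminus 0$ with strictly $\mathbb{A}^{1}$-invariant coefficients in degrees $0$ and $n-1$ (this is exactly \cite[Lemma 4.5]{AF1}, which the paper cites), and the identification of the lone term in column $n-1$ and total degree $2$ with $GW_{0}^{3} (k) \cong \mathbb{Z}/2\mathbb{Z}$ via contractions and $4$-periodicity (where the paper quotes \cite[Lemma 4.1]{FS1} for $GW_{0}^{3} (k) \cong \mathbb{Z}/2\mathbb{Z}$, you rederive it from Karoubi periodicity and the vanishing of $GW_{-1}^{2} (k)$; running the sequence on the affine $Q_{2n-1}$ rather than on $\mathbb{A}^{n}\setminus 0$ is immaterial, since the $E_{2}$-pages agree). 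The genuine difference is your final step: the paper's proof asserts $F_{n-1}/F_{n} = GW_{0}^{3} (k)$ outright, i.e.\ it tacitly takes $E(3)_{\infty}^{n-1,3-n} = E(3)_{2}^{n-1,3-n}$, whereas you isolate the one possibly nonzero differential $d_{n-1}: E(3)_{n-1}^{0,1} \rightarrow E(3)_{n-1}^{n-1,3-n}$ and kill it by showing that its source $E(3)_{n-1}^{0,1} = GW_{2}^{3} (k)$ is divisible, being a quotient of $K_{2} (k)$ because $GW_{1}^{2} (k) = K_{1}Sp (k) = 0$ for a field. This supplies a step the paper leaves unaddressed, and it is also the only point where a hypothesis on $k$ beyond the section's standing assumptions (perfect, $char(k) \neq 2$) enters: your argument needs $K_{2} (k)$ divisible, e.g.\ $k$ algebraically closed, which is precisely the setting in which the proposition is later applied in the proof of Theorem \ref{T3.17}. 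So your route is slightly less general in its stated hypotheses but more complete in its justification; the paper's version is terser but owes the reader an argument for why the $(n-1,3-n)$-term survives to the limit page.
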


\begin{proof}
We have identifications

\begin{center}
$W_E (S_{2n-1}) \cong [Q_{2n-1}, SL/Sp]_{\mathbb{A}^{1}} \cong [\mathbb{A}^{n}\setminus 0, SL/Sp]_{\mathbb{A}^{1}} = GW_{1,red}^{3} (\mathbb{A}^{n}\setminus 0)$.
\end{center}

We use the Gersten-Grothendieck-Witt spectral sequence $E (3)$ associated to $X = \mathbb{A}^{n} \setminus 0$ in order to compute $GW_{1,red}^{3} (\mathbb{A}^{n}\setminus 0)$. As indicated above, we have a filtration

\begin{center}
$0 = F_{n+1} \subset F_{n} \subset ... \subset GW_{1,red}^{3} (X) = F_{1} \subset GW_{1}^{3} (X) = F_{0}$
\end{center}

with $F_{p}/F_{p+1} \cong E(3)_{\infty}^{p,2-p}$ for all $p$.\\
Let us compute the limit terms $E(3)_{\infty}^{p,q}$. It is known that the terms $E(3)_{2}^{p,q}$ on the second page are precisely isomorphic to $H^{p} (X, \textbf{GW}_{3-q}^{3})$. Since $n$ is divisible by $4$, it follows from \cite[Lemma 4.5]{AF1} that

\begin{center}
\begin{equation*}
   E(3)_{2}^{p,q} \cong
   \begin{cases}
     GW_{3-q}^{3} (k) & \text{if } p = 0 \\
     GW_{3-n-q}^{3} (k) & \text{if } p = n-1 \\
     0 & \text{else.}
   \end{cases}
\end{equation*}
\end{center}

In particular, we have that $F_{p}/F_{p+1} \cong E(3)_{\infty}^{p,2-p} = 0$ if $0 < p \neq n-1$ and $F_{n-1}/F_{n} = GW_{0}^{3} (k)$. Hence $F_{n+1} = F_{n} = 0$ and $F_{1} = F_{2} = ... = F_{n-1}$. It follows from the exact sequence 

\begin{center}
$0 \rightarrow F_{n} \rightarrow F_{n-1} \rightarrow F_{n-1}/F_{n} \rightarrow 0$
\end{center}

that $GW_{1,red}^{3} (X) = F_{1} = GW_{0}^{3} (k)$. But $GW_{0}^{3} (k) \cong \mathbb{Z}/2\mathbb{Z}$ by \cite[Lemma 4.1]{FS1}. This proves the proposition.
\end{proof}

To conclude this section, we finally prove some cohomological criteria for the $2$-divisibility of the groups $W_E (R)$ and $W_{SL} (R)$ of any smooth affine algebra $R$ of dimension $4$ over an algebraically closed field $k$ with $char(k) \neq 2$. For the following proposition, recall that one can define the Milnor-Witt $K$-theory $K^{MW}_{\ast} (F)$ of a field $F$, which is a $\mathbb{Z}$-graded ring with explicit generators and relations given in \cite[Definition 2.1]{Mo}; for example, the group $K^{MW}_{0} (F)$ is canonically isomorphic to the Grothendieck-Witt ring $GW_{0}^{0} (F) = GW (F)$ of non-degenerate symmetric bilinear forms over $F$. We denote by $\textbf{K}^{MW}_{i}$ the associated Milnor-Witt $K$-theory sheaves in degree $i \in \mathbb{Z}$. For a general introduction to Milnor-Witt $K$-theory, we refer the reader to \cite[Section 2]{Mo}.

\begin{Prop}\label{P2.8}
Let $X=Spec(R)$ be a smooth affine fourfold over an algebraically closed field $k$ with $char (k) \neq 2$. Then $W_E (R)$ is $2$-divisible if $H^{2} (X,\textbf{\textsc{K}}_{3}^{MW})$ and $H^{3} (X, \textbf{\textsc{GW}}_{4}^{3})$ are $2$-divisible. Furthermore, $W_{SL} (R)$ is $2$-divisible if $CH^{3} (X) = CH^{4} (X) = 0$ and $H^{2} (X, \textbf{\textsc{I}}^{3})$ is $2$-divisible.
\end{Prop}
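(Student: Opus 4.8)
The plan is to compute both groups through the Gersten-Grothendieck-Witt spectral sequence $E(3)$ attached to $X$, exactly in the spirit of the preceding propositions. Recall that $W_E(R) = GW_{1,red}^3(X) = F_1$, so the finite filtration $0 = F_5 \subset F_4 \subset \dots \subset F_1 = W_E(R)$ exhibits $W_E(R)$ as an iterated extension of the limit terms $F_p/F_{p+1} \cong E(3)_\infty^{p,2-p}$ for $p = 1,2,3,4$. Since an extension of $2$-divisible abelian groups is again $2$-divisible, it suffices to prove that each of these four graded pieces is $2$-divisible. Each $E(3)_\infty^{p,2-p}$ is a subquotient of $E(3)_2^{p,2-p} = H^p(X, \textbf{GW}_{1+p}^3)$; in particular the two middle terms are subquotients of $H^2(X, \textbf{GW}_3^3) = H^2(X, \textbf{K}_3^{MW})$ and of $H^3(X, \textbf{GW}_4^3)$, which are precisely the groups assumed $2$-divisible.

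The crucial bookkeeping is to control the differentials so that ``subquotient'' can be upgraded to ``$2$-divisible quotient''. First I would record that $H^4(X, \textbf{GW}_4^3) = 0$: this follows from the exact sequence preceding Proposition \ref{P2.6} together with $Ch^4(X) = CH^4(X)/2 = 0$, the latter because $CH^4(X)$ is uniquely $2$-divisible on a smooth affine fourfold over an algebraically closed field. For $X$ of dimension $4$ the terms with $p = 3,4$ carry no outgoing differentials, and the vanishing of $H^4(X, \textbf{GW}_4^3)$ kills the differential leaving the $p = 2$ term; hence $E(3)_\infty^{2,0}$ is a quotient of $H^2(X, \textbf{K}_3^{MW})$ and $E(3)_\infty^{3,-1}$ a quotient of $H^3(X, \textbf{GW}_4^3)$, both $2$-divisible by hypothesis. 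There remain the two edge terms: $E(3)_\infty^{4,-2}$, a quotient of $H^4(X, \textbf{GW}_5^3)$, and $E(3)_\infty^{1,1}$, which is the kernel of $d_2 \colon H^1(X, \textbf{GW}_2^3) \to H^3(X, \textbf{K}_3^{MW})$. Using the Asok-Fasel identifications of these off-diagonal Grothendieck-Witt sheaves together with the bounds on the cohomological dimension of the residue fields used in Proposition \ref{P2.6}, I would show that $H^4(X, \textbf{GW}_5^3)$ is $2$-divisible and that $H^1(X, \textbf{GW}_2^3)$ vanishes; the latter forces $E(3)_\infty^{1,1} = 0$, and both edge terms are thereby settled, completing part one.

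For $W_{SL}(R)$ I would exploit that it is the cokernel of the hyperbolic map $SK_1(R) \xrightarrow{H} W_E(R)$, so that it inherits the filtration from that of $W_E(R)$. The hyperbolic map is compatible with the spectral sequences, and at the level of the $p = 2$ graded piece its image is exactly the contribution of the subsheaf $2\textbf{K}_3^M \subset \textbf{K}_3^{MW} = \textbf{GW}_3^3$. By Morel's short exact sequence $0 \to 2\textbf{K}_3^M \to \textbf{K}_3^{MW} \to \textbf{I}^3 \to 0$ the corresponding graded piece of $W_{SL}(R)$ is then governed by $\textbf{I}^3$ rather than by $\textbf{K}_3^{MW}$, i.e.\ is a subquotient of $H^2(X, \textbf{I}^3)$; this is precisely why the weaker-looking hypothesis on $\textbf{I}^3$ suffices and why the statement is phrased for $W_{SL}$ rather than for $W_E$. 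The hypothesis $CH^3(X) = 0$ forces $H^3(X, \textbf{GW}_4^3) = 0$ (Proposition \ref{P2.6} and the sequence preceding it), so the $p = 3$ piece vanishes; $CH^4(X) = 0$ again gives $H^4(X, \textbf{GW}_4^3) = 0$ and handles the top edge; and the $2$-divisibility of $H^2(X, \textbf{I}^3)$ handles the $p = 2$ piece. As in part one the two edge terms are disposed of unconditionally, and assembling the graded pieces yields the $2$-divisibility of $W_{SL}(R)$.

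The main obstacle I anticipate is twofold. The genuinely delicate point in part one is the unconditional analysis of the two edge terms $H^1(X, \textbf{GW}_2^3)$ and $H^4(X, \textbf{GW}_5^3)$, which requires pinning down the relevant off-diagonal Grothendieck-Witt sheaves and running Gersten and Bloch-Ogus arguments in the style of Proposition \ref{P2.6}. For part two the key difficulty is to make rigorous the claim that the hyperbolic image meets the $p = 2$ graded piece exactly in the $2\textbf{K}_3^M$-part, so that the controlling sheaf drops from $\textbf{K}_3^{MW}$ to $\textbf{I}^3$; this hinges on the compatibility of the forgetful and hyperbolic maps with the Gersten-Grothendieck-Witt filtration and on the precise image of Quillen $K$-theory under the hyperbolic homomorphism.
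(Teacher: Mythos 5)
Your treatment of $W_E(R)$ follows the paper's proof almost verbatim: same spectral sequence, same filtration, same use of $H^4(X,\textbf{GW}_4^3)=0$ (via $2$-divisibility of $CH^4(X)$) to make $E(3)_\infty^{2,0}$ a quotient of $H^2(X,\textbf{K}_3^{MW})$ and of the dimension bound to make $E(3)_\infty^{3,-1}$ a quotient of $H^3(X,\textbf{GW}_4^3)$. The only difference is that you defer the two edge terms to an unspecified argument at the $E_2$-level, whereas the paper settles them at the $E_1$-level with concrete inputs: by \cite[Lemma 2.2]{FRS} the entire row $E(3)_1^{p,1}$ vanishes (so $E(3)_\infty^{1,1}=0$, in fact $F_2=W_E(R)$, and in particular your claim $H^1(X,\textbf{GW}_2^3)=0$ holds since the whole Gersten complex of that sheaf is zero), and $E(3)_\infty^{4,-2}$ is a quotient of $\bigoplus_{x\in X^{(4)}}GW_1^{3}(k(x),\omega_{x})\cong\bigoplus_{x} k^{\times}$, which is $2$-divisible because $k$ is algebraically closed. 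So part one is correct modulo these two citations, which your sketch correctly identifies as the remaining work.

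The genuine gap is in part two, and it is precisely the point you flag as ``the key difficulty'' but do not resolve. Your stated conclusion---that the $p=2$ graded piece of $W_{SL}(R)$ is a \emph{subquotient} of $H^2(X,\textbf{I}^3)$---is too weak to finish the proof: a subgroup of a $2$-divisible group need not be $2$-divisible (consider $\mathbb{Z}\subset\mathbb{Q}$), so ``subquotient'' must be upgraded to ``quotient,'' and this upgrade is the actual mathematical content. Two surjectivity statements are needed and neither is supplied in your proposal: (i) the image of the hyperbolic map on $H^2$ equals the full image of $H^2(X,2\textbf{K}_3^M)\to H^2(X,\textbf{K}_3^{MW})$, which the paper obtains from surjectivity of $H^2(X,\textbf{K}_3^Q)\to H^2(X,2\textbf{K}_3^Q)$ together with the identification of $F'_2/F'_3$ as a quotient of $H^2(X,\textbf{K}_3^Q)$ (this is where $CH^4(X)=H^4(X,\textbf{K}_4^Q)=0$ enters on the $K$-theory side, a use of the hypothesis your sketch omits); and (ii) surjectivity of $H^2(X,\textbf{K}_3^{MW})\to H^2(X,\textbf{I}^3)$ in the long exact sequence of Morel's sequence $0\to 2\textbf{K}_3^M\to\textbf{K}_3^{MW}\to\textbf{I}^3\to 0$, without which one only gets an injection of the relevant quotient into $H^2(X,\textbf{I}^3)$. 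The paper's actual route packages this differently: the hyperbolic map induces a morphism from the Brown--Gersten--Quillen to the Gersten--Grothendieck--Witt spectral sequence, the snake lemma is applied to the two filtration sequences, and the fact that $W_{SL}(R)$ is $2$-torsion (because $k$ is algebraically closed, via surjectivity of the Vaserstein symbol and Swan--Towber) kills the $2$-divisible $F_3$-contribution, producing a genuine surjection $H^2(X,\textbf{I}^3)\twoheadrightarrow W_{SL}(R)$. Your graded-pieces reorganization of the cokernel can in principle avoid the snake-lemma/$2$-torsion step, but only if (i) and (ii) are proved; as written, the argument does not close.
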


\begin{proof}
We use the Gersten-Grothendieck-Witt spectral sequence $E(3)$ associated to $X$. We have a filtration

\begin{center}
$0 = F_{5} \subset F_{4} \subset ... \subset GW_{1,red}^{3} (R) = F_{1} \subset GW_{1}^{3} (R) = F_{0}$
\end{center}

with $F_{p}/F_{p+1} \cong E(3)_{\infty}^{p,2-p}$ for all $p$. The terms $E(3)_{2}^{p,q}$ on the second page are $H^{p} (X, \textbf{GW}_{3-q}^{3})$ for $0 \leq p \leq 4$ and $p+q \leq 3$ and $0$ elsewhere.\\
First of all, \cite[Lemma 2.2]{FRS} implies that $E(3)_{1}^{p,1} = 0$ for all $p$. Therefore $E(3)_{\infty}^{1,1} = 0$ and hence $F_{2} = W_E (R)$. Moreover, since $k$ is algebraically closed, the limit term $F_{4} = E(3)_{\infty}^{4,-2}$ is a quotient of $\oplus_{x \in X^{(4)}} k^{\times}$ and therefore $2$-divisible. Altogether, we have two short exact sequences

\begin{center}
$0 \rightarrow F_{3} \rightarrow W_E (R) \rightarrow E(3)_{\infty}^{2,0} \rightarrow 0$,\\
$0 \rightarrow F_{4} \rightarrow F_{3} \rightarrow E(3)_{\infty}^{3,-1} \rightarrow 0$,
\end{center}

where $F_{4}$ is $2$-divisible. In particular, $W_E (R)$ is $2$-divisible as soon as $E(3)_{\infty}^{2,0}$ and $E(3)_{\infty}^{3,-1}$ are $2$-divisible.\\
However, $E(3)_{\infty}^{3,-1}$ is a quotient of $H^{3} (X, \textbf{GW}_{4}^{3})$. Furthermore, we know that $E(3)_{2}^{2,0}$ is precisely $H^{2} (X, \textbf{GW}_{3}^{3}) \cong H^{2} (X, \textbf{K}_{3}^{MW})$. Hence $E(3)_{\infty}^{2,0}$ is precisely the kernel of the differential mapping into $E(3)_{2}^{4,-1} \cong H^{4} (X, \textbf{GW}_{4}^{3})$. But by the fact that $CH^{4} (X)$ is $2$-divisible and by \cite[Proposition 3.6.4]{AF3}, we can conclude that $H^{4} (X, \textbf{GW}_{4}^{3}) = 0$. Thus, the limit term $E(3)_{\infty}^{2,0}$ is precisely $H^{2} (X, \textbf{K}_{3}^{MW})$ and the first statement follows.\\
For the second statement, we will use the Brown-Gersten-Quillen spectral sequence $E'(3)$ associated to $X$ which has terms of the form

\begin{center}
\begin{equation*}
   E'(3)_{1}^{p,q} \cong
   \begin{cases}
     \bigoplus_{x_{p} \in X^{(p)}} K_{3-p-q}^{Q} (k(x_{p})) & \text{if } 0 \leq p \leq 4 \text{ and } 3 \geq p+q\\
     0 & \text{else}
   \end{cases}
\end{equation*}
\end{center}

on the first page and converges to $K^{Q}_{3-\ast} (X)$. The group $SK_{1} (R)$ can be computed via the limit terms $E'(3)_{\infty}^{p,2-p}$: there is a filtration

\begin{center}
$0 = F'_{5} \subset F'_{4} \subset ... \subset SK_{1} (R) = F'_{1} \subset K_{1} (R) = F'_{0}$
\end{center}

with $F'_{p}/F'_{p+1} \cong E'(3)_{\infty}^{p,2-p}$ for all $p$. Moreover, the terms $E'(3)_{2}^{p,q}$ on the second page are isomorphic to $H^{p} (X, \textbf{K}^{Q}_{3-q})$ for $0 \leq p \leq 4$ and $p+q \leq 3$.\\
By construction of both the Brown-Gersten-Quillen and the Gersten-Grothendieck-Witt spectral sequences, the hyperbolic morphism induces a morphism of spectral sequences. Hence we get a commutative diagram

\begin{center}
$\begin{xy}
  \xymatrix{
      0 \ar[r] \ar[d] & F'_{3} \ar[d]^{H_{1,3}} \ar[r] & SK_{1} (X) \ar[d]^{H_{1,3}} \ar[r] & F'_{1}/F'_{3} \ar[d]^{H_{1,3}} \ar[r] & 0 \ar[d]\\
      0 \ar[r] & F_{3} \ar[r] & W_E (X) \ar[r] & H^{2} (X, \textbf{K}_{3}^{MW}) \ar[r] & 0   
  }
\end{xy}$
\end{center}

with exact rows. If $H^{3} (X, \textbf{GW}_{4}^{3})$ is $2$-divisible (in particular, if $CH^{3} (X)$ is $2$-divisible by Proposition \ref{P2.6}), then we have already seen above that $F_{3}$ is $2$-divisible. Since $k$ is algebraically closed, $W_{SL} (R)$ is a $2$-torsion group. Hence the snake lemma induces an isomorphism $W_{SL} (R) \xrightarrow{\cong} H^{2} (X, \textbf{K}_{3}^{MW})/H_{1,3}(F'_{1}/F'_{3})$. In particular, there is a surjection $H^{2} (X, \textbf{K}_{3}^{MW})/H_{1,3}(F'_{2}/F'_{3}) \rightarrow W_{SL} (R)$.\\
Since $CH^{4} (X) = H^{4} (X, \textbf{K}^{Q}_{4}) = 0$, the differential starting at ${E'(3)}^{2,0}_{2}$ maps into a trivial group and hence its kernel is isomorphic to $H^{2} (X, \textbf{K}_{3}^{Q})$. It then follows that the group $F'_{2}/F'_{3} \cong E'(3)_{\infty}^{2,0}$ will be a quotient of ${E'(3)}^{2,0}_{2} = H^{2} (X, \textbf{K}_{3}^{Q})$ and hence $H^{2} (X, \textbf{K}_{3}^{MW})/H_{1,3}(F'_{2}/F'_{3}) \cong H^{2} (X, \textbf{K}_{3}^{MW})/H_{3,3}(H^{2} (X, \textbf{K}_{3}^{Q}))$. Finally, as the homomorphism $H^{2} (X, \textbf{K}_{3}^{Q}) \rightarrow H^{2} (X, 2\textbf{K}_{3}^{Q})$ is surjective, the long exact sequence of cohomology groups associated to the short exact sequence 

\begin{center}
$0 \rightarrow 2\textbf{K}_{3}^{M} \rightarrow \textbf{K}_{3}^{MW} \rightarrow \textbf{I}^{3} \rightarrow 0$
\end{center}

(whose existence is due to Morel and follows from Voevodsky's resolution of the Bloch-Kato conjectures) shows that $H^{2} (X, \textbf{K}_{3}^{MW})/H_{3,3} (X, \textbf{K}_{3}^{Q}) \cong H^{2} (X, \textbf{I}^{3})$. This yields the second statement.
\end{proof}

\section{The generalized Vaserstein symbol modulo $SL$}\label{The generalized Vaserstein symbol modulo $SL$}\label{3}

In this section, we prove that the generalized Vaserstein symbol defined in \cite{Sy} for any projective module $P_{0}$ of rank $2$ over a commutative ring $R$ together with a fixed trivialization $\theta_{0}: R \xrightarrow{\cong} \det (P_{0})$ of its determinant descends to a well-defined map $V_{\theta_{0}}: Um (P_{0} \oplus R)/SL (P_{0} \oplus R) \rightarrow \tilde{V}_{SL} (R)$, which we will call the generalized Vaserstein symbol modulo $SL$. We will study this map and give criteria for its surjectivity and injectivity for Noetherian rings of dimension $\leq 4$. As an application of this, we will be able to give a criterion for the triviality of the orbit space $Um (P_{0} \oplus R)/SL (P_{0} \oplus R)$ for such rings. Motivated by this criterion, we study symplectic orbits of unimodular rows and prove in particular that $Sp_{d} (R)$ acts transitively on $\mathit{Um}_{d} (R)$ whenever $d$ is divisible by $4$ and $R$ is a smooth affine algebra of dimension $d$ over an algebraically closed field $k$ with $d! \in k^{\times}$. As an immediate consequence of this, we will prove that $\mathit{Um}_{3} (R)/SL_{3} (R)$ is trivial if and only if $\tilde{V}_{SL} (R)$ is trivial whenever $R$ is a smooth affine algebra of dimension $4$ over an algebraically closed field $k$ with $6 \in k^{\times}$. Finally, we can also give cohomological criteria for the triviality of $\mathit{Um}_{3} (R)/SL_{3} (R)$ in this situation.

\subsection{The generalized Vaserstein symbol}\label{The generalized Vaserstein symbol}\label{3.1}

Let $R$ be a commutative ring and $P_0$ be a projective $R$-module of rank $2$. We assume that $P_0$ admits a trivialization $\theta_{0}: R \rightarrow \det(P_0)$ of its determinant.\\
Let us recall the definition of the generalized Vaserstein symbol associated to $\theta_{0}$: We denote by $\chi_0$ the canonical non-degenerate alternating form on $P_0$ given by $P_0 \times P_0 \rightarrow R, (p,q) \mapsto \theta_{0}^{-1} (p \wedge q)$.\\
Now let $Um (P_0 \oplus R)$ be the set of epimorphism $P_0 \oplus R \rightarrow R$. Any element $a$ of $Um (P_0 \oplus R)$ gives rise to an exact sequence of the form

\begin{center}
$0 \rightarrow P(a) \rightarrow P_0 \oplus R \xrightarrow{a} R \rightarrow 0$,
\end{center}

\noindent where $P(a) = \ker (a)$. Any section $s: R \rightarrow P_0 \oplus R$ of $a$ determines a canonical retraction $r_{s}: P_0 \oplus R \rightarrow P(a)$ given by $r_{s}(p)= p - s a(p)$ and an isomorphism $i_{s}: P_0 \oplus R \rightarrow P(a) \oplus R$ given by $i_{s}(p)  = a(p) + r_{s}(p)$.\\
The exact sequence above yields an isomorphism $\det(P_0) \cong \det (P(a))$ (independent of $s$) and therefore an isomorphism $\theta : R \rightarrow \det (P(a))$ obtained by composing with $\theta_0$. We denote by $\chi_a$ the non-degenerate alternating form on $P(a)$ given by $P(a) \times P(a) \rightarrow R, (p,q) \mapsto \theta^{-1} (p \wedge q)$.\\
Altogether, we obtain a non-degenerate alternating form

\begin{center}
$V (a,s) = {(i_{s} \oplus 1)}^{t} (\chi_a \perp \psi_2) {(i_{s} \oplus 1)}$
\end{center}

on $P_{0} \oplus R^{2}$, which depends on the section $s$ of $a$. Nevertheless, assigning to $a \in Um (P_{0} \oplus R)$ the element

\begin{center}
$V_{\theta_{0}} (a) = [P_0 \oplus R^2, \chi_0 \perp \psi_2, {(i_{s} \oplus 1)}^{t} (\chi_a \perp \psi_2) {(i_{s} \oplus 1)}]$
\end{center}

induces a well-defined map

\begin{center}
$V_{\theta_{0}}: Um (P_{0} \oplus R)/E (P_{0} \oplus R) \rightarrow \tilde{V} (R)$
\end{center}

called the generalized Vaserstein symbol associated to $\theta_{0}$ (cp. \cite[Theorem 1]{Sy}). If there is no ambiguity, we denote $V_{\theta_{0}}$ simply by $V$.\\
Of course, if $P_{0} = R^{2}$, then we have a canonical trivialization $\theta_{0}$ of $\det(R^{2})$ given by $1 \mapsto e_{1} \wedge e_{2}$, where $e_{1} = (1,0), e_{2} = (0,1) \in R^{2}$. The generalized Vaserstein symbol associated to $-\theta_{0}$ is just the classical one introduced in \cite[\S 5]{SV}.\\
Now let us return to the general case of a projective $R$-module $P_{0}$ of rank $2$ with a fixed trivialization $\theta_{0}$. We compose the generalized Vaserstein symbol $V = V_{\theta}$ with the canonical epimorphism $\tilde{V} (R) \rightarrow \tilde{V}_{SL} (R)$:

\begin{Thm}\label{T3.1}
Let $\varphi \in SL (P_{0} \oplus R)$ and $a \in Um (P_{0} \oplus R)$. Then there is an equality $V (a) = V (a \varphi)$ in $\tilde{V}_{SL} (R)$. In particular, we obtain a well-defined map $V: Um (P_0 \oplus R)/SL (P_0 \oplus R) \rightarrow \tilde{V}_{SL} (R)$, which we call the generalized Vaserstein symbol modulo $SL$.
\end{Thm}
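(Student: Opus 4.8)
The plan is to compute $V(a\varphi)$ directly by choosing a section adapted to $\varphi$, and to show that the alternating form one obtains differs from the form computing $V(a)$ only by conjugation by an element of $SL(P_0 \oplus R^2)$; the third defining relation of $V_{SL}(R)$ then identifies the two classes. Concretely, I would set $b = a\varphi$ and fix a section $s$ of $a$. Then $s' = \varphi^{-1} s$ is a section of $b$, since $b s' = a\varphi\varphi^{-1} s = as = 1$. Moreover $\varphi$ carries $P(b) = \ker(a\varphi) = \varphi^{-1}(P(a))$ isomorphically onto $P(a)$; write $\bar\varphi = \varphi|_{P(b)} \colon P(b) \to P(a)$ for this restriction.

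The first and crucial step is to establish the compatibility $\bar\varphi^{\vee} \chi_a \bar\varphi = \chi_b$ of the canonical alternating forms. By definition $\chi_a(p,q) = \theta^{-1}(p \wedge q)$ and $\chi_b(p,q) = \theta'^{-1}(p \wedge q)$, where $\theta \colon R \to \det(P(a))$ and $\theta' \colon R \to \det(P(b))$ arise from $\theta_0$ via the canonical isomorphisms $\det(P_0) \cong \det(P(a))$ and $\det(P_0) \cong \det(P(b))$ coming from the defining short exact sequences. Since $\varphi$ is a morphism between these two short exact sequences (being the identity on $R$ and $\varphi$ on the middle term $P_0 \oplus R$), naturality of the determinant identifies $\det(\bar\varphi) \colon \det(P(b)) \to \det(P(a))$ with $\det(\varphi)$ acting on $\det(P_0 \oplus R)$. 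As $\varphi \in SL(P_0 \oplus R)$ we have $\det(\varphi) = 1$, so $\det(\bar\varphi)$ agrees with the composite of the two canonical isomorphisms; equivalently $\theta = \det(\bar\varphi) \circ \theta'$. Evaluating $\bar\varphi^{\vee}\chi_a\bar\varphi$ on $p,q \in P(b)$ then gives $\theta^{-1}(\bar\varphi(p)\wedge\bar\varphi(q)) = \theta^{-1}(\det(\bar\varphi)(p\wedge q)) = \theta'^{-1}(p\wedge q) = \chi_b(p,q)$, as claimed.

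Next I would relate the isomorphisms $i_s$ and $i_{s'}$. A direct computation with $r_{s'}(p) = p - s' b(p) = p - \varphi^{-1} s\, a\varphi(p)$ shows $(\bar\varphi \oplus id_R)\circ i_{s'} = i_s \circ \varphi$, hence $i_{s'} = (\bar\varphi^{-1}\oplus id_R)\, i_s\, \varphi$. Setting $\Phi = \varphi \oplus id_R \in SL(P_0 \oplus R^2)$, so that $\det(\Phi) = \det(\varphi) = 1$, this yields $i_{s'}\oplus 1 = (\bar\varphi^{-1}\oplus id_{R^2})\,(i_s\oplus 1)\,\Phi$. Substituting into the definition of $V(a\varphi, s')$ and using the previous step, conjugation by $\bar\varphi^{-1}\oplus id_{R^2}$ transforms $\chi_b \perp \psi_2$ back into $\chi_a \perp \psi_2$, so that $V(a\varphi, s') = \Phi^{t}\, V(a,s)\, \Phi$. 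Consequently $V(a\varphi) = [P_0\oplus R^2, \chi_0\perp\psi_2, \Phi^{t} V(a,s)\Phi]$ while $V(a) = [P_0\oplus R^2, \chi_0\perp\psi_2, V(a,s)]$, and since $\Phi \in SL(P_0\oplus R^2)$ the third defining relation of $V_{SL}(R)$ gives $V(a) = V(a\varphi)$ in $\tilde{V}_{SL}(R)$.

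The main obstacle is the orientation bookkeeping in the compatibility step of the second paragraph: one must verify that $\chi_a$ and $\chi_b$ correspond exactly under $\bar\varphi$, and it is precisely here that the hypothesis $\det(\varphi) = 1$ enters. Had $\det(\varphi)$ been an arbitrary unit $u$, one would instead obtain $\bar\varphi^{\vee}\chi_a\bar\varphi = u\,\chi_b$, and the argument would produce only an equality modulo the larger $GL$-relation; the passage to $\tilde{V}_{SL}(R)$ and the use of the $SL$-relation are what make the statement hold for all of $SL(P_0\oplus R)$. I would also note that, since $V$ is already known to be invariant under $E(P_0\oplus R)$ and independent of the chosen section, one is free to compute $V(a\varphi)$ using the particular section $s' = \varphi^{-1}s$, which is what streamlines the verification.
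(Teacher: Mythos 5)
Your proof is correct and follows essentially the same route as the paper's own argument: the paper likewise takes the section $\varphi^{-1}s$ of $a\varphi$, uses the same intertwining identity relating $i_{s}$ and $i_{\varphi^{-1}s}$ via $\varphi$ and its restriction $\bar\varphi \colon P(a\varphi) \to P(a)$, reduces everything to the compatibility $\bar\varphi^{\vee}\chi_{a}\bar\varphi = \chi_{a\varphi}$, and deduces that compatibility from the fact that $\det(\varphi)=1$ forces the induced automorphism of $\det(P_{0}\oplus R)$ to be the identity, before concluding with the $SL$-relation in $V_{SL}(R)$. The only cosmetic difference is that you obtain the compatibility of forms from naturality of determinants applied to the morphism of short exact sequences, whereas the paper computes directly with wedge products of the form $p\wedge q\wedge\varphi^{-1}s(1)$ in $\det(P_{0}\oplus R)$.
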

 
\begin{proof}
First of all, let $\varphi \in SL (P_0 \oplus R)$ and let $s: R \rightarrow P_0 \oplus R$ be a section of $a \in Um (P_0 \oplus R)$. Then ${\varphi}^{-1} s$ is a section of $a \varphi$. We let $i: P_0 \oplus R \rightarrow P(a) \oplus R$ and $j: P_0 \oplus R \rightarrow P(a \varphi) \oplus R$ be the isomorphisms induced by the sections $s$ and ${\varphi}^{-1} s$. Obviously, it suffices to show that

\begin{center}
${(\varphi \oplus 1)}^{t} {(i \oplus 1)}^{t} (\chi_a \perp \psi_2) {(i \oplus 1)} {(\varphi \oplus 1)} = {(j \oplus 1)}^{t} (\chi_{(a \varphi)} \perp \psi_2) {(j \oplus 1)}$.
\end{center}

One can check easily that $(i \oplus 1) (\varphi \oplus 1) = ((\varphi \oplus 1) \oplus 1) (j \oplus 1)$, where by abuse of notation we understand $\varphi$ as the induced isomorphism $P (a \varphi) \rightarrow P(a)$. Hence it suffices to show that ${\varphi}^{t} \chi_a \varphi = \chi_{a \varphi}$.\\
For this, we let $(p,q)$ a pair of elements in $P (a \varphi)$; by definition, $\chi_{a \varphi}$ sends these elements to the image of $p \wedge q$ under the isomorphism $\det (P (a \varphi)) \cong R$. This element can also be described as the image of $p \wedge q \wedge {\varphi}^{-1} s(1)$ under the isomorphism $\det (P_0 \oplus R) \cong R$.\\
Analogously, the alternating form ${\varphi}^{t} \chi_a \varphi$ sends $(p,q)$ to the image of the element ${\varphi} (p) \wedge {\varphi} (q) \wedge s(1)$ under the isomorphism $\det (P_0 \oplus R) \cong R$. Since $\varphi$ has determinant $1$, the automorphism of $\det (P_{0} \oplus R)$ induced by $\varphi$ is the identity (cp. \cite[Lemma 2.11]{Sy}). This proves the desired equality ${\varphi}^{t} \chi_a \varphi = \chi_{a \varphi}$.
\end{proof}

\subsection{An exact sequence}\label{An exact sequence}\label{3.2}

In this section, we assume that $R$ is Noetherian commutative ring of Krull dimension $\leq 4$. Let us fix some notation: We let $P_0$ be a projective $R$-module of rank $2$. For all $n \geq 3$, let $P_n = P_0 \oplus R e_3 \oplus ... \oplus R e_n$ be the direct sum of $P_0$ and free $R$-modules $R e_i$, $3 \leq i \leq n$, of rank $1$ with explicit generators $e_i$. We will sometimes omit these explicit generators in the notation. We denote by $\pi_{k, n}: P_n \rightarrow R$ the projections onto the free direct summands of rank $1$ with index $k = 3, ...,n$.\\
We assume that $P_0$ admits a trivialization $\theta_{0}: R \rightarrow \det(P_0)$ of its determinant. By abuse of notation, we denote by $V = V_{\theta}: Um (P_{0} \oplus R)/E (P_{0} \oplus R) \rightarrow \tilde{V}_{SL} (R)$ the composite of the generalized Vaserstein symbol associated to $\theta$ and the canonical epimorphism $\tilde{V} (R) \rightarrow \tilde{V}_{SL} (R)$.

\begin{Thm}\label{T3.2}
Assume that $SL (P_{5})$ acts transitively on $Um (P_{5})$. Then the map $V: Um (P_{0} \oplus R)/E (P_{0} \oplus R) \rightarrow \tilde{V}_{SL} (R)$ is surjective.
\end{Thm}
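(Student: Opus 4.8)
The plan is to establish surjectivity in two stages. First I would show that every class in $\tilde{V}_{SL}(R)$ can be represented by a non-degenerate alternating form on the rank-$4$ module $P_0 \oplus R^2$, i.e. by a triple $[P_0 \oplus R^2, \chi_0 \perp \psi_2, \gamma]$; second, I would realize every such rank-$4$ representative as $V(a)$ for a suitable $a \in Um(P_0 \oplus R)$. The transitivity hypothesis enters only in the first stage.

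For the reduction stage, I would use $\tilde{V}_{SL}(R) \cong W_{SL}(R)$ together with the free-ification comparison of Section \ref{2.3} to present an arbitrary class as $[P_0 \oplus R^{2n-2}, \chi_0 \perp \psi_{2n-2}, \gamma]$ for some non-degenerate alternating $\gamma$ (putting $\chi_0$ into the base by adding the trivial hyperbolic datum on $P_0 \oplus P_0^{\vee}$). Since $\gamma$ is non-degenerate, I can pick a unimodular $v$ together with $w$ satisfying $\gamma(v,w)=1$; the plane they span carries $\psi_2$ and splits off orthogonally, yielding $(P_0 \oplus R^{2n-2}, \gamma) \cong (W, \gamma') \perp (R^2, \psi_2)$ with $W$ a stably free complement of rank $2n-2$. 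Using the defining relations of $V_{SL}(R)$ and Lemma \ref{L2.1} to absorb the determinant of the splitting isometry, this lowers the rank by two whenever $W$ is isomorphic to the expected standard module. For $2n-2 > \dim R$ this is guaranteed by Bass cancellation \cite[Chapter IV, Theorem 3.4]{HB}, so the reductions proceed freely down to the module $P_0 \oplus R^4$ of rank $6$.

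The critical step is the passage from rank $6$ to rank $4$, where the complement $W$ has rank $4 = \dim R$ and its triviality is no longer automatic. Here $W \oplus R^2 \cong P_0 \oplus R^4$; cancelling one free summand (permissible since the resulting modules have rank $5 > \dim R$) gives $W \oplus R \cong P_0 \oplus R^3 = P_5$. Thus $W = \ker(a)$ for some $a \in Um(P_5)$, and the hypothesis that $SL(P_5)$ acts transitively on $Um(P_5)$ forces $a$ into the orbit of the projection onto $Re_5$, so that $W \cong P_0 \oplus R^2$ is standard. This completes the reduction to a rank-$4$ representative $[P_0 \oplus R^2, \chi_0 \perp \psi_2, \gamma'']$, where $\gamma''$ is non-degenerate alternating with $\mathit{Pf}(\gamma'') = \mathit{Pf}(\chi_0 \perp \psi_2)$.

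For the realization stage I would recover a unimodular element from the rank-$4$ form: writing $P_0 \oplus R^2 = (P_0 \oplus R) \oplus R$ and letting $e$ denote the last free generator, the functional $a = \gamma''(e, -)\colon P_0 \oplus R \rightarrow R$ is unimodular because $\gamma''$ is non-degenerate and satisfies the Pfaffian condition — this is the coordinate-free form of the classical fact that the Pfaffian of a $4 \times 4$ alternating matrix is the pairing of a unimodular row with a section. Running the construction of the generalized Vaserstein symbol of \cite{Sy} in reverse, identifying $\ker(a)$ with its induced form $\chi_a$ and absorbing the residual isometry via Lemma \ref{L2.1}, then shows $V(a) = [P_0 \oplus R^2, \chi_0 \perp \psi_2, \gamma'']$. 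I expect the main obstacle to be precisely the critical reduction step: without the transitivity hypothesis the rank-$4$ complement $W$ could be a non-trivial stably free module, so the class would admit no rank-$4$ representative at all. The secondary technical point is to keep every splitting isometry in $SL$, so that Lemma \ref{L2.1} remains applicable at each stage.
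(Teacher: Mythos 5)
Your overall architecture matches the paper's: reduce an arbitrary class to a rank-$4$ representative $[P_0 \oplus R^2, \chi_0 \perp \psi_2, \gamma'']$, then realize any such representative as a value of $V$ (this realization is exactly \cite[Lemma 4.4]{Sy}, which the paper cites). Your treatment of the critical descent is also sound and is the same use of the hypothesis in a different guise: the paper normalizes the epimorphism $\chi(-,e_6) \in Um(P_5)$ to $\pi_{5,5}$ by an element of $SL(P_5)$ and then splits off $\psi_2$ with an explicit elementary automorphism, whereas you split off the hyperbolic plane abstractly and use $SL(P_5)$-transitivity, via $W \oplus R \cong P_5$, to identify the rank-$4$ complement with $P_0 \oplus R^2$. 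Your Bass-cancellation argument for the higher-rank descents is fine, as is the determinant bookkeeping --- though what makes the absorption work is not Lemma \ref{L2.1} but the fact that $P_0 \oplus R^{2k}$ has a free summand and hence admits automorphisms of any prescribed unit determinant, so the identification of $W$ with $P_0\oplus R^{2k}$ can be recalibrated to make the splitting isometry lie in $SL$.

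The genuine gap is your very first step. Adding the trivial class $[P_0 \oplus P_0^{\vee}, H_{P_0}, H_{P_0}] = 0$ to a free representative $[R^{2m}, \psi_{2m}, M]$ yields the single triple $(R^{2m} \oplus P_0 \oplus P_0^{\vee},\, \psi_{2m} \perp H_{P_0},\, M \perp H_{P_0})$, and to rewrite this as a triple with base form $\chi_0 \perp \psi_{2m+2}$ on $P_0 \oplus R^{2m+2}$ you need an isometry of triples carrying $\psi_{2m} \perp H_{P_0}$ to $\chi_0 \perp \psi_{2m+2}$; in particular the underlying modules must be isomorphic. Since $P_0^{\vee} \cong P_0$ via $\chi_0$, this forces $P_0 \oplus P_0 \oplus R^{2m} \cong P_0 \oplus R^{2m+2}$, hence, cancelling by \cite[Chapter IV, Theorem 3.4]{HB} (the module cancelled from has rank $2m+2 > \dim R$), that $P_0$ is stably free --- and even then $H_{P_0}$ is isometric to $\chi_0 \perp (-\chi_0)$, not to $\chi_0 \perp \psi_2$, and $[P_0, -\chi_0] - [R^2, \psi_2]$ need not vanish in $K_0 Sp(R)$. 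Since the theorem concerns precisely those $P_0$ which may fail to be stably free, the hyperbolic datum cannot ``put $\chi_0$ into the base.'' The presentation you want is true, but it is the content of \cite[Lemma 2.10]{Sy}, which the paper simply invokes; to obtain it along your lines, replace $H_{P_0}$ by a symplectic complement as in \cite[Lemma 4.8]{Sy}: choose $(P_1, \chi')$ with an isometry $(P_0, \chi_0) \perp (P_1, \chi') \cong (R^{2k}, \psi_{2k})$, add the two trivial classes $[P_0 \oplus P_1, \chi_0 \perp \chi', \chi_0 \perp \chi']$ and $[P_0, \chi_0, \chi_0]$, and rewrite $(P_1 \oplus P_0, \chi' \perp \chi_0)$ as $(R^{2k}, \psi_{2k})$ by that isometry; this produces a representative $[P_0 \oplus R^{2k+2m},\, \chi_0 \perp \psi_{2k+2m},\, \chi_0 \perp \psi_{2k} \perp M]$, after which your descent argument applies verbatim.
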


\begin{proof}
Let $\beta \in \tilde{V}_{SL} (R)$. Since $\dim (R) \leq 4$, we know that $Um (P_{n}) = \pi_{n,n} E (P_{n})$ for all $n \geq 6$. Therefore every element in $\tilde{V} (R)$ is of the form $[P_{6}, \chi_{0} \perp \psi_{4}, \chi]$ for some non-degenerate alternating form $\chi$ on $P_{6}$ (cp. \cite[Lemma 2.10]{Sy}); hence the same holds for any element in $\tilde{V}_{SL} (R)$. Consequently, we can write $\beta = [P_{6}, \chi_{0} \perp \psi_{4}, \chi]$.\\
Now let $d = \chi(-, e_{6}): P_{5} \rightarrow R$. Since $d$ can be locally checked to be an epimorphism, there is an automorphism ${\varphi} \in SL (P_{5})$ such that ${d} {\varphi} = \pi_{5, 5}$. Then the alternating form ${\chi'} = {(\varphi \oplus 1)}^{t} \chi  ({\varphi} \oplus 1)$ satisfies that ${\chi'} (-, e_{6}) : P_{5} \rightarrow R$ is just $\pi_{5, 5}$. Now we simply define $c = {\chi'} (-, e_{5}): P_{5} \rightarrow R$ and let ${\varphi}_{c} = id_{P_{6}} + c e_{6}$ be the elementary automorphism on $P_{6}$ induced by $c$; then ${{\varphi}_{c}}^{t} {\chi'} {\varphi}_{c} = \psi \perp \psi_{2}$ for some non-degenerate alternating form $\psi$ on $P_{4}$. Since all the isometries we used have determinant $1$, we conclude that $\beta = [P_{4}, \chi_{0} \perp \psi_{2}, \psi]$. As any element of this form lies in the image of the generalized Vaserstein symbol (cp. \cite[Lemma 4.4]{Sy}), this proves the theorem.
\end{proof}

We remark that the assumption in the last theorem is satisfied if $R$ is an algebra of dimension $\leq 4$ over an infinite perfect field $k$ of cohomological dimension $\leq 1$ with $6 \in k^{\times}$ (cp. \cite{S1}, \cite{S4} and \cite{B}) or if $R$ is a Noetherian ring of dimension $\leq 3$ (\cite[Chapter IV, Corollary 3.5]{HB}).\\
Now let us study the fibers of the map $V: Um (P_{0} \oplus R)/E (P_{0} \oplus R) \rightarrow \tilde{V}_{SL} (R)$. For this, we will now describe an action of $SL (P_{4})$ on $Um (P_{0} \oplus R)/E (P_{0} \oplus R)$.\\
First of all, note that $E (P_{4})$ is a normal subgroup of $SL (P_{4})$: for if we let $\varphi \in SL (P_{4})$ and $\varphi' \in E (P_{4})$, then there is a natural isotopy (cp. \cite[Definition before Theorem 2.14]{Sy}) from $id_{P_{4}}$ to ${\varphi}^{-1} \varphi' \varphi$. By \cite[Theorem 2.12]{Sy} and Suslin's normality theorem (cp. \cite{S3}), it follows that ${\varphi}^{-1} \varphi' \varphi \in E (P_{4})$.\\
Now let $\varphi \in SL (P_{4})$ and $a \in Um (P_{0} \oplus R)$. We choose a section $s: R \rightarrow P_{0} \oplus R$ of $a$ and obtain a non-degenerate alternating form

\begin{center}
$V (a,s) = {(i_{s} \oplus 1)}^{t} (\chi_a \perp \psi_{2}) (i_{s} \oplus 1)$
\end{center}

as in the definition of the generalized Vaserstein symbol. Then we consider the alternating form $\varphi^{t} V (a,s) \varphi$. By abuse of notation, we also denote by $a$ the class of $a$ in $Um (P_{0} \oplus R)/E (P_{0} \oplus R)$ and define $a \cdot \varphi$ to be the class in $Um (P_{0} \oplus R)/E (P_{0} \oplus R)$ represented by $\varphi^{t} V (a,s) \varphi (-, e_{4}): P_{0} \oplus R \rightarrow R$.\\
Now let us show that this assignment gives a well-defined right action of $SL (P_{4})$ on $Um (P_{0} \oplus R)/E (P_{0} \oplus R)$: If we choose another section $s'$ of $a$, then there is $\varphi' \in E (P_{4})$ such that $\varphi' V (a,s') \varphi' = V (a,s)$ (cp. the proof of \cite[Theorem 4.1]{Sy}). Since $E (P_{4})$ is a normal subgroup of $SL (P_{4})$, it follows that

\begin{center}
${(\varphi)}^{t} V (a,s) \varphi = {(\varphi'')}^{t} {(\varphi)}^{t} V (a,s') \varphi \varphi''$ 
\end{center}

for some $\varphi'' \in E (P_{4})$. The lemma below will hence imply that our assignment does not depend on the choice of the section $s$ of $a$.\\
Similarly, if $a' = a \varphi'$ for $\varphi' \in E (P_{0} \oplus R)$, then $V (a',s') = {(\varphi' \oplus 1)}^{t} V (a,s) (\varphi' \oplus 1)$, where $s' = {(\varphi')}^{-1} s$ (this follows from the proof of \cite[Theorem 4.3]{Sy}). Again, since $E (P_{4})$ is normal in $SL (P_{4})$, it follows that

\begin{center}
${(\varphi)}^{t} V (a,s) \varphi = {(\varphi'')}^{t} {(\varphi)}^{t} V (a',s') \varphi \varphi''$
\end{center}

for some $\varphi'' \in E (P_{4})$. The following lemma then also implies that our assignment does only depend on the class of $a$ in $Um (P_{0} \oplus R)/E (P_{0} \oplus R)$.

\begin{Lem}
Let $\chi, \chi'$ be non-degenerate alternating forms on $P_{4}$ and, moreover, let $a = \chi (-,e_{4}), a' = \chi' (-,e_{4}) \in Um (P_{0} \oplus R)$. If ${\varphi}^{t} \chi \varphi = \chi'$ for some $\varphi \in E (P_{4})$, then the classes of $a$ and $a'$ coincide in $Um (P_{0} \oplus R)/E (P_{0} \oplus R)$.
\end{Lem}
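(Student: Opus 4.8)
The plan is to reduce to the case where $\varphi$ is a single elementary generator of $E(P_4) = E(P_0, Re_3, Re_4)$ and then to track how such a generator changes the associated row $\chi(-,e_4)$. First I would telescope the congruence: writing $\varphi = \varepsilon_1\cdots\varepsilon_m$ as a product of elementary generators and setting $\chi_0 = \chi$, $\chi_j = \varepsilon_j^t \chi_{j-1}\varepsilon_j$, each $\chi_j$ is again a non-degenerate alternating form with $\chi_j(e_4,e_4) = 0$, so that $a_j := \chi_j(-,e_4)$ descends to a unimodular element of $P_0 \oplus R$. Since $a_0 = a$ and $a_m = a'$, it suffices to treat one generator $\varepsilon = id_{P_4} + s$, where $s\colon P_k \to P_l$ with $k \neq l$, and to show that $a_1$ and $a_0$ lie in the same $E(P_0 \oplus R)$-orbit.

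Next I would split into cases according to the source $P_k$. If $P_k \neq Re_4$, then $\varepsilon(e_4) = e_4$, and for $p \in P_3$ one computes $a_1(p) = \chi(\varepsilon p, e_4) = a(p) + \chi(s(p), e_4)$. If the target $P_l$ is $Re_4$, this correction term vanishes since $\chi(e_4,e_4) = 0$, so $a_1 = a$; if the target lies in $P_3$, then $\varepsilon$ restricts to an elementary automorphism $\bar\varepsilon \in E(P_3) = E(P_0 \oplus R)$ and $a_1 = a \circ \bar\varepsilon$. In both subcases $a_1$ and $a$ represent the same class.

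The remaining and genuinely delicate case is $P_k = Re_4$, say $w := s(e_4) \in P_3$. Here $\varepsilon$ fixes $P_3$ pointwise, so $a_1(p) = \chi(p, e_4 + w) = a(p) + \delta(p)$ with $\delta := \chi(-,w)|_{P_3}$. The idea is to exploit the non-degeneracy of $\chi$: let $v \in P_4$ be the unique element with $\chi(v,x) = \mathit{Pf}(\chi)\,\pi_{4,4}(x)$ for all $x$. Since $\chi^{-1}$ is again alternating one checks $\pi_{4,4}(v) = 0$, i.e.\ $v \in P_3$, and $a(v) = \chi(v,e_4) = \mathit{Pf}(\chi) \in R^\times$, so $v$ is a unimodular section of $a$. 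Because $w \in P_3 = \ker \pi_{4,4}$, we get the crucial identity $\delta(v) = \chi(v,w) = \mathit{Pf}(\chi)\,\pi_{4,4}(w) = 0$. Hence the transvection $\tau := id_{P_3} + \mathit{Pf}(\chi)^{-1} v\,\delta$, sending $p \mapsto p + \mathit{Pf}(\chi)^{-1}\delta(p)\,v$, lies in $SL(P_3)$ and satisfies $a \circ \tau = a + \mathit{Pf}(\chi)^{-1}a(v)\,\delta = a + \delta = a_1$.

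It then remains to argue that $\tau$ is elementary, and this is where the main work sits: $\tau$ is a transvection along the unimodular direction $v$ with $\delta(v) = 0$, and such transvections belong to the elementary subgroup. For free $P_0$ this is the classical fact that $I + v\,q^t$ with $v$ unimodular and $qv^t = 0$ lies in $E_3(R)$, where one also uses the Steinberg commutator relation to see that the block group $E(P_0, Re_3)$ already coincides with $E_3(R)$; for a general projective $P_0$ I would invoke the corresponding description of $E(P_3)$ together with Suslin's normality theorem as developed in \cite{Sy}. Assembling the three cases and telescoping back along $\varphi = \varepsilon_1 \cdots \varepsilon_m$ yields $a \sim a'$ in $Um(P_0 \oplus R)/E(P_0 \oplus R)$. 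I expect the last case to be the principal obstacle: the point is to recognize, via the cofactor/section vector $v$ furnished by non-degeneracy, that the change in the row is precisely a transvection along a unimodular direction, and to verify that this transvection is genuinely elementary in the block sense rather than merely of determinant $1$.
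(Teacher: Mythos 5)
Your telescoping reduction and your first two cases are correct and essentially coincide with the paper's treatment, and your construction in the hard case is also correct as far as it goes: taking $v$ with $\chi(v,-) = \pi_{4,4}$ (the factor $\mathit{Pf}(\chi)$ is both unnecessary and undefined here --- the paper only defines Pfaffians of alternating \emph{matrices}, not of forms on projective modules; plain non-degeneracy already produces $v$), one indeed gets $v \in P_{3}$, $a(v) = 1$, $\delta(v) = 0$ and $a_{1} = a\tau$ for the transvection $\tau = id_{P_{3}} + v\delta$. The genuine gap is the step you yourself flag as the principal obstacle: for non-free $P_{0}$ you must show $\tau \in E(P_{3}) = E(P_{0}, Re_{3})$, and the justification you propose is not available. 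Suslin's normality theorem (and its extension in \cite{Sy}) asserts that \emph{conjugates} of elementary automorphisms are elementary; it says nothing about transvections being elementary. The only statement of the latter kind that \cite{Sy} supplies is Lemma 2.6, which treats automorphisms $(id_{P_{0}} + q\mu)\oplus id_{R}$ with $\mu \circ q = 0$, i.e. transvections whose functional factors through $\pi_{P_{0}}$, so that the block matrix over $P_{0}\oplus Re_{3}$ is triangular. Your functional $\delta = \chi(-,w)\vert_{P_{3}}$ fails this exactly when $w$ has a nonzero $P_{0}$-component --- that is, for the generator $s\colon Re_{4}\to P_{0}$, which your case division includes --- since then $\delta(e_{3}) = \chi(e_{3},w)$ need not vanish and $\tau$ has all four blocks nonzero. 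The statement you actually need (every transvection $id + v\delta$ with $v$ unimodular and $\delta(v)=0$ on the rank-$3$ module $P_{0}\oplus R$ lies in the block elementary subgroup) is true, but it is the local-global theorem of Bak--Basu--Rao on transvection groups, a substantially heavier input than anything used in this paper, and you would have to quote or reprove it.

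The gap can be closed with the paper's own tools, and the repair explains why the paper structures its proof as it does. First reduce to the four generator types $P_{0} \to Re_{3}$, $Re_{3} \to P_{0}$, $Re_{3} \to Re_{4}$, $Re_{4} \to Re_{3}$ (this is the generation claim with which the paper's proof opens): a generator $id_{P_{4}}+s$ with $s\colon Re_{4}\to P_{0}$ is the Steinberg commutator $[\,id_{P_{4}}+s'',\, id_{P_{4}}+s'\,]$, where $s'\colon Re_{4}\to Re_{3}$, $e_{4}\mapsto e_{3}$, and $s''\colon Re_{3}\to P_{0}$, $e_{3}\mapsto s(e_{4})$, because $s''s' = s$ and $s's'' = 0$. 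After this reduction your hard case has $w = \lambda e_{3}$, whence $\delta(e_{3}) = \lambda\chi(e_{3},e_{3}) = 0$; writing $v = v_{0} + v_{R}e_{3}$ and $\mu = \delta\vert_{P_{0}}$, the transvection becomes block lower-triangular and factors as
\[
\tau \;=\; \begin{pmatrix} id_{P_{0}}+v_{0}\mu & 0 \\ v_{R}\mu & 1 \end{pmatrix} \;=\; \begin{pmatrix} id_{P_{0}} & 0 \\ v_{R}\mu & 1 \end{pmatrix}\begin{pmatrix} id_{P_{0}}+v_{0}\mu & 0 \\ 0 & 1 \end{pmatrix},
\]
where $\mu(v_{0}) = \delta(v) = 0$; the first factor is an elementary generator and the second lies in $E(P_{0}\oplus R)$ by \cite[Lemma 2.6]{Sy}. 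With this amendment your argument is complete, and it is then a genuinely different route from the paper's, which never invokes transvections in general position: it instead writes $\chi = V(a,s)$ using \cite[Lemma 4.4]{Sy}, computes the perturbed row by a local calculation, and builds its elementary automorphism from the section $s$ rather than from $\chi^{-1}(\pi_{4,4})$. Your variant, once repaired, is arguably cleaner, since non-degeneracy of $\chi$ replaces the appeal to the Vaserstein-form presentation of $\chi$.
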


\begin{proof}
First of all, the group $E (P_{4})$ is generated by elementary automorphisms $\varphi_{g} = id_{P_{4}} + g$, where $g$ is a homomorphism

\begin{itemize}
\item[1)] $g: R e_{3} \rightarrow P_{0}$,
\item[2)] $g: P_{0} \rightarrow R e_{3}$,
\item[3)] $g: R e_{3} \rightarrow R e_{4}$ or
\item[4)] $g: R e_{4} \rightarrow R e_{3}$.
\end{itemize}

Furthermore, we can write $\chi = V (a,s)$ and $\chi' = V (a', s')$ for sections $s$ and $s'$ of $a$ and $a'$ respectively (cp. the proof of \cite[Lemma 4.4]{Sy}). Hence it suffices to show the following: If $\varphi_{g}^{t} V (a,s) \varphi_{g} = V (a',s')$ for some $g$ as above, then $a' = a \psi$ for some $\psi \in E (P_{0} \oplus R)$. The only non-trivial case is the last one, i.e. if $g$ is a homomorphism $R e_{4} \rightarrow R e_{3}$.\\
For this, we let $g: R e_{4} \rightarrow R e_{3}$ and let $\varphi_{g}$ be the induced elementary automorphism of $P_{4}$ and we assume that

\begin{center}
$\varphi_{g}^{t} V (a,s) \varphi_{g} = V (a', s')$
\end{center}

for some epimorphism $a': P_{0} \oplus R e_{3} \rightarrow R$ with section $s'$. We then write $a$ as $a = (a_{0}, a_{R})$, where $a_{0}$ is the restriction of $a$ to $P_{0}$ and $a_{R} = a (e_{3})$. Moreover, we define $p = \pi_{P_{0}} (s (1))$. From now on, we interpret the alternating form $\chi_{0}$ in the definition of the generalized Vaserstein symbol as an alternating isomorphism $\chi_{0}: P \rightarrow P^{\vee}$. One can verify locally that

\begin{center}
$a' = (a_{0} - g(1) \cdot \chi_{0} (p), a_{R})$.
\end{center}

Then let us define an automorphism $\psi$ of $P_{3}$ as follows: We first define an endomorphism of $P_{0}$ by 

\begin{center}
$\psi_{0} = id_{P_{0}} - g(1) \cdot \pi_{P_{0}} \circ s \circ \chi_{0} (p) : P_{0} \rightarrow P_{0}$
\end{center}
 
and we also define a morphism $P_{0} \rightarrow R e_{3}$ by
 
\begin{center}
$\psi_{R} = - g(1) \cdot \pi_{R} \circ s \circ \chi_{0} (p): P_{0} \rightarrow R$.
\end{center}

Then we consider the endomorphism of $P_{0} \oplus R$ given by
 
\begin{center}
$\psi =
\begin{pmatrix}
\psi_{0} & 0 \\
\psi_{R} & id_R
\end{pmatrix}
$.
\end{center}

First of all, this endomorphism coincides up to an elementary automorphism with

\begin{center}
$
\begin{pmatrix}
\psi_{0} & 0 \\
0 & id_R
\end{pmatrix}
$.
\end{center}

Since $\chi_{0} (p) \circ \pi_{P_{0}} \circ s = 0$, this endomorphism is an element of $E (P_{0} \oplus R)$ by \cite[Lemma 2.6]{Sy}. Hence the same holds for $\psi$. Finally, one can check easily that $a \psi = a'$ by construction.
\end{proof}

As indicated above, the previous lemma shows that our previous assignment gives a well-defined map

\begin{center}
$Um (P_{0} \oplus R)/E (P_{0} \oplus R) \times SL (P_{4}) \xrightarrow{-\cdot-} Um (P_{0} \oplus R)/E (P_{0} \oplus R)$.
\end{center}

Note that if $a \in Um (P_{0} \oplus R)$ with section $s$ and $\varphi \in SL (P_{4})$, then it follows from the proof of \cite[Lemma 4.4]{Sy} that the alternating form ${\varphi}^{t} V (a,s) \varphi$ equals $V (a \cdot \varphi, s')$ for some section $s'$ of $a \cdot \varphi$. It follows that the map above is indeed a right action of $SL (P_{4})$ on $Um (P_{0} \oplus R)/E (P_{0} \oplus R)$. In fact, the previous lemma shows that this action descends to a well-defined action of $SL (P_{4})/E (P_{4})$ on $Um (P_{0} \oplus R)/E (P_{0} \oplus R)$.

\begin{Lem} \label{L3.4}
Let $\chi_{1}$ and $\chi_{2}$ be non-degenerate alternating forms on $P_{2n}$ such that ${\varphi}^{t} (\chi_{1} \perp \psi_{2}) \varphi = \chi_{2} \perp \psi_{2}$ for some $\varphi \in SL (P_{2n+2})$. Furthermore, let $\chi = \chi_{1} \perp \psi_{2}$. If $SL (P_{2n+2}) e_{2n+2} = {Sp} (\chi) e_{2n+2}$ holds, then one has ${\psi}^{t} \chi_{2} \psi = \chi_{1}$ for some $\psi \in SL (P_{2n})$.
\end{Lem}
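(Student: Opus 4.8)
The plan is to cancel the hyperbolic plane carrying $\psi_{2}$ from the isometry $\varphi$, using the orbit hypothesis as the substitute for Witt cancellation that is unavailable over a non-local base. Throughout I write $P_{2n+2} = P_{2n} \oplus Re_{2n+1} \oplus Re_{2n+2}$ and $\chi = \chi_{1} \perp \psi_{2}$, so that $\varphi$ is an isometry from $(P_{2n+2}, \chi_{2} \perp \psi_{2})$ to $(P_{2n+2}, \chi)$.

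First I would normalize the image of $e_{2n+2}$. Since $\varphi \in SL(P_{2n+2})$, the unimodular element $\varphi(e_{2n+2})$ lies in $SL(P_{2n+2}) e_{2n+2}$, which by hypothesis equals $Sp(\chi) e_{2n+2}$; hence there is $\sigma \in Sp(\chi)$ with $\sigma(e_{2n+2}) = \varphi(e_{2n+2})$. I then replace $\varphi$ by $\sigma^{-1}\varphi$. Because $\sigma^{t}\chi\sigma = \chi$ and $\det(\sigma) = 1$ (the Pfaffian formula $\mathit{Pf}(\sigma^{t}\chi\sigma) = \det(\sigma)\mathit{Pf}(\chi)$ together with $\mathit{Pf}(\chi) \in R^{\times}$ forces this), the modified $\varphi$ still lies in $SL(P_{2n+2})$, still satisfies $\varphi^{t}\chi\varphi = \chi_{2}\perp\psi_{2}$, and now fixes $e_{2n+2}$. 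Next I set $W := \ker(\chi(-,e_{2n+2})) = P_{2n}\oplus Re_{2n+2}$. Since $\psi_{2}(e_{2n+1},e_{2n+2}) = 1$ and $e_{2n+2}$ is orthogonal to $P_{2n}$ for both forms, the functionals $\chi(-,e_{2n+2})$ and $(\chi_{2}\perp\psi_{2})(-,e_{2n+2})$ both detect the $e_{2n+1}$-coordinate; as $\varphi$ is an isometry fixing $e_{2n+2}$, the identity $\chi(\varphi(w),e_{2n+2}) = (\chi_{2}\perp\psi_{2})(w,e_{2n+2})$ shows $\varphi(W)\subseteq W$, and the same argument for $\varphi^{-1}$ gives $\varphi(W) = W$. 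Moreover $e_{2n+2}$ spans the radical of both $\chi|_{W} = \chi_{1}\oplus 0$ and $(\chi_{2}\perp\psi_{2})|_{W} = \chi_{2}\oplus 0$.

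Writing $\varphi|_{W}$ in block form with respect to $W = P_{2n}\oplus Re_{2n+2}$, the condition $\varphi(e_{2n+2}) = e_{2n+2}$ forces $\varphi|_{W} = \left(\begin{smallmatrix} A & 0 \\ B & 1\end{smallmatrix}\right)$ with $A \in \operatorname{End}(P_{2n})$ and $B : P_{2n}\to R$. Because $e_{2n+2}$ lies in the radical, evaluating the isometry condition on $p,q \in P_{2n}$ collapses to $\chi_{1}(Ap,Aq) = \chi_{2}(p,q)$, that is $A^{t}\chi_{1}A = \chi_{2}$. To see that $A$ is invertible of determinant $1$, I note that $\chi(\varphi(e_{2n+1}),e_{2n+2}) = (\chi_{2}\perp\psi_{2})(e_{2n+1},e_{2n+2}) = 1$, so $\varphi(e_{2n+1})$ has $e_{2n+1}$-coordinate $1$ and $\varphi$ acts as the identity on $P_{2n+2}/W \cong R$; multiplicativity of the determinant along the split exact sequence $0 \to W \to P_{2n+2}\to P_{2n+2}/W \to 0$ then yields $\det(A) = \det(\varphi|_{W}) = \det(\varphi) = 1$. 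Hence $A \in SL(P_{2n})$, and $\psi := A^{-1} \in SL(P_{2n})$ satisfies $\psi^{t}\chi_{2}\psi = \chi_{1}$, as required.

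The main obstacle is the normalization in the second paragraph: the orbit hypothesis $SL(P_{2n+2})e_{2n+2} = Sp(\chi)e_{2n+2}$ is precisely the input needed to move $\varphi(e_{2n+2})$ back to $e_{2n+2}$ by a symplectic correction, which over a ring that is not local cannot be arranged by general position. Once $\varphi$ fixes $e_{2n+2}$, the rest is bookkeeping with the degenerate restriction $\chi|_{W}$; the one genuinely delicate point there is verifying that $A$ has determinant exactly $1$ rather than merely a unit, which I expect to hinge on $\varphi$ inducing the identity on the rank-one quotient $P_{2n+2}/W$.
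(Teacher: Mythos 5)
Your proof is correct and follows essentially the same route as the paper's: first use the hypothesis $SL(P_{2n+2})e_{2n+2} = Sp(\chi)e_{2n+2}$ to replace $\varphi$ by a determinant-one isometry fixing $e_{2n+2}$, then extract the $P_{2n}$-block, which is an isometry between $\chi_{1}$ and $\chi_{2}$ of determinant $1$. The only cosmetic difference is that the paper phrases the determinant computation as ``$\psi$ equals $\psi'$ up to elementary morphisms'' where you use multiplicativity of $\det$ along the filtration $W \subset P_{2n+2}$ with trivial induced action on the quotient --- these are the same argument.
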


\begin{proof}
Let ${\psi''} e_{2n+2} = \varphi e_{2n+2}$ for some ${\psi''} \in {Sp} (\chi)$. Then we set ${\psi'} = {(\psi'')}^{-1} \varphi$. Since ${(\psi')}^{t} (\chi_{1} \perp \psi_{2}) {\psi'} = \chi_{2} \perp \psi_{2}$, the composite $\psi: P_{2n} \xrightarrow{\psi'} P_{2n+2} \rightarrow P_{2n}$ and $\psi'$ satisfy the following conditions:

\begin{itemize}
\item ${\psi'} (e_{2n+2}) = e_{2n+2}$;
\item $\pi_{2n+1, 2n+2} {\psi'} = \pi_{2n+1, 2n+2}$;
\item ${\psi}^{t} \chi_{1} \psi = \chi_{2}$.
\end{itemize}
 
These conditions imply that $\psi$ equals ${\psi'}$ up to elementary morphisms of $P_{2n+2}$ and hence has determinant $1$ as well. This finishes the proof.
\end{proof}

\begin{Thm}\label{T3.5}
Let $a,a' \in Um (P_{0} \oplus R)$. Then $V (a) = V (a')$ in $\tilde{V}_{SL} (R)$ if and only if $a \cdot \varphi = a'$ in $Um (P_{0} \oplus R)/E (P_{0} \oplus R)$ for some $\varphi \in SL (P_{4})$.
\end{Thm}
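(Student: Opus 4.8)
The plan is to prove the two implications separately: the implication $a \cdot \varphi = a' \Rightarrow V(a) = V(a')$ is essentially formal and reduces to the defining relations of $V_{SL}(R)$, whereas the converse is the substantial direction and rests on a destabilization argument combining Lemma \ref{L2.1} and Lemma \ref{L3.4}.

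For the easy direction, suppose $a \cdot \varphi = a'$ in $Um(P_0 \oplus R)/E(P_0 \oplus R)$ for some $\varphi \in SL(P_4)$. Since the generalized Vaserstein symbol is already well-defined on $Um(P_0 \oplus R)/E(P_0 \oplus R)$, I immediately get $V(a') = V(a \cdot \varphi)$. I would then fix a section $s$ of $a$ and use the fact recorded just before Lemma \ref{L3.4} that $\varphi^t V(a,s)\varphi = V(a \cdot \varphi, s')$ for a suitable section $s'$ of $a \cdot \varphi$, so that $V(a \cdot \varphi) = [P_4, \chi_0 \perp \psi_2, \varphi^t V(a,s)\varphi]$. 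Because $\varphi \in SL(P_4)$, the third defining relation of $V_{SL}(R)$, namely $[P, g, f] = [P, g, \varphi^\vee f \varphi]$ (with $\varphi^\vee$ identified with $\varphi^t$ on $P_4$), gives $[P_4, \chi_0 \perp \psi_2, \varphi^t V(a,s)\varphi] = [P_4, \chi_0 \perp \psi_2, V(a,s)] = V(a)$, which is precisely $V(a') = V(a)$.

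For the converse, I would start from $V(a) = V(a')$, that is, from the equality $[P_4, \chi_0 \perp \psi_2, V(a,s)] = [P_4, \chi_0 \perp \psi_2, V(a',s')]$ in $V_{SL}(R)$. Applying Lemma \ref{L2.1} with $\chi = \chi_0 \perp \psi_2$, $\chi_1 = V(a,s)$ and $\chi_2 = V(a',s')$ produces an integer $n$ and an automorphism $\alpha \in SL(P_{2n+4})$ with $\alpha^t(V(a,s) \perp \psi_{2n})\alpha = V(a',s') \perp \psi_{2n}$. The heart of the argument is then to peel off the hyperbolic summands $\psi_{2n}$ one plane at a time by iterating Lemma \ref{L3.4}, each step lowering the rank by two and carrying the isometry from $P_{2m+2}$ down to $P_{2m}$, until I arrive at some $\varphi \in SL(P_4)$ with $\varphi^t V(a,s)\varphi = V(a',s')$ (after possibly replacing $\varphi$ by its inverse to compensate for the swap of $\chi_1,\chi_2$ in the conclusion of Lemma \ref{L3.4}). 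Finally I would evaluate both sides at $e_4$: using the identity $V(a',s')(-,e_4) = a'$ and the definition of the action, the class $a \cdot \varphi$ is represented by $\varphi^t V(a,s)\varphi(-,e_4) = a'$, so $a \cdot \varphi = a'$ in $Um(P_0 \oplus R)/E(P_0 \oplus R)$.

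The main obstacle is supplying, at each stage of the destabilization, the transitivity hypothesis required by Lemma \ref{L3.4}, namely $SL(P_{2m+2}) e_{2m+2} = Sp(\chi) e_{2m+2}$. Since $\dim R \le 4$, linear transitivity $SL(P_{2m+2}) e_{2m+2} = Um(P_{2m+2})$ is unproblematic for the ranks $2m+2 \ge 6$ that occur, so the condition amounts to transitivity of the symplectic group $Sp(\chi)$ on unimodular elements of $P_{2m+2}$. The cancellations run through the ranks $2n+4, \dots, 8, 6$, and the delicate case is the final one at rank $6$ (which equals $\dim R + 2$), sitting exactly at the boundary of the symplectic stability range over a four-dimensional ring; I would discharge it by invoking the appropriate symplectic transitivity (injective-stability) result for the groups $Sp$ from the literature rather than by a direct computation.
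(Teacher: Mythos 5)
Your proposal is correct and follows essentially the same route as the paper: the easy direction via the defining relation of $V_{SL}(R)$ and the compatibility $\varphi^{t} V(a,s)\varphi = V(a\cdot\varphi, s')$, and the hard direction via Lemma \ref{L2.1} followed by iterated applications of Lemma \ref{L3.4}. The transitivity input you defer to the literature, including the boundary case at rank $6 = \dim R + 2$, is exactly what the paper supplies: Bass stability gives $E(P_{n})e_{n} = Um(P_{n})$ for $n \geq 6$ over a ring of dimension $\leq 4$, and \cite[Lemma 2.8]{Sy} upgrades this to $(E(P_{2n}) \cap Sp(\chi))e_{2n} = Um(P_{2n})$ for all non-degenerate alternating forms $\chi$, so the hypothesis $SL(P_{2m+2})e_{2m+2} = Sp(\chi)e_{2m+2}$ of Lemma \ref{L3.4} holds at every stage of your destabilization.
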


\begin{proof}
We let $s,s': R \rightarrow P_{0} \oplus R$ be sections of $a$ and $a'$ and $V (a,s)$ and $V (a',s')$ be the alternating forms induced by $s$ and $s'$, which appear in the definition of the Vaserstein symbol. Now assume that $V (a) = V (a')$. Since by assumption $\dim (R) \leq 4$, we know that $E (P_{n}) e_{n} = Um (P_{n})$ for all $n \geq 6$. In particular, one has $(E (P_{2n}) \cap Sp (\chi)) e_{2n} = Um (P_{2n})$ for all $n \geq 3$ and all non-degenerate alternating forms on $P_{2n}$ (cp. \cite[Lemma 2.8]{Sy}). Hence we can apply Lemma \ref{L2.1} and Lemma \ref{L3.4} in order to deduce that ${\varphi}^{t} V (a,s) \varphi = V (a',s')$ for some $\varphi \in SL (P_{4})$. By definition of the action of $SL (P_{4})$ on $Um (P_{0} \oplus R)/E (P_{0} \oplus R)$, this means that $a \cdot \varphi = a'$.\\
Conversely, assume that $a \cdot \varphi = a'$ for some $\varphi \in SL (P_{4})$. By definition, this means that ${\varphi}^{t} V (a,s) \varphi = V (a'',s'')$, where the class of $a'' \in Um (P_{0} \oplus R)$ coincides with the class of $a'$ in $Um (P_{0} \oplus R)/E (P_{0} \oplus R)$ and $s''$ is a section of $a''$. In particular, it follows from the proofs of \cite[Theorems 4.1 and 4.3]{Sy} that there exists $\psi \in E (P_{4})$ such that ${\psi}^{t} {\varphi}^{t} V (a,s) \varphi \psi = V (a', s')$. This clearly implies that $V (a) = V (a')$ in $\tilde{V}_{SL} (R)$.
\end{proof}

For any Noetherian ring $R$ of dimension $\leq 4$, we have established the following exact sequence of groups and pointed sets whenever $SL (P_{5})$ acts transitively on $Um (P_{5})$:

\begin{center}
$SL (P_{4}) \Rightarrow Um (P_{0} \oplus R)/E (P_{0} \oplus R) \xrightarrow{V} \tilde{V}_{SL} (R) \rightarrow 0$.
\end{center}

In this situation, we mean by exactness at $Um (P_{0} \oplus R)/E (P_{0} \oplus R)$ that two classes in $Um (P_{0} \oplus R)/E (P_{0} \oplus R)$ represented by $a, a' \in Um (P_{0} \oplus R)$ satisfy $V (a) = V (a')$ in $\tilde{V}_{SL} (R)$ if and only if $a \varphi = a'$ for some $\varphi \in SL (P_{4})$.\\
Furthermore, there is a well-defined right action of $SK_{1} (R)$ on $W_E (R) \cong \tilde{V}_{SL} (R)$ given by the following assignment: If $\varphi \in SL_{2n} (R)$ and $\theta \in A_{2n} (R)$ represent elements of $SK_{1} (R)$ and $W_E (R)$, then $\theta \cdot \varphi$ is represented by the class of ${\varphi}^{t} \theta \varphi$ in $W_{E} (R)$. This action is compatible with the right action introduced above: Following \cite[Chapter III, Lemma 1.6]{We}, any finitely generated projective $R$-module $Q$ such that $P_{0} \oplus Q \cong R^{n}$ for some $n>0$ induces a well-defined group homomorphism $SL (P_{4}) \rightarrow SL_{n+2} (R)$. This induces a well-defined map $SL (P_{4}) \rightarrow SK_{1} (R)$ independent of the choice of $Q$. In fact, the map descends to a well-defined group homomorphism $St: SL (P_{4})/E (P_{4}) \rightarrow SK_{1} (R)$. One can then check easily that the diagram

\begin{center}
$\begin{xy}
  \xymatrix{
      Um (P_{3})/E (P_{3}) \times SL (P_{4})/E (P_{4}) \ar[r] \ar[d]^{V \times St}    &   Um (P_{3})/E (P_{3}) \ar[d]^{V} \ar[r] & Um (P_{3})/SL (P_{3}) \ar[d]^{V} \\
      \tilde{V} (R) \times SK_{1} (R) \ar[r]             &   \tilde{V} (R) \ar[r] & \tilde{V}_{SL} (R)   
  }
\end{xy}$
\end{center}

is commutative.\\
As a consequence of the previous theorem, we obtain the following criterion for the injectivity of the map $V: Um (P_{0} \oplus R)/SL (P_{0} \oplus R) \rightarrow \tilde{V}_{SL} (R)$:

\begin{Thm}\label{SLInjective}
The map $V: Um (P_{0} \oplus R)/SL (P_{0} \oplus R) \rightarrow \tilde{V}_{SL} (R)$ is injective if and only $SL (P_{4}) e_{4} = {Sp} (\chi) e_{4}$ for all non-degenerate alternating forms $\chi$ on $P_{4}$ such that $[P_{4}, \chi_{0}\perp\psi_{2},\chi] \in \tilde{V} (R)$.
\end{Thm}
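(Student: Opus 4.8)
The plan is to use Theorem \ref{T3.5} to convert the question of injectivity into a statement purely about the $e_4$-orbits of $SL(P_4)$ and $Sp(\chi)$. By Theorem \ref{T3.5}, two classes in $Um(P_0\oplus R)/SL(P_0\oplus R)$ have the same image under $V$ precisely when they are linked by the $SL(P_4)$-action on $Um(P_0\oplus R)/E(P_0\oplus R)$; hence $V$ is injective if and only if this $SL(P_4)$-action becomes trivial after passing to $Um(P_0\oplus R)/SL(P_0\oplus R)$, i.e. if and only if $[a\cdot\varphi]_{SL}=[a]_{SL}$ for every $a\in Um(P_0\oplus R)$ and every $\varphi\in SL(P_4)$. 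The bridge to the orbit condition is the identity $a\cdot\varphi=[\,{\varphi}^{t}\chi\varphi(-,e_4)\,]$ with $\chi=V(a,s)$ and $a=\chi(-,e_4)$, together with the observation that $\chi=V(a,s)$ is always one of the admissible forms, since $[P_4,\chi_0\perp\psi_2,\chi]=V(a)\in\tilde V(R)$.

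For the direction assuming the orbit condition, I would argue as follows. Fix $a$ with $\chi=V(a,s)$ and let $\varphi\in SL(P_4)$. Since $\chi$ is admissible, $SL(P_4)e_4=Sp(\chi)e_4$, so I may write $\varphi e_4=\sigma e_4$ for some $\sigma\in Sp(\chi)$. Then $\psi:=\sigma^{-1}\varphi$ fixes $e_4$, so with respect to $P_4=P_3\oplus Re_4$ it is block lower triangular with lower-right entry $id_{Re_4}$ and an invertible upper-left block $\psi_{33}\in SL(P_3)$; moreover ${\varphi}^{t}\chi\varphi={\psi}^{t}\chi\psi$ because $\sigma$ is $\chi$-symplectic. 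A direct evaluation gives ${\varphi}^{t}\chi\varphi(-,e_4)=a\circ\psi_{33}$ as elements of $Um(P_0\oplus R)$, using that $a$ vanishes on $e_4$. Hence $a\cdot\varphi=[a\circ\psi_{33}]$ with $\psi_{33}\in SL(P_3)=SL(P_0\oplus R)$, so $[a\cdot\varphi]_{SL}=[a]_{SL}$ and $V$ is injective. This is the direction relevant for the applications.

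For the converse I assume $V$ injective and must produce the orbit condition for an arbitrary admissible $\chi$. I would first realize $\chi=V(a,s)$ with $a=\chi(-,e_4)$ (possible by \cite[Lemma 4.4]{Sy}). Given $\varphi\in SL(P_4)$, the defining $V_{SL}$-relation for $SL$-conjugation gives $V(a\cdot\varphi)=V(a)$, so injectivity forces $[a\cdot\varphi]_{SL}=[a]_{SL}$, i.e. ${\varphi}^{t}\chi\varphi(-,e_4)=a\circ g$ for some $g\in SL(P_3)$. Transporting this back through the functoriality of the Vaserstein symbol under precomposition (precomposition by $g$ on $a$ corresponds to conjugating the form by $\tilde g=g\oplus id_{Re_4}$) and the section-independence of the symbol up to $E(P_4)$-conjugation (\cite[Theorem 4.1]{Sy}), I would obtain an equality ${\varphi}^{t}\chi\varphi=({\tilde g}e)^{t}\chi(\tilde g e)$ for some $e\in E(P_4)$. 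Setting $\mu:=\varphi({\tilde g}e)^{-1}\in Sp(\chi)$ then yields $\mu e_4=\varphi e^{-1}e_4$, and if $e$ fixes $e_4$ this is exactly $\varphi e_4=\mu e_4\in Sp(\chi)e_4$, giving $SL(P_4)e_4\subseteq Sp(\chi)e_4$ (the reverse inclusion being automatic from $Sp(\chi)\subseteq SL(P_4)$).

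The main obstacle is precisely this last point of the converse: controlling the elementary automorphism $e$ coming from the change of section. A purely linear-algebra reduction is not available, since the constraint one extracts only says that the symplectic defect ${\sigma}{\sigma}^{\ast}$ fixes the vector $\varphi e_4$, which does not by itself force $\varphi e_4$ into $Sp(\chi)e_4$ (that would be a transitivity statement essentially equivalent to the very condition we are proving). The resolution must instead inspect the explicit elementary automorphism produced in the proof of \cite[Theorem 4.1]{Sy} and show it can be chosen to stabilize $e_4$, so that $\mu e_4=\varphi e_4$ holds on the nose. I expect this bookkeeping---matching the section arising from the $SL(P_4)$-action with the one arising from functoriality under $SL(P_3)$, up to an elementary automorphism in the stabilizer of $e_4$---to be the technical heart of the argument, and the analogue of the descent performed in Lemma \ref{L3.4} to be the natural tool for carrying it out.
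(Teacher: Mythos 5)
Your proposal is correct and follows the paper's proof essentially step for step: the forward direction by replacing $\varphi \in SL(P_{4})$ with $\psi = \sigma^{-1}\varphi$ fixing $e_{4}$ and reading off its $SL(P_{3})$-block, and the converse by writing $\chi = V(a,s)$ and ${\varphi}^{t}\chi\varphi = V(a',s')$, applying injectivity to get $a' = a\varphi'$ with $\varphi' \in SL(P_{0}\oplus R)$, and then combining functoriality under $\varphi'\oplus 1$ with a change of section. The step you flag as the main obstacle is settled in the paper exactly as you anticipate: it cites the proof of \cite[Theorem 4.1]{Sy} for the existence of $\varphi'' \in E(P_{4})$ with $\varphi'' e_{4} = e_{4}$ and ${\varphi''}^{t} V(a',s'')\varphi'' = V(a',s')$, so the elementary automorphism implementing the change of section can indeed be chosen to stabilize $e_{4}$, and no further bookkeeping or Lemma~\ref{L3.4}-type descent is required.
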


\begin{proof}
First of all, assume that $SL (P_{4}) e_{4} = {Sp} (\chi) e_{4}$ for all non-degenerate alternating forms $\chi$ on $P_{4}$ such that $[P_{4}, \chi_{0}\perp\psi_{2},\chi] \in \tilde{V} (R)$. Now let $a,a' \in Um (P_{0} \oplus R)$ such that $V (a) = V (a')$. Then ${\varphi}^{t} V (a,s) \varphi = V (a',s')$ for some $\varphi \in SL (P_{4})$ and sections $s,s'$ of $a$ and $a'$ by the previous theorem. By assumption there is $\varphi' \in {Sp} (V (a,s))$ with $\varphi e_{4} = \varphi' e_{4}$. If we let $\varphi'' = {\varphi'}^{-1} \varphi$, then $\varphi'' e_{4} = e_{4}$ and ${\varphi''}^{t} V (a,s) \varphi'' = V (a',s')$. Thus, if we write

\begin{center}
$\varphi'' =
\begin{pmatrix}
\varphi''_{0} & 0 \\
\varphi''_{R} & 1
\end{pmatrix}
\in Aut (P_{3} \oplus R)$,
\end{center}

then $\varphi''_{0}$ has determinant $1$ and satisfies $a' = a \varphi''_{0}$. In particular, the classes of $a$ and $a'$ in $Um (P_{0} \oplus R)/SL (P_{0} \oplus R)$ coincide and $V$ is injective.\\
Conversely, assume that $V$ is injective. Let $\chi$ be an arbitrary non-degenerate alternating form on $P_{4}$ such that $[P_{4}, \chi_{0}\perp\psi_{2},\chi] \in \tilde{V} (R)$ and also let $\varphi \in SL (P_{4})$. We write $\chi = V (a,s)$ and ${\varphi}^{t} \chi \varphi = V (a',s')$ for $a,a' \in Um (P_{0} \oplus R)$ with sections $s$ and $s'$. Then obviously $V (a) = V (a')$. By assumption, there is $\varphi' \in SL (P_{0} \oplus R)$ with $a' = a \varphi'$ and hence ${(\varphi' \oplus 1)}^{t} V (a,s) (\varphi' \oplus 1) = V (a', s'')$, where $s''$ is a section of $a'$. Furthermore, there exists $\varphi'' \in E (P_{4})$ with $\varphi'' e_{4} = e_{4}$ such that ${\varphi''}^{t} V (a',s'') \varphi'' = V (a', s')$ (cp. the proof of \cite[Theorem 4.1]{Sy}). The automorphism $\beta = \varphi {\varphi''}^{-1} {(\varphi' \oplus 1)}^{-1}$ lies in ${Sp} {\chi}$ and satisfies $\beta e_{4} = \varphi e_{4}$, which proves the theorem.
\end{proof}

The proof of Theorem \ref{SLInjective} shows in particular the following statement:

\begin{Kor}\label{C3.7}
Assume that $SL (P_{5})$ acts transitively on $Um (P_{5})$. Then the orbit space $Um (P_{0} \oplus R)/SL (P_{0} \oplus R)$ is trivial if and only if $W_{SL} (R)$ is trivial and $SL (P_{4}) e_{4} = {Sp} (\chi_{0} \perp \psi_2) e_{4}$.
\end{Kor}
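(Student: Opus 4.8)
The plan is to read off the corollary by combining the surjectivity criterion of Theorem \ref{T3.2} with the injectivity criterion of Theorem \ref{SLInjective}, both evaluated at the basepoint $[\pi_R]$ of the orbit space. The first thing I would observe is that, under the standing hypothesis that $SL(P_5)$ acts transitively on $Um(P_5)$, the map $V\colon Um(P_0\oplus R)/SL(P_0\oplus R)\to\tilde V_{SL}(R)$ is surjective: the symbol on $Um(P_0\oplus R)/E(P_0\oplus R)$ is surjective by Theorem \ref{T3.2} and factors through the canonical projection onto $Um(P_0\oplus R)/SL(P_0\oplus R)$. Since $\tilde V_{SL}(R)\cong W_{SL}(R)$ and $V(\pi_R)=0$, triviality of the orbit space is then equivalent to the conjunction ``$\tilde V_{SL}(R)=0$'' and ``$V$ is injective'' (an injection of a nonempty set into $\{0\}$ forces a single point, and conversely). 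So the whole statement reduces to checking that, granting $\tilde V_{SL}(R)=0$, injectivity of $V$ is equivalent to the single equality $SL(P_4)e_4={Sp}(\chi_0\perp\psi_2)e_4$.

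For the direction in which the orbit space is assumed trivial, surjectivity forces the image $\{V(\pi_R)\}=\{0\}$ to be all of $\tilde V_{SL}(R)$, so $\tilde V_{SL}(R)=0$ and hence $W_{SL}(R)=0$; moreover $V$ is then trivially injective, so Theorem \ref{SLInjective} gives $SL(P_4)e_4={Sp}(\chi)e_4$ for every non-degenerate alternating $\chi$ on $P_4$ with $[P_4,\chi_0\perp\psi_2,\chi]\in\tilde V(R)$, and specializing to $\chi=\chi_0\perp\psi_2$ yields the asserted equality.

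For the converse I would assume $\tilde V_{SL}(R)=0$ and $SL(P_4)e_4={Sp}(\chi_0\perp\psi_2)e_4$, fix $a\in Um(P_0\oplus R)$ with section $s$, and show $[a]=[\pi_R]$. Since $V(a)=0=V(\pi_R)$, Theorem \ref{T3.5} provides $\varphi\in SL(P_4)$ with $\varphi^t V(a,s)\varphi=V(\pi_R,s')$ for a section $s'$ of $\pi_R$. As $V(\pi_R,s')=\rho^t(\chi_0\perp\psi_2)\rho$ for some $\rho\in E(P_4)$ by construction of the symbol (cf. the proof of \cite[Theorem 4.1]{Sy}), the form $V(a,s)$ is $SL(P_4)$-congruent to $\chi_0\perp\psi_2$. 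I would then note that the condition $SL(P_4)e_4={Sp}(-)e_4$ is invariant under $SL(P_4)$-congruence: if $\chi'=\gamma^t\chi\gamma$ with $\gamma\in SL(P_4)$ then ${Sp}(\chi')=\gamma^{-1}{Sp}(\chi)\gamma$, and writing $\gamma e_4=he_4$ with $h\in{Sp}(\chi)$ (possible as $\gamma e_4\in SL(P_4)e_4={Sp}(\chi)e_4$) gives ${Sp}(\chi')e_4=\gamma^{-1}SL(P_4)e_4=SL(P_4)e_4$. Hence $SL(P_4)e_4={Sp}(V(a,s))e_4$, and I would run the argument of Theorem \ref{SLInjective} verbatim: pick $\varphi'\in{Sp}(V(a,s))$ with $\varphi'e_4=\varphi e_4$, set $\varphi''={\varphi'}^{-1}\varphi$ so that $\varphi''e_4=e_4$ and ${\varphi''}^t V(a,s)\varphi''=V(\pi_R,s')$; the block form of $\varphi''$ produces $\varphi''_0\in SL(P_0\oplus R)$ with $\pi_R=a\varphi''_0$, i.e. $[a]=[\pi_R]$. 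As $a$ was arbitrary, the orbit space is trivial.

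The step I expect to be the crux is the passage, in the converse, from the single orbit condition at $\chi_0\perp\psi_2$ to the orbit condition at the a priori arbitrary form $V(a,s)$. This is exactly where the hypothesis $\tilde V_{SL}(R)=0$ enters: it is what allows Theorem \ref{T3.5} (which itself packages Lemma \ref{L2.1} and Lemma \ref{L3.4}, absorbing all the stabilization) to upgrade the $K$-theoretic identity $V(a)=V(\pi_R)$ into an honest $SL(P_4)$-congruence on $P_4$, after which the congruence-invariance of the orbit condition closes the argument. Everything else is a direct specialization of Theorems \ref{T3.2}, \ref{T3.5} and \ref{SLInjective} to the basepoint $[\pi_R]$.
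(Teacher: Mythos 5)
Your proposal is correct, and its skeleton---surjectivity from Theorem \ref{T3.2}, the reduction ``orbit space trivial $\Leftrightarrow$ $\tilde{V}_{SL}(R)=0$ and $V$ injective'', and a re-run of the proof of Theorem \ref{SLInjective}---is exactly how the paper derives the corollary (its proof is literally the remark that the proof of Theorem \ref{SLInjective} ``shows in particular'' the statement). The one place where you genuinely diverge is the orientation of the comparison in the converse direction: you anchor at $a'=\pi_R$, so the argument of Theorem \ref{SLInjective} demands the orbit condition at the form $V(a,s)$ attached to the \emph{arbitrary} element $a$, and you must bridge the gap with your congruence-invariance lemma (that $SL(P_4)e_4 = Sp(\chi)e_4$ is stable under $SL(P_4)$-congruence of $\chi$); that lemma is correct, via $Sp(\gamma^t\chi\gamma)=\gamma^{-1}Sp(\chi)\gamma$ and the translation trick you describe, and it is indeed the crux of your write-up. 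The paper's orientation makes this unnecessary: take $a=\pi_R$ as the \emph{source}, with its canonical section, so that $V(\pi_R,s)=\chi_0\perp\psi_2$ on the nose; then Theorem \ref{T3.5} produces $\varphi\in SL(P_4)$ with $\varphi^t(\chi_0\perp\psi_2)\varphi = V(a',s')$, and the hypothesis $SL(P_4)e_4=Sp(\chi_0\perp\psi_2)e_4$ is applied exactly where the injectivity proof needs it, yielding $\varphi''$ fixing $e_4$ whose block $\varphi''_0\in SL(P_0\oplus R)$ satisfies $a'=\pi_R\varphi''_0$. So both arguments are valid: yours costs one extra (correct) lemma, while the paper's costs only the observation that the proof of Theorem \ref{SLInjective} uses the orbit condition only at the source form, which can be normalized to be $\chi_0\perp\psi_2$.
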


%\begin{proof}
%The proof of the last theorem showed the following statement for any $a,a' \in Um (P_{0} \oplus R)$ with sections $s$ and $s'$ respectively: The classes of $a$ and $a'$ in $Um (P_{0} \oplus R)/SL (P_{0} \oplus R)$ coincide if and only if $SL (P_{4}) e_{4} = {Sp} (\theta (a,s)) e_{4}$ and $V (a) = V (a')$. The corollary follows immediately from this.
%\end{proof}

Recall that one of the basic tools to study the groups $W_E (R)$ and $W_{SL} (R)$ is the Karoubi periodicity sequence

\begin{center}
$K_{1}{Sp} (R) \rightarrow SK_{1} (R) \rightarrow W_E (R) \rightarrow K_{0}{Sp} (R) \rightarrow K_{0} (R)$.
\end{center}

Now let us simply denote by $W_E (R)/SL_{3} (R)$ the cokernel of the composite $SL_{3} (R) \rightarrow SK_{1} (R) \rightarrow W_E (R)$. Then we can deduce the following result from the previous corollary:

\begin{Kor}
Assume that $R$ is a smooth $4$-dimensional algebra over the algebraic closure $k = \bar{\mathbb{F}}_{q}$ of a finite field such that $6 \in k^{\times}$. Then the orbit space $\mathit{Um}_{3} (R)/SL_{3} (R)$ is trivial if and only if $W_E (R)/SL_{3} (R)$ is trivial.
\end{Kor}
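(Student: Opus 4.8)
The plan is to deduce the statement from Corollary \ref{C3.7} together with the symplectic transitivity result of Section \ref{3.4}, and then to reduce everything to comparing the two cokernels $W_{SL}(R)$ and $W_E(R)/SL_3(R)$ by a stability argument particular to the base field $\bar{\mathbb{F}}_q$. Throughout I take $P_0 = R^2$, so that $P_n = R^n$, $\mathit{Um}(P_0\oplus R) = \mathit{Um}_3(R)$, $SL(P_0\oplus R) = SL_3(R)$, and $\chi_0\perp\psi_2 = \psi_4$ up to sign, whence $Sp(\chi_0\perp\psi_2) = Sp_4(R)$.

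First I would check the hypotheses of Corollary \ref{C3.7}. Since $k=\bar{\mathbb{F}}_q$ is an infinite perfect field of cohomological dimension $0\leq 1$ with $6\in k^{\times}$, the remark following Theorem \ref{T3.2} shows that $SL_5(R)$ acts transitively on $\mathit{Um}_5(R)$. Moreover $6\in k^{\times}$ forces $\mathrm{char}(k)\neq 2,3$ and hence $24 = 4!\in k^{\times}$, so the transitivity result of Section \ref{3.4} applies with $d=4$ and gives $Sp_4(R)e_4 = \mathit{Um}_4(R)$. As $Sp_4(R)\subseteq SL_4(R)$ we get $\mathit{Um}_4(R) = Sp_4(R)e_4\subseteq SL_4(R)e_4\subseteq \mathit{Um}_4(R)$, so the condition $SL_4(R)e_4 = Sp(\chi_0\perp\psi_2)e_4$ holds automatically. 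Corollary \ref{C3.7} then says that $\mathit{Um}_3(R)/SL_3(R)$ is trivial if and only if $W_{SL}(R)=0$, so the claim reduces to the purely $K$-theoretic equivalence $W_{SL}(R)=0 \iff W_E(R)/SL_3(R)=0$.

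Since $SL_3(R)$ maps through $SK_1(R)$ into $W_E(R)$, there is a canonical surjection $W_E(R)/SL_3(R)\twoheadrightarrow W_{SL}(R)$; hence $W_E(R)/SL_3(R)=0$ already forces $W_{SL}(R)=0$, which settles one direction. For the converse I would use the Karoubi periodicity sequence $K_1 Sp(R)\xrightarrow{f} SK_1(R)\xrightarrow{H} W_E(R)$, in which $\ker(H)=\mathrm{im}(f)$. Writing $j_3\colon SL_3(R)\to SK_1(R)$ for the stabilization, the hypothesis $W_{SL}(R)=0$ reads $H(SK_1(R))=W_E(R)$, while the desired conclusion $W_E(R)/SL_3(R)=0$ reads $H(j_3(SL_3(R)))=W_E(R)$. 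As $H(j_3(SL_3(R)))\subseteq H(SK_1(R))$, it suffices to prove $SK_1(R) = j_3(SL_3(R)) + \mathrm{im}(f)$. The mechanism here is a decomposition of $SL_4(R)$: given $g\in SL_4(R)$, symplectic transitivity on $\mathit{Um}_4(R)$ yields $h\in Sp_4(R)$ with $he_4 = ge_4$, so that $h^{-1}g$ fixes $e_4$; its last column being $e_4$, it is block lower-triangular with an $SL_3(R)$-block $A$ in the top-left corner, hence equals $(A\oplus 1)$ times an element of $E_4(R)$. Passing to $SK_1(R)$ shows that the class of $g$ lies in $j_3(SL_3(R)) + \mathrm{im}(f)$, i.e. the image of $SL_4(R)\to SK_1(R)$ is contained in $j_3(SL_3(R)) + \mathrm{im}(f)$.

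The one remaining input, and the main obstacle, is the surjectivity of $SL_4(R)\to SK_1(R)$. Bass' stability theorem only gives surjectivity of $SL_{d+1}(R)=SL_5(R)\to SK_1(R)$ for a fourfold, and one needs the improvement by one to level $d=4$. This improvement is exactly where the hypothesis $k=\bar{\mathbb{F}}_q$ is genuinely used: it is not available over an arbitrary algebraically closed field, but over the algebraic closure of a finite field the higher $K$-theory is torsion (as a filtered colimit of finite fields), and it is this extra structure—rather than the cohomological dimension alone—that yields the improved surjective $K_1$-stability $SL_4(R)\twoheadrightarrow SK_1(R)$. Combining it with the decomposition above gives $SK_1(R) = j_3(SL_3(R)) + \mathrm{im}(f)$; applying $H$ then yields $H(SK_1(R)) = H(j_3(SL_3(R)))$, so that $W_{SL}(R)=0$ implies $W_E(R)/SL_3(R)=0$ and completes the equivalence. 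I expect the real work to lie precisely in pinning down (or correctly citing) this improved surjective stability statement for smooth affine fourfolds over $\bar{\mathbb{F}}_q$; everything else is formal given Corollary \ref{C3.7} and the symplectic transitivity of Section \ref{3.4}.
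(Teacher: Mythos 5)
Your proposal is sound, and it reaches the statement by a genuinely different route than the paper. The paper proves this corollary inside Section \ref{3.2}, before the symplectic transitivity results of Section \ref{3.4} exist, so it never invokes Corollary \ref{C3.18}. Instead it quotes \cite[Corollary 7.8]{FRS} for the full isomorphism $SL_4(R)/E_4(R)\cong SK_1(R)$ and uses \emph{both} halves of it: surjectivity in the forward direction, where the orbit equality $SL_4(R)e_4=Sp_4(R)e_4$ needed for your block decomposition is extracted from Corollary \ref{C3.7} as a consequence of the assumed triviality of $\mathit{Um}_3(R)/SL_3(R)$; and injectivity in the converse direction, where the paper has to recover that orbit equality from the hypothesis $W_E(R)/SL_3(R)=0$ by running the Karoubi sequence backwards (writing $\varphi(\varphi'\oplus 1)^{-1}$ as a class coming from $K_1Sp(R)$, using $Sp(R)=ESp(R)Sp_4(R)$ from \cite{SV}, injectivity of $SL_4(R)/E_4(R)\to SK_1(R)$, and the equality $E_4(R)v=(E_4(R)\cap Sp_4(R))v$), and only then applies Corollary \ref{C3.7}. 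Your use of Corollary \ref{C3.18} (which is proved independently of this corollary, so there is no circularity; note it is the column version \ref{C3.18} you need, not the row version Theorem \ref{T3.17}) makes the orbit condition in Corollary \ref{C3.7} automatic, collapses the statement to the purely $K$-theoretic equivalence $W_{SL}(R)=0\iff W_E(R)/SL_3(R)=0$, and dispenses with injectivity entirely; the cost is that you import the motivic machinery behind Theorem \ref{T3.17}. Both of your cokernel comparisons (the canonical surjection $W_E(R)/SL_3(R)\twoheadrightarrow W_{SL}(R)$, and the decomposition $g=h\varepsilon(A\oplus 1)$ which together with $H\circ f=0$ gives $H(SK_1(R))=H(j_3(SL_3(R)))$) are correct.

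The one input you left open, surjectivity of $SL_4(R)\to SK_1(R)$, is indeed exactly what the paper imports (inside the isomorphism) from \cite[Corollary 7.8]{FRS}, so you identified the right missing citation. But your diagnosis of \emph{why} it holds is off: it is not a torsion or rigidity phenomenon special to $\bar{\mathbb{F}}_q$, and it is available over every algebraically closed field. Every class in $SK_1(R)$ is represented by some $g\in SL_n(R)$; for $n\geq 6$ the group $E_n(R)$ acts transitively on $\mathit{Um}_n(R)$ by \cite[Chapter IV, Theorem 3.4]{HB}, and for $n=5$ it does so by Suslin's cancellation theorem \cite{S1}, valid for affine fourfolds over any algebraically closed field; moving the last row of $g$ to $(0,\dots,0,1)$ and then clearing the last column writes $g=(A\oplus 1)\varepsilon$ with $A\in SL_{n-1}(R)$ and $\varepsilon\in E_n(R)$, and iterating descends the class to $SL_4(R)$. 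So your argument, completed this way, in fact proves the equivalence --- indeed the stronger statement that $W_E(R)/SL_3(R)\to W_{SL}(R)$ is an isomorphism --- over any algebraically closed field $k$ with $6\in k^{\times}$, consistently with Theorem \ref{T3.19}. The restriction to $\bar{\mathbb{F}}_q$ in the paper reflects its proof strategy (reliance on \cite[Corollary 7.8]{FRS}, in particular on injectivity, at a point in the paper where Corollary \ref{C3.18} is not yet available), not an obstruction your route would encounter.
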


\begin{proof}
As a matter of fact, it was proven in \cite[Corollary 7.8]{FRS} that the homomorphism $SL_{4} (R)/E_{4} (R) \xrightarrow{\cong} SK_{1} (R)$ is an isomorphism.\\
Now assume that $\mathit{Um}_{3} (R)/SL_{3} (R)$ is trivial. By Corollary \ref{C3.7}, this means that the map $SK_{1} (R) \rightarrow W_E (R)$ is surjective and ${Sp}_{4} (R) e_{4} = SL_{4} (R) e_{4}$. The second condition and the isomorphism $SL_{4} (R)/E_{4} (R) \cong SK_{1} (R)$ easily imply that any matrix in $SL_{4} (R)$ lies in $SL_{3} (R)$ up to a matrix in ${Sp}_{4} (R) E_{4} (R)$. Since elements in ${Sp}_{4} (R) E_{4} (R)$ are sent to $0$ in $W_E (R)$ under the map $SK_{1} (R) \rightarrow W_E (R)$, this immediately implies that $W_E (R)/SL_{3} (R) = W_{SL} (R) = 0$.\\
Conversely, assume that $W_E (R)/SL_{3} (R)$ is trivial. Then $W_{SL} (R)$ is obviously trivial. Now let $\varphi \in SL_{4} (R)$. Then the class of the matrix ${\varphi}^{t} \psi_{4} \varphi$ is trivial in $W_E (R)/SL_{3} (R)$. By the Karoubi periodicity sequence, this means that there exists a matrix $\varphi' \in SL_{3} (R)$ such that $\varphi {(\varphi' \oplus 1)}^{-1}$ is in the image of the map $K_{1}{Sp} (R) \rightarrow SK_{1} (R)$. Since $\dim (R) = 4$, $K_{1}{Sp} (R)$ is generated by $Sp_{4} (R)$; the isomorphism $SL_{4} (R)/E_{4} (R) \cong SK_{1} (R)$ then implies that ${\varphi''}^{-1} \varphi {(\varphi' \oplus 1)}^{-1}$ lies in $E_{4} (R)$ for some $\varphi'' \in Sp_{4} (R)$. Since for any $v \in \mathit{Um}^{t}_{4} (R)$ one has $E_{4} (R) v = (E_{4} (R) \cap {Sp}_{4} (R)) v$, it follows that there is an element $\psi \in E_{4} (R) \cap {Sp}_{4} (R)$ with ${\varphi''}^{-1} \varphi {(\varphi' \oplus 1)}^{-1} e_{4} = \psi e_{4}$. Since $\varphi {(\varphi' \oplus 1)}^{-1} e_{4} = \varphi e_{4}$, it follows that $\varphi e_{4} = \varphi'' \psi e_{4}$ and $\varphi'' \psi \in {Sp}_{4} (R)$. This proves the corollary.
\end{proof}

\begin{Kor}
Assume that $R$ is a smooth affine algebra of dimension $3$ over an algebraically closed field $k$ with $char(k) \neq 2$. Then we have an equality $Sp (\chi_{0}\perp\psi_{2}) e_{4} = Unim.El. (P_{4})$.
\end{Kor}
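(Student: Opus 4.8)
The plan is to describe the orbit $Sp(\chi_{0}\perp\psi_{2})e_{4}$ intrinsically and then to reduce the statement to a cancellation problem for rank $2$ symplectic modules. Write $\chi=\chi_{0}\perp\psi_{2}$. The inclusion $Sp(\chi)e_{4}\subseteq Unim.El.(P_{4})$ is immediate, so only the reverse inclusion is at issue. I would first prove that, for a unimodular $v\in P_{4}$, one has $v\in Sp(\chi)e_{4}$ if and only if there is an isometry $v^{\perp}/Rv\cong P_{0}$ of non-degenerate alternating modules. For the ``only if'' direction, an isometry $\tau\in Sp(\chi)$ with $\tau(e_{4})=v$ carries $e_{4}^{\perp}$ onto $v^{\perp}$ and $Re_{4}$ onto $Rv$, hence induces an isometry $P_{0}=e_{4}^{\perp}/Re_{4}\xrightarrow{\cong}v^{\perp}/Rv$. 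For the ``if'' direction, since $\chi$ is non-degenerate and $v$ is unimodular, the map $\chi(v,-):P_{4}\rightarrow R$ is a split epimorphism; choosing $w$ with $\chi(v,w)=1$ produces a hyperbolic pair and an orthogonal decomposition $P_{4}=(Rv\oplus Rw)\perp W$ with $W\cong v^{\perp}/Rv$ (as $v^{\perp}=Rv\oplus W$). Combining an isometry of hyperbolic planes carrying $e_{4}$ to $v$ with a chosen isometry $P_{0}\cong W$ then yields an isometry of $P_{4}$ carrying $e_{4}$ to $v$, which is the desired element of $Sp(\chi)$.

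Comparing the decompositions $P_{4}=(Re_{4}\oplus Re_{3})\perp P_{0}$ and $P_{4}=(Rv\oplus Rw)\perp W$ shows that $W\perp\psi_{2}\cong P_{4}\cong P_{0}\perp\psi_{2}$; in particular, since $P_{0}$ has trivial determinant, so do $W\cong v^{\perp}/Rv$ and $P_{0}$. Thus the corollary is equivalent to the assertion that every unimodular $v$ satisfies $v^{\perp}/Rv\cong P_{0}$, and by the previous paragraph this amounts to a single symplectic cancellation statement: if $M$ and $N$ are rank $2$ modules with trivial determinant and $M\perp\psi_{2}\cong N\perp\psi_{2}$, then $M\cong N$ as non-degenerate alternating modules.

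To prove this cancellation I would use the $\mathbb{A}^{1}$-homotopy classification of rank $2$ symplectic bundles over $X=Spec(R)$. Two such modules with isometric hyperbolic stabilizations give maps $X\rightarrow BSp_{2}$ with the same image in $BSp_{4}$, so their difference is controlled by the homotopy fibre of the stabilization $BSp_{2}\rightarrow BSp_{4}$, which is $\mathbb{A}^{1}$-equivalent to $Sp_{4}/Sp_{2}\simeq_{\mathbb{A}^{1}}\mathbb{A}^{4}\setminus 0$. Since $\mathbb{A}^{4}\setminus 0$ is $\mathbb{A}^{1}$-$2$-connected with $\pi_{3}^{\mathbb{A}^{1}}(\mathbb{A}^{4}\setminus 0)\cong\textbf{K}^{MW}_{4}$ and $X$ is a smooth affine threefold, the only possible obstruction to cancellation lives in the top-degree group $H^{3}(X,\textbf{K}^{MW}_{4})$. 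I would then show this obstruction is annihilated (equivalently, that it lies in the image of $[X,Sp_{4}]_{\mathbb{A}^{1}}$) by a Gersten-resolution computation, using the cohomological dimension bounds $cd(k(x_{p}))\leq 3-p$ for $x_{p}\in X^{(p)}$ that hold because $k$ is algebraically closed, in the same spirit as the proof of Proposition \ref{P2.6}; the hypothesis $char(k)\neq 2$ is what makes the Grothendieck-Witt machinery available.

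The main obstacle is precisely this last step: the vanishing of the top-degree obstruction, i.e.\ the cancellation of a hyperbolic plane from a rank $2$ symplectic module over a smooth affine threefold over an algebraically closed field. Everything preceding it---the characterization of the orbit of $e_{4}$ by the isometry type of $v^{\perp}/Rv$ and the reduction to cancellation---is formal and uses only the non-degeneracy of $\chi$ and the triviality of the determinants, whereas the cancellation genuinely requires all of the standing hypotheses. As a sanity check I note that the weaker transitivity of $SL(P_{4})$ on $Unim.El.(P_{4})$ is the easy half: the complement $P_{4}/Rv$ has rank $3=\dim R$, so it follows from Suslin's cancellation theorem for projective modules of rank equal to the dimension (cp.\ \cite{S1}). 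The whole content of the corollary is therefore the upgrade of this transitivity from $SL(P_{4})$ to $Sp(\chi)$.
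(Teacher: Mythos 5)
Your route is genuinely different from the paper's. The paper never touches hyperbolic pairs: it deduces from \cite[Theorem 6.6]{AF2} (adapted from $BGL_{2}$ to $BSL_{2}$) that $\mathcal{V}_{2}^{o} (R) \cong CH^{2} (X)$, concludes via Section \ref{2.1} that $Um (P_{0} \oplus R)/SL (P_{0} \oplus R)$ is trivial, and then extracts the equality $SL (P_{4}) e_{4} = Sp (\chi_{0} \perp \psi_{2}) e_{4}$ from its own Corollary \ref{C3.7}, i.e.\ from the Vaserstein-symbol machinery, finishing with the transitivity $SL (P_{4}) e_{4} = Unim.El. (P_{4})$. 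Your first two steps --- the characterization $v \in Sp(\chi) e_{4} \Leftrightarrow v^{\perp}/Rv \cong P_{0}$ via hyperbolic pairs, and the reduction to cancellation of a hyperbolic plane from a rank $2$ symplectic module --- are correct and bypass Corollary \ref{C3.7} entirely; this is an attractive, more intrinsic reduction. The obstruction-theoretic setup is also essentially right: symplectic bundles over smooth affine schemes are representable (\cite{AHW}), $Sp_{4}/Sp_{2} \simeq_{\mathbb{A}^{1}} \mathbb{A}^{4}\setminus 0$ is $\mathbb{A}^{1}$-$2$-connected with $\pi_{3}^{\mathbb{A}^{1}} \cong \textbf{K}^{MW}_{4}$, and since $BSp_{4}$ is $\mathbb{A}^{1}$-simply connected, the Moore-Postnikov tower of $BSp_{2} \rightarrow BSp_{4}$ shows that over a threefold the fibers of $[X, BSp_{2}]_{\mathbb{A}^{1}} \rightarrow [X, BSp_{4}]_{\mathbb{A}^{1}}$ are controlled by $H^{3} (X, \textbf{K}^{MW}_{4})$. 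One caveat: your parenthetical reformulation ``lies in the image of $[X, Sp_{4}]_{\mathbb{A}^{1}}$'' only makes sense for the fiber over the \emph{trivial} symplectic bundle, whereas $\chi_{0} \perp \psi_{2}$ need not be trivial (that is the whole point when $P_{0}$ is not free); you must run the relative obstruction-theory argument rather than the basepoint fiber sequence, which is standard but should be said.

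The genuine gap is your last step, and it is the crux. A Gersten-resolution argument with the bounds $cd (k(x_{p})) \leq 3-p$, ``in the spirit of Proposition \ref{P2.6}'', cannot prove $H^{3} (X, \textbf{K}^{MW}_{4}) = 0$: such arguments yield exactly what Proposition \ref{P2.6} yields, namely divisibility statements and vanishing of mod-$2$ cohomology. Indeed, the degree-$3$ term of the Gersten complex for $\textbf{K}^{MW}_{4}$ is $\bigoplus_{x_{3} \in X^{(3)}} K^{MW}_{1} (k(x_{3})) \cong \bigoplus k^{\times}$ (since $I^{2}(k) = 0$ for $k$ algebraically closed), a divisible group, so $H^{3} (X, \textbf{K}^{MW}_{4})$ is divisible and trivial mod $2$ --- but a divisible group that vanishes mod $2$ need not vanish ($\mathbb{Q}$ is such a group) unless it is known to be torsion, and nothing in your argument supplies torsion. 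This is precisely why the paper, in Corollary \ref{C3.20}, must invoke the Swan-Towber theorem \cite{SwT} to know $W_{SL} (R)$ is $2$-torsion before its $2$-divisibility criteria have any force; you have no analogous input for $H^{3} (X, \textbf{K}^{MW}_{4})$. The repair is short and, ironically, uses the theorem you set aside as only good for the ``easy half'': by the identifications recalled in Section \ref{2.4} and obstruction theory for the $2$-connected space $\mathbb{A}^{4}\setminus 0$, one has $H^{3} (X, \textbf{K}^{MW}_{4}) \cong [X, \mathbb{A}^{4}\setminus 0]_{\mathbb{A}^{1}} \cong \mathit{Um}_{4} (R)/E_{4} (R)$, and Suslin's cancellation theorem \cite{S1} says this orbit set is trivial for an affine threefold over an algebraically closed field. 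With that substitution your proof closes, and is arguably more self-contained than the paper's, which imports the full Chern-class classification of \cite{AF2}.
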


\begin{proof}
Let $X = Spec(R)$. Using the usual Postnikov tower techniques in motivic homotopy theory, it was proven in \cite[Theorem 6.6]{AF2} that the pointed map $\mathcal{V}_{2} (R) \rightarrow CH^{1} (X) \times CH^{2} (X)$ induced by the first and second Chern classes is a bijection. If one applies the same methods to $BSL_{2}$ instead of $BGL_{2}$, one can analogously deduce a bijection $\mathcal{V}_{2}^{o} (R) \xrightarrow{\cong} CH^{2} (X)$. Hence the isomorphism class of an oriented vector bundle is uniquely determined by its second Chern class. But stably isomorphic oriented vector bundles have the same total Chern class and must therefore be isomorphic. Hence it follows from Section \ref{2.1} that $Um (P_{0} \oplus R)/SL (P_{0} \oplus R)$ is trivial. Since $SL (P_{4}) e_{4} = Unim.El. (P_{4})$ and $SL (P_{5})$ acts transitively on $Um (P_{5})$, the result follows by Corollary \ref{C3.7}.
\end{proof}

Now let us briefly recall some results on the Bass-Quillen conjecture. Let us consider the following general statement for a commutative ring $R$:

\begin{itemize}
\item[$BQ(R)$] For $n \geq 1$, all finitely generated projective modules over $R[X_{1},...X_{n}]$ are extended from $R$.
\end{itemize}

It is expected that $BQ (R)$ holds whenever $R$ is a regular Noetherian ring. Note that all finitely generated projective $R[X]$-modules are free if $R$ is a regular local ring such that $BQ (R)$ holds. The Bass-Quillen conjecture is known to hold in many cases, e.g. if $R$ is a regular $k$-algebra essentially of finite type over a field $k$ (cp. \cite{Li}). Furthermore, it follows from the Quillen-Suslin theorem that all finitely generated projective $R[X]$-modules are free if $R$ is a regular local ring of dimension $\leq 1$. Moreover, M. P. Murthy proved in \cite{M} that all finitely generated projective $R[X]$-modules are free if $R$ is a regular local ring of dimension $2$ and later R. A. Rao proved in \cite{R} that the same statement holds if $R$ is a regular local ring of dimension $3$ with $6 \in R^{\times}$. Note that if $R$ is a regular local ring, the assumption on regularity implies that all finitely generated projective modules over $R[X]$ are stably free and hence the conjecture holds if and only if $GL_{r} (R[X])$ acts transitively on $\mathit{Um}_{r} (R[X])$ (or, equivalently, on $\mathit{Um}_{r}^{t} (R[X])$) for all $r \geq 3$. We may thus deduce the following statement from the previous results:

\begin{Prop}
Let $R$ be any regular local ring of dimension $4$ such that $6 \in R^{\times}$. Then all finitely generated projective $R[X]$-modules are free if and only if $Sp_{4} (R[X])$ acts transitively on $\mathit{Um}_{4}^{t} (R[X])$.
\end{Prop}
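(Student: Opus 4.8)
The plan is to combine the module-theoretic reformulation recorded just before the statement with Corollary \ref{C3.7}. Since $R$ is regular local, every finitely generated projective $R[X]$-module is stably free, so freeness of all such modules is equivalent to transitivity of $GL_r (R[X])$ on $\mathit{Um}_r^t (R[X])$ for every $r \geq 3$, i.e. to freeness of every stably free module of rank $r-1 \geq 2$. First I would dispose of the large-rank cases: as $\dim R = 4$, Plumstead's extension of Bass' cancellation theorem to polynomial extensions shows that projective $R[X]$-modules of rank $> 4$ are cancellative, so every stably free module of rank $\geq 5$ (equivalently every class in $\mathit{Um}_r^t (R[X])$ with $r \geq 6$) is free. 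The boundary rank-$4$ case, i.e. transitivity on $\mathit{Um}_5 (R[X])$, I would settle separately from the transitivity results for unimodular rows of length $\dim R + 1$ over $R[X]$ (note $4! = 24 \in R^{\times}$ since $6 \in R^{\times}$); this is simultaneously the hypothesis needed to apply Corollary \ref{C3.7} to $A = R[X]$. The problem then reduces to the two lengths $r = 3$ and $r = 4$.

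The heart of the argument is to identify these two remaining transitivity statements with the single symplectic condition. I would apply Corollary \ref{C3.7} with $P_0 = R^2$, so that $P_4 = R^4$, $\chi_0 \perp \psi_2 = \psi_4$ and $Sp (\chi_0 \perp \psi_2) = Sp_4$. For $A = R[X]$ the relevant Witt groups vanish: by homotopy invariance of the higher Grothendieck-Witt groups of the regular ring $R[X]$ one has $W_E (R[X]) \cong W_E (R) = GW_1^3 (R)$, and the Karoubi periodicity sequence together with $SK_1 (R) = 0$ and the injectivity of $K_0 Sp (R) \to K_0 (R)$ for the local ring $R$ forces $W_E (R) = 0$; hence $W_{SL} (R[X])$, being a quotient of $W_E (R[X])$, vanishes as well. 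Corollary \ref{C3.7} then reads: $\mathit{Um}_3 (R[X])/SL_3 (R[X])$ is trivial if and only if $SL_4 (R[X]) e_4 = Sp_4 (R[X]) e_4$. Since over any ring the determinant may be absorbed into a diagonal matrix fixing $e_4$, triviality of $\mathit{Um}_3/SL_3$ is the same as transitivity of $GL_3$ on $\mathit{Um}_3^t$, i.e. the length-$3$ case.

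Assembling the inclusions $Sp_4 (R[X]) e_4 \subseteq SL_4 (R[X]) e_4 \subseteq \mathit{Um}_4^t (R[X])$, I would deduce the key equivalence: $Sp_4 (R[X])$ acts transitively on $\mathit{Um}_4^t (R[X])$ if and only if both $SL_4 e_4 = \mathit{Um}_4^t$ (which, again by absorbing determinants, is the length-$4$ case) and $SL_4 e_4 = Sp_4 e_4$ (equivalently, by the previous paragraph, the length-$3$ case) hold. This gives both directions at once. If all projective $R[X]$-modules are free, then the length-$3$ and length-$4$ cases hold, whence $Sp_4$ is transitive on $\mathit{Um}_4^t$. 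Conversely, symplectic transitivity yields the length-$3$ and length-$4$ cases, and together with the already settled lengths $\geq 5$ this establishes transitivity of $GL_r$ on $\mathit{Um}_r^t (R[X])$ for all $r \geq 3$, i.e. freeness of all projective $R[X]$-modules.

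The main obstacle I anticipate is the boundary length-$5$ case, namely showing that rank-$4$ stably free $R[X]$-modules are free (equivalently that $SL_5 (R[X])$ acts transitively on $\mathit{Um}_5 (R[X])$). This lies exactly one step below the range covered by Plumstead cancellation, and it is precisely the transitivity hypothesis required to invoke Corollary \ref{C3.7}; unlike the passage between lengths $3$ and $4$, it cannot be extracted from the symplectic condition by the orbit comparison above, so it must be secured independently from the polynomial-ring transitivity results, using $4! \in R^{\times}$. A secondary point to verify carefully is the homotopy-invariance input $W_E (R[X]) \cong W_E (R)$ for the non-affine regular ring $R[X]$, which underlies the vanishing of the Witt groups driving the whole reduction.
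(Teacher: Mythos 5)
Your proof is correct, and its outer skeleton is the paper's: Plumstead's theorem for ranks $\geq 4$ together with transitivity of $E_{5}(R[X])$ on $\mathit{Um}_{5}(R[X])$ — the input you flag as the main obstacle is exactly \cite[Corollary 2.7]{R}, and the paper uses it for precisely the double purpose you identify (settling the boundary rank and licensing Theorems \ref{T3.5}, \ref{SLInjective} and Corollary \ref{C3.7} over $R[X]$ despite $\dim R[X]=5$) — followed by the orbit comparison $Sp_{4}(R[X])e_{4} \subseteq SL_{4}(R[X])e_{4} \subseteq \mathit{Um}_{4}^{t}(R[X])$. The genuine divergence is in how $\tilde{V}_{SL}(R[X])$ is neutralized. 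You prove it vanishes outright, via homotopy invariance of Grothendieck--Witt groups of regular rings containing $\frac{1}{2}$ (this is available, by Karoubi's theorem, see also \cite{MS3}; it is indeed the right tool, since $R[X]$, though affine, need not be of finite type over a field, so the $\mathbb{A}^{1}$-representability results of Section \ref{2.5} do not apply — presumably what you meant by ``non-affine'') combined with the Karoubi sequence for the local ring $R$, where $SK_{1}(R)=0$ and $K_{0}Sp(R) \rightarrow K_{0}(R)$ is injective. The paper never computes this group: in the converse direction it instead invokes the proofs of \cite[Propositions 2.2 and 2.9]{R} to conclude that $V_{\theta_{0}}\colon \mathit{Um}_{3}(R[X])/SL_{3}(R[X]) \rightarrow \tilde{V}_{SL}(R[X])$ is a \emph{constant} map, and pairs this with the injectivity argument from the proof of Theorem \ref{SLInjective}, fed by the assumed $Sp_{4}$-transitivity. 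The two routes are a posteriori equivalent — constancy plus surjectivity (Theorem \ref{T3.2}) also forces $\tilde{V}_{SL}(R[X])=0$ — but they buy different things: yours isolates a clean, reusable vanishing statement in Hermitian $K$-theory at the cost of importing a substantial external theorem, while the paper's stays entirely within Rao's polynomial-ring computations, i.e.\ the same source as the unavoidable length-$5$ input. One notational slip: $GW_{1}^{3}(R)$ is $W'_{E}(R)$, not $W_{E}(R)$ (the latter is the kernel of the Pfaffian); since the homotopy-invariance isomorphism is induced by $R \hookrightarrow R[X]$, commutes with Pfaffians, and $R[X]^{\times}=R^{\times}$ for the domain $R$, your conclusion $W_{E}(R[X]) \cong W_{E}(R) = 0$ is unaffected.
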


\begin{proof}
Since $R[X]$ is essentially of dimension $4$, we know that $E_{r} (R[X])$ acts transitively on $\mathit{Um}_{r} (R[X])$ for $r \geq 6$. Moreover, it was proven in \cite[Corollary 2.7]{R} that $E_{5} (R[X])$ acts transitively on $\mathit{Um}_{5} (R[X])$ as well.\\
If we let $P_{0} = R^{2}$, then there exists a canonical trivialization $\theta_{0}$ of $\det(R^{2})$ given by $1 \mapsto e_{1} \wedge e_{2}$, where $e_{1} = (1,0), e_{2} = (0,1) \in R^{2}$. Consequently, there is a generalized Vaserstein symbol $V_{\theta_{0}}: \mathit{Um}_{3} (R[X])/SL_{3} (R[X]) \rightarrow \tilde{V}_{SL} (R[X])$ associated to $\theta_{0}$. Although $\dim (R[X]) = 5$, the proofs of Theorems \ref{T3.5}, \ref{SLInjective} and Corollary \ref{C3.7} work for $R[X]$, because $E_{r} (R[X])$ acts transitively on $\mathit{Um}^{t}_{r} (R[X])$ for $r \geq 5$.\\
Now assume that all finitely generated projective $R[X]$-modules are free. Then $SL_{r} (R[X])$ acts transitively on $\mathit{Um}_{r}^{t} (R[X])$ for $r=3,4$. In particular, the orbit space $\mathit{Um}_{3} (R[X])/SL_{3} (R[X])$ is trivial. Then it follows directly from Corollary \ref{C3.7} that $Sp_{4} (R[X])$ acts transitively on $\mathit{Um}_{4}^{t} (R[X])$.\\
Conversely, assume only that $Sp_{4} (R[X])$ acts transitively on $\mathit{Um}_{4}^{t} (R[X])$. The proofs of \cite[Proposition 2.2 and Proposition 2.9]{R} show that the usual Vaserstein symbol $V_{-\theta_{0}}$ and hence also $V_{\theta_{0}}: \mathit{Um}_{3} (R[X])/SL_{3} (R[X]) \rightarrow \tilde{V}_{SL} (R[X])$ is a constant map. But the proof of Theorem \ref{SLInjective} then shows that it is also injective, because $Sp_{4} (R[X])$ acts transitively on $\mathit{Um}_{4}^{t} (R[X])$. Consequently, all finitely generated projective $R[X]$-modules are free.
\end{proof}

\subsection{Descriptions of the orbit spaces $Um (P_{0} \oplus R)/E (P_{0} \oplus R)$ and $Um (P_{0} \oplus R)/SL (P_{0} \oplus R)$}\label{3.3}

Let $R$ be a Noetherian commutative ring of dimension $\leq 4$ such that $SL (P_{5})$ acts transitively on the set $Um (P_{5})$. We now try to use the previous results in order to give descriptions of both the orbit spaces $Um (P_{0} \oplus R)/E (P_{0} \oplus R)$ and $Um (P_{0} \oplus R)/SL (P_{0} \oplus R)$.\\
For any map $F: M \rightarrow N$ between sets $M$ and $N$, one has $M = \cup_{x \in N} F^{-1} (x)$. Therefore we also have $Um (P_{0} \oplus R)/E (P_{0} \oplus R) = \cup_{\beta \in \tilde{V}_{SL} (R)} V^{-1} (\beta)$. Now let us fix an element $a \in Um (P_{0} \oplus R)$ together with a section $s$ of $a$ and give a description of the preimage $V^{-1} (V(a)) \subset Um (P_{0} \oplus R)/E (P_{0} \oplus R)$. We set $\chi = V (a,s)$. We have an obvious map

\begin{center}
$i_{a}: SL (P_{4}) \rightarrow V^{-1} (V(a)), \varphi \mapsto a \cdot \varphi$,
\end{center}

induced by the right action of $SL (P_{4})$ on $Um (P_{0} \oplus R)/E (P_{0} \oplus R)$. By Theorem \ref{T3.5}, this map is immediately surjective.\\
Now let $\varphi_{1}$ and $\varphi_{2}$ be two elements of $SL (P_{4})$ such that $\varphi_{1} {\varphi_{2}}^{-1} \in {Sp} (\chi)E (P_{4})$. Then obviously $i_{a} (\varphi_{1}) = i_{a} (\varphi_{2})$. Conversely, let $\varphi_{1}, \varphi_{2} \in SL (P_{4})$ such that $i_{a} (\varphi_{1}) = i_{a} (\varphi_{2})$. Then it follows from the proofs of \cite[Theorems 4.1 and 4.3]{Sy} that there is an element $\varphi \in E (P_{4})$ such that

\begin{center}
${\varphi_{1}}^{t}\chi \varphi_{1} = {\varphi}^{t} {\varphi_{2}}^{t} \chi \varphi_{2} \varphi$.
\end{center}

In particular, since $E (P_{4})$ is a normal subgroup of $SL (P_{4})$, it follows that $\varphi_{1} {\varphi_{2}}^{-1}$ lies in ${Sp} (\chi)E (P_{4})$. Thus, it follows that $i_{a}$ induces a bijection

\begin{center}
$i_{a}: {Sp} (\chi)E (P_{4})\backslash SL (P_{4}) \xrightarrow{\cong} V^{-1} (V(a))$
\end{center}

between the set of right cosets of ${Sp} (\chi)E (P_{4})$ in $SL (P_{4})$ and the preimage $V^{-1} (V(a))$. Altogether, we have just established the following description of $Um (P_{0} \oplus R)/E (P_{0} \oplus R)$:

\begin{Thm}
Let $\{\chi_{i}\}_{i \in I}$ be a set of non-degenerate alternating forms on $P_{4}$ such that $I \rightarrow \tilde{V}_{SL} (R), i \mapsto [P_{4}, \chi_{0} \perp \psi_{2}, \chi_{i}]$, is a bijection. Then there is a bijection $Um (P_{0} \oplus R)/E (P_{0} \oplus R) \cong \cup_{i \in I} {Sp} (\chi_{i})E (P_{4}) \backslash SL (P_{4})$.
\end{Thm}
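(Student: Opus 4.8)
The plan is to obtain the asserted bijection by gluing together the fiberwise bijections constructed in the paragraph immediately preceding the statement. Since $V: Um(P_0 \oplus R)/E(P_0 \oplus R) \to \tilde{V}_{SL}(R)$ is a map of sets, it partitions its source into the disjoint union of its fibers,
\[
Um(P_0 \oplus R)/E(P_0 \oplus R) = \bigsqcup_{\beta \in \tilde{V}_{SL}(R)} V^{-1}(\beta),
\]
so, the union in the statement being understood as this disjoint union, it suffices to exhibit for each $\beta$ a bijection of $V^{-1}(\beta)$ with one of the coset spaces $Sp(\chi_i)E(P_4)\backslash SL(P_4)$, in a way that runs through $I$ exactly once.

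First I would realize each $\chi_i$ as a value of the generalized Vaserstein symbol: by \cite[Lemma 4.4]{Sy} (used in exactly this way already in the lemma preceding Theorem \ref{T3.5}), for every $i \in I$ there is an $a_i \in Um(P_0 \oplus R)$ with a section $s_i$ such that $V(a_i, s_i) = \chi_i$ as alternating forms on $P_4$, whence $V(a_i) = [P_4, \chi_0 \perp \psi_2, \chi_i]$. Feeding $a = a_i$ and $\chi = \chi_i$ into the construction of the preceding paragraph --- whose surjectivity rests on Theorem \ref{T3.5} and whose injectivity rests on the normality of $E(P_4)$ in $SL(P_4)$ together with the proofs of \cite[Theorems 4.1 and 4.3]{Sy} --- then yields for each $i$ a bijection
\[
i_{a_i}: Sp(\chi_i)E(P_4)\backslash SL(P_4) \xrightarrow{\cong} V^{-1}(V(a_i)).
\]

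Finally I would invoke the hypothesis that $i \mapsto [P_4, \chi_0 \perp \psi_2, \chi_i]$ is a bijection $I \xrightarrow{\cong} \tilde{V}_{SL}(R)$; equivalently $i \mapsto V(a_i)$ is a bijection, so the fibers $V^{-1}(V(a_i))$ exhaust the $V^{-1}(\beta)$ without repetition. Substituting the $i_{a_i}$ into the disjoint decomposition above then gives the claimed bijection with $\bigsqcup_{i \in I} Sp(\chi_i)E(P_4)\backslash SL(P_4)$. Because all of the substantial work has already been carried out in establishing $i_a$, I expect no genuine obstacle here; the only points demanding care are organizational, namely that the representatives $a_i$ must be chosen to realize the prescribed forms $\chi_i$ (so that the stabilizer appearing is exactly $Sp(\chi_i)$ and not a conjugate), and that the union is disjoint because it is indexed by the distinct fibers of $V$.
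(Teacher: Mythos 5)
Your proposal is correct and is essentially the paper's own argument: the paper proves the theorem by the paragraph immediately preceding it (fiber decomposition of $V$, the bijections $i_a: Sp(\chi)E(P_4)\backslash SL(P_4) \xrightarrow{\cong} V^{-1}(V(a))$, and the hypothesis that $i \mapsto [P_4, \chi_0 \perp \psi_2, \chi_i]$ is a bijection), then simply states ``we have just established the following description.'' Your one addition --- explicitly invoking \cite[Lemma 4.4]{Sy} to choose representatives $a_i$ with $V(a_i,s_i)=\chi_i$ so that the stabilizers are literally $Sp(\chi_i)$ --- is a point the paper leaves implicit, and it is handled correctly.
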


\begin{Rem}
We remark that $SL (P_{4})/E (P_{4})$ is abelian if $R$ is a smooth affine algebra of dimension $4$ over an algebraically closed field $k$ such that $6 \in k^{\times}$ and $P_{0}$ is free: This follows from the fact that the map $SL_{4} (R)/E_{4} (R) \rightarrow SK_{1} (R)$ is injective in this situation (cp. \cite[Corollary 7.7]{FRS}). Hence the subgroup ${Sp} (\chi)E_{4} (R)$ of $SL_{4} (R)$ is normal and ${Sp} (\chi)E_{4} (R) \backslash SL_{4} (R) = SL_{4}/{Sp} (\chi)E_{4} (R)$.
\end{Rem}

Let us now describe the orbit space $Um (P_{0} \oplus R)/SL (P_{0} \oplus R)$. Analogously, we consider the surjective map $V: Um (P_{0} \oplus R)/SL (P_{0} \oplus R) \rightarrow \tilde{V}_{SL} (R)$ and describe the preimages $V^{-1} (V(a))$ for $a \in Um (P_{0} \oplus R)$. Henceforth we assume that $SL (P_{4})/E (P_{4})$ is an abelian group. By repeating the arguments above appropriately, we obtain a bijection

\begin{center}
$i_{a}: SL (P_{4})/{Sp} (\chi)SL (P_{3})E (P_{4}) \xrightarrow{\cong} V^{-1} (V(a))$.
\end{center}

\begin{Thm}
Let $\{\chi_{i}\}_{i \in I}$ be a set of non-degenerate alternating forms on $P_{4}$ such that $I \rightarrow \tilde{V}_{SL} (R), i \mapsto [P_{4}, \chi_{0} \perp \psi_{2}, \chi_{i}]$, is a bijection. Furthermore, assume that $SL (P_{4})/E (P_{4})$ is an abelian group. Then there is a bijection $Um (P_{0} \oplus R)/SL (P_{0} \oplus R) \cong \cup_{i \in I} SL (P_{4})/{Sp} (\chi_{i})SL (P_{3})E (P_{4})$.
\end{Thm}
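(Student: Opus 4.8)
The plan is to assemble the theorem from the fibrewise bijection $i_a$ established in the discussion immediately preceding the statement, together with the surjectivity of the generalized Vaserstein symbol modulo $SL$. Since $SL(P_5)$ acts transitively on $Um(P_5)$ by the standing hypothesis of this section, Theorem \ref{T3.2} guarantees that $V: Um(P_0 \oplus R)/SL(P_0 \oplus R) \rightarrow \tilde{V}_{SL}(R)$ is surjective. As a map of sets it partitions its source into its fibres, and surjectivity means every fibre $V^{-1}(\beta)$ for $\beta \in \tilde{V}_{SL}(R)$ is nonempty; thus $Um(P_0 \oplus R)/SL(P_0 \oplus R) = \bigcup_{\beta \in \tilde{V}_{SL}(R)} V^{-1}(\beta)$ as a disjoint union, and it suffices to identify each fibre.

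First I would fix, for every $i \in I$, a convenient representative of the corresponding fibre. By \cite[Lemma 4.4]{Sy} each class of the form $[P_4, \chi_0 \perp \psi_2, \chi_i]$ lies in the image of the generalized Vaserstein symbol; more precisely, that lemma lets me choose $a_i \in Um(P_0 \oplus R)$ together with a section $s_i$ of $a_i$ such that $V(a_i, s_i) = \chi_i$. Then $V(a_i) = [P_4, \chi_0 \perp \psi_2, \chi_i]$, and because $i \mapsto [P_4, \chi_0 \perp \psi_2, \chi_i]$ is by hypothesis a bijection $I \to \tilde{V}_{SL}(R)$, the classes $V(a_i)$ run exactly once through every element of $\tilde{V}_{SL}(R)$. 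Hence the fibres $V^{-1}(V(a_i))$ for $i \in I$ are precisely the fibres occurring in the disjoint union above.

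Next I would invoke, for each such $a_i$, the bijection
\[
i_{a_i}: SL(P_4)/Sp(\chi_i)SL(P_3)E(P_4) \xrightarrow{\cong} V^{-1}(V(a_i))
\]
obtained just before the statement. This is where the hypothesis that $SL(P_4)/E(P_4)$ is abelian enters: it forces $Sp(\chi_i)SL(P_3)E(P_4)$ to be a normal subgroup of $SL(P_4)$, so that the left-hand side is a genuine quotient group. Taking the union over $i \in I$ of these bijections then yields the asserted bijection $Um(P_0 \oplus R)/SL(P_0 \oplus R) \cong \bigcup_{i \in I} SL(P_4)/Sp(\chi_i)SL(P_3)E(P_4)$.

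The step I expect to require the most care is the fibrewise bijection $i_{a_i}$ itself, specifically the appearance of the extra factor $SL(P_3)$ in the denominator compared with the $E$-version of the previous theorem. The point to verify is that, under the embedding $SL(P_3) = SL(P_0 \oplus R) \hookrightarrow SL(P_4)$, $\sigma \mapsto \sigma \oplus id_{Re_4}$, the right action of $SL(P_4)$ on $Um(P_0 \oplus R)/E(P_0 \oplus R)$ restricts to precomposition by $SL(P_0 \oplus R)$; consequently the further quotient by the $SL(P_0 \oplus R)$-action that turns $E$-orbits into $SL$-orbits corresponds exactly to enlarging the stabilizing subgroup $Sp(\chi_i)E(P_4)$ to $Sp(\chi_i)SL(P_3)E(P_4)$. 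I would establish this compatibility by rerunning the argument that produced $i_a$ for the $E$-orbit space, now using the identity $(\varphi \oplus 1)^t V(a,s)(\varphi \oplus 1) = V(a\varphi, s')$ from the proofs of \cite[Theorems 4.1 and 4.3]{Sy} with $\varphi \in SL(P_0 \oplus R)$ in place of $\varphi \in E(P_0 \oplus R)$.
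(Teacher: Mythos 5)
Your proposal is correct and takes essentially the same route as the paper: the paper likewise decomposes $Um(P_0 \oplus R)/SL(P_0 \oplus R)$ into the fibres of the surjective map $V$ and identifies each fibre with $SL(P_4)/Sp(\chi_i)SL(P_3)E(P_4)$ by ``repeating the arguments'' of the $E$-orbit case, which is precisely the adaptation you outline. Your key verification---that the right $SL(P_4)$-action restricted along $SL(P_3) = SL(P_0 \oplus R) \hookrightarrow SL(P_4)$ is precomposition, so passing from $E$-orbits to $SL$-orbits enlarges the stabilizer from $Sp(\chi_i)E(P_4)$ to $Sp(\chi_i)SL(P_3)E(P_4)$, with the abelian hypothesis on $SL(P_4)/E(P_4)$ guaranteeing this is a (normal) subgroup and a genuine quotient---is exactly what the paper leaves implicit.
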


\begin{Kor}
Let $R$ be a smooth affine algebra of dimension $\leq 4$ over an algebraically closed field $k$ of characteristic $\neq 2,3$. Furthermore, let $\{\chi_{i}\}_{i \in I}$ be a set of non-degenerate skew-symmetric forms on $R^{4}$ such that the map $I \rightarrow \tilde{V}_{SL} (R), i \mapsto [R^{4}, \chi_{0} \perp \psi_{2}, \chi_{i}]$, is a bijection. Then there is a bijection $\mathit{Um}_{3} (R)/SL_{3} (R) \cong \cup_{i \in I} SL_{4} (R)/{Sp} (\chi_{i})SL_{3} (R)E_{4} (R)$.
\end{Kor}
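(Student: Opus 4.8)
The plan is to deduce this corollary as the special case $P_{0} = R^{2}$ of the preceding theorem, so that the only real work is to check that the two standing hypotheses of that theorem are satisfied under the stated conditions on $R$ and $k$. First I would fix the canonical trivialization $\theta_{0}$ of $\det (R^{2})$ given by $1 \mapsto e_{1} \wedge e_{2}$ and record the identifications $P_{3} = R^{3}$, $P_{4} = R^{4}$, $Um (P_{0} \oplus R) = \mathit{Um}_{3} (R)$, $SL (P_{0} \oplus R) = SL_{3} (R)$, $SL (P_{4}) = SL_{4} (R)$, $SL (P_{3}) = SL_{3} (R)$ and $E (P_{4}) = E_{4} (R)$. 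Under these identifications the prescribed non-degenerate skew-symmetric forms $\chi_{i}$ on $R^{4}$ are exactly the alternating forms required by the theorem, the indexing map $i \mapsto [R^{4}, \chi_{0} \perp \psi_{2}, \chi_{i}]$ is the required bijection onto $\tilde{V}_{SL} (R)$, and the conclusion of the previous theorem reads verbatim as the asserted bijection. Thus everything reduces to verifying its hypotheses.

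The first hypothesis is that $SL (P_{5}) = SL_{5} (R)$ acts transitively on $Um (P_{5}) = \mathit{Um}_{5} (R)$. Here I would invoke the remark following Theorem \ref{T3.2}: an algebraically closed field is infinite, perfect and of cohomological dimension $0 \leq 1$, and $\mathrm{char} (k) \neq 2,3$ gives $6 \in k^{\times}$, so the transitivity holds by \cite{S1}, \cite{S4} and \cite{B}. The second hypothesis is that $SL_{4} (R)/E_{4} (R)$ is an abelian group. For this I would appeal to the remark preceding the previous theorem: since $P_{0} = R^{2}$ is free and $R$ is smooth affine over an algebraically closed field with $6 \in k^{\times}$, the map $SL_{4} (R)/E_{4} (R) \to SK_{1} (R)$ is injective by \cite[Corollary 7.7]{FRS}, whence $SL_{4} (R)/E_{4} (R)$ embeds into the abelian group $SK_{1} (R)$ and is itself abelian.

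The one point requiring care, and the main obstacle in an otherwise formal deduction, is the range $\dim R \leq 4$ rather than $\dim R = 4$, since the cited injectivity of $SL_{4} (R)/E_{4} (R) \to SK_{1} (R)$ is stated for fourfolds. In dimension $\leq 3$ I would instead argue that $SL_{4} (R)/E_{4} (R)$ is already abelian by Bass--Vaserstein stability (for $\dim R \leq 2$ one has $SL_{4} (R)/E_{4} (R) \cong SK_{1} (R)$, and the dimension $3$ case follows likewise from the improved stability results for such algebras used in Section \ref{3.2}). Once abelianness is secured in every dimension $\leq 4$, the previous theorem applies directly and yields the bijection $\mathit{Um}_{3} (R)/SL_{3} (R) \cong \cup_{i \in I} SL_{4} (R)/{Sp} (\chi_{i}) SL_{3} (R) E_{4} (R)$, completing the proof.
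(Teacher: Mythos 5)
Your reduction is exactly the paper's: the corollary carries no proof in the text precisely because it is the case $P_{0} = R^{2}$ (with the canonical trivialization of $\det(R^{2})$) of the preceding theorem, whose two hypotheses are meant to be supplied by the remark following Theorem \ref{T3.2} (transitivity of $SL_{5}(R)$ on $\mathit{Um}_{5}(R)$) and by the Remark in Section \ref{3.3} (abelianness of $SL_{4}(R)/E_{4}(R)$ via \cite[Corollary 7.7]{FRS}). Your verification of both hypotheses in dimension $4$, and of the transitivity hypothesis in all dimensions $\leq 4$, is correct, and your identifications under $P_{0}=R^{2}$ are the intended ones.

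The soft spot is the step you flag yourself: abelianness of $SL_{4}(R)/E_{4}(R)$ when $\dim R = 3$. Your argument for $\dim R \leq 2$ is fine, since then $4 \geq \dim R + 2$ and Bass--Vaserstein injective stability gives $SL_{4}(R)/E_{4}(R) \cong SK_{1}(R)$. But the results ``used in Section \ref{3.2}'' are Suslin's and Bhatwadekar's theorems \cite{S1}, \cite{S4}, \cite{B} and Bass's theorem \cite{HB}; these are transitivity and cancellation statements for the actions of $SL(P_{5})$, resp.\ $E(P_{n})$ for $n \geq 5$, on unimodular elements, and none of them yields injectivity of $SL_{4}(R)/E_{4}(R) \rightarrow SK_{1}(R)$ for a threefold --- that is injective $K_{1}$-stability one level below the Bass--Vaserstein range, a genuinely different kind of theorem, so the step as justified would fail. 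Two ways to close the gap: (i) invoke Rao and van der Kallen's improved injective stability theorem for non-singular affine algebras over perfect fields of cohomological dimension $\leq 1$ with $6$ invertible, which gives $SL_{4}(R)/E_{4}(R) \cong SK_{1}(R)$ when $\dim R = 3$ (this reference is not in the paper's bibliography and would need to be added); or (ii) bypass abelianness for $\dim R = 3$ by noting that both sides of the asserted bijection are then singletons: $\mathit{Um}_{3}(R)/SL_{3}(R)$ is trivial by the proof of the corollary on smooth affine threefolds in Section \ref{3.2}, hence $\tilde{V}_{SL}(R) = 0$ by Theorem \ref{T3.2} and $I$ is a singleton, while transitivity of $Sp(\chi_{i})$ on unimodular columns (transported from that same corollary via Lemmas \ref{L2.1} and \ref{L3.4}) gives $SL_{4}(R) = Sp(\chi_{i})SL_{3}(R)E_{4}(R)$. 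Since the paper's Remark records abelianness only in dimension exactly $4$, your attention to the low-dimensional cases is warranted; it just needs a correct justification.
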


\subsection{Equality of linear and symplectic orbits}\label{Equality of linear and symplectic orbits}\label{3.4}

Let $R$ be a smooth affine algebra of even dimension $d$ over an algebraically closed field $k$ with $d! \in k^{\times}$. Motivated by the previous sections, we then study the orbits of unimodular rows of length $d$ under the right actions of $SL_{d} (R)$ and $Sp_{d} (R)$. We will use this to prove the equality $SL_{d} (R) e_{d} = Sp_{d} (R) e_{d}$. Since we have $SL_{d} (R) e_{d} = \mathit{Um}_{d} (R)$ in this case (cp. \cite[Theorem 7.5]{FRS}), this means that one has to prove that $Sp_{d} (R)$ acts transitively on the left on $\mathit{Um}_{d}^{t} (R)$.\\
As indicated, we will approach this problem in terms of the right actions of $SL_{d} (R)$ and $Sp_{d} (R)$ on $\mathit{Um}_{d} (R)$. For the remainder of this section, we let $\pi_{1,d}=(1,0,...,0)$ and $\pi_{d,d}=(0,...,0,1)$ be the standard unimodular rows of length $d$ and $e_{1,d} = \pi_{1,d}^{t}$ and $e_{d,d} = \pi_{d,d}^{t}$ the corresponding unimodular columns. As a first step, let us recall some basic facts about symplectic and elementary symplectic orbits. The following result by Gupta is a special case of \cite[Theorem 3.9]{G} and extends \cite[Theorem 5.5]{CR}:

\begin{Thm}
Let $R$ be a commutative ring. For any $n \in \mathbb{N}$ and unimodular row $v \in \mathit{Um}_{2n} (R)$, the equality $v E_{2n} (R) = v E{Sp}_{2n} (R)$ holds.
\end{Thm}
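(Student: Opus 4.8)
The plan is to prove the two inclusions separately. Since every elementary symplectic generator relative to the form $\psi_{2n}$ is in particular a product of ordinary elementary matrices (the ``long root'' generator $I+\lambda e_{i\sigma(i)}$ is itself elementary, and a ``short root'' generator $I+\lambda e_{ij}-\epsilon\lambda e_{\sigma(j)\sigma(i)}$ is a product of two commuting elementaries), one has $E{Sp}_{2n}(R)\subseteq E_{2n}(R)$, whence $v\,E{Sp}_{2n}(R)\subseteq v\,E_{2n}(R)$ for free. The content is the reverse inclusion $v\,E_{2n}(R)\subseteq v\,E{Sp}_{2n}(R)$. To organize it, I would first reduce to a single generator acting on an arbitrary unimodular row: it suffices to show that for every $w\in \mathit{Um}_{2n}(R)$ and every $E_{ij}(\lambda)=I+\lambda e_{ij}$ with $i\neq j$ one has $w\,E_{ij}(\lambda)\in w\,E{Sp}_{2n}(R)$. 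Granting this, an induction on the word length of an element of $E_{2n}(R)$, using that ``lying in a common $E{Sp}_{2n}(R)$-orbit'' is an equivalence relation, gives the claim: if $w=v\sigma$ with $\sigma\in E{Sp}_{2n}(R)$, then $w\,E_{ij}(\lambda)$ lies in the $E{Sp}_{2n}(R)$-orbit of $w$, hence of $v$.

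For the single-generator statement I would split into cases according to the symplectic pairing of coordinates induced by $\psi_{2n}$ (write $\sigma$ for the involution pairing $(1,2),(3,4),\dots$). If $j=\sigma(i)$, then $E_{ij}(\lambda)$ already lies in the copy of $Sp_2=SL_2$ on the corresponding hyperbolic plane, so it is itself an elementary symplectic generator and nothing is to be shown. The genuine case is $j\notin\{i,\sigma(i)\}$: here the relevant short root generator does not perform the pure move $w\mapsto w+\lambda w_i e_j$ but the coupled move $w\mapsto w+\lambda w_i e_j-\epsilon\lambda w_{\sigma(j)}e_{\sigma(i)}$, because $e_{ij}$ alone does not lie in the symplectic Lie algebra when $j\neq\sigma(i)$. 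The coupling is therefore genuinely forced, and the pure single-coordinate move is \emph{not} a symplectic group element; this is why unimodularity of $w$ is indispensable and why one works with the orbit rather than the group.

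The main obstacle is precisely to cancel the parasitic change of the $\sigma(i)$-coordinate while staying inside $v\,E{Sp}_{2n}(R)$. The difficulty is that any further symplectic move touching the coordinate $\sigma(i)$ is itself coupled and, in general, feeds back into the target coordinate $j$ (this is already visible for $2n=4$, where $\{i,j,\sigma(i),\sigma(j)\}$ exhausts the coordinates and no ``free'' hyperbolic pair is available). I would resolve it by isolating a lemma that performs an exact two-coordinate cancellation: after the coupled move, one applies an auxiliary sequence of short root generators routing the unwanted term through the remaining coordinates, with scalars solved from a section $b$ of $w$ (i.e. $\sum_k b_k w_k=1$) so that the accumulated effect on every coordinate except $j$ vanishes. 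This coupled-cancellation step, carried out for an \emph{arbitrary} $v$, is the technical heart of the theorem and is exactly the content of Gupta's generalization of the Chattopadhyay--Rao argument; I expect the bookkeeping of signs and the use of the section $b$ to be the only real work, with the reduction above being formal.
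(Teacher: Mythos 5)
The paper gives no proof of this statement at all: it is quoted as a known result, a special case of \cite[Theorem 3.9]{G} extending \cite[Theorem 5.5]{CR}. Your formal scaffolding --- the inclusion $E{Sp}_{2n}(R) \subseteq E_{2n}(R)$ via the decomposition of short-root generators into commuting elementaries, the word-length induction reducing to a single generator $E_{ij}(\lambda)$, and the case split on $j = \sigma(i)$ versus $j \notin \{i,\sigma(i)\}$ --- is correct, and since you explicitly defer the remaining coupled-cancellation step to precisely Gupta's theorem, your proposal rests on the same citation as the paper and is in substance the same treatment (indeed a more detailed one).
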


\begin{Kor}
Let $R$ be a commutative ring. If $v,v' \in \mathit{Um}_{2n} (R)$ for some $n \in \mathbb{N}$ and $v E_{2n} (R) = v' E_{2n} (R)$, then $v Sp_{2n} (R) = v' Sp_{2n} (R)$.
\end{Kor}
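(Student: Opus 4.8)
The plan is to derive this statement directly from the preceding theorem of Gupta, reducing everything to the elementary observation that the elementary symplectic group is a subgroup of the full symplectic group. No genuine computation should be required; the work lies entirely in correctly translating the orbit equalities into one another.

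First I would unpack the hypothesis $v E_{2n} (R) = v' E_{2n} (R)$. Since the groups in question act on the right on $\mathit{Um}_{2n} (R)$, this equality of orbits says in particular that $v'$ lies in the orbit $v E_{2n} (R)$, so that $v' = v\varepsilon$ for some $\varepsilon \in E_{2n} (R)$. Next I would invoke the previous theorem, which asserts $v E_{2n} (R) = v E{Sp}_{2n} (R)$ for every $v \in \mathit{Um}_{2n} (R)$. Combining these, $v'$ lies in $v E{Sp}_{2n} (R)$ as well, i.e. $v' = v\sigma$ for some elementary symplectic matrix $\sigma \in E{Sp}_{2n} (R)$.

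Finally I would use the inclusion $E{Sp}_{2n} (R) \subseteq Sp_{2n} (R)$, which holds by definition of the elementary symplectic group. Thus $\sigma$ is a symplectic matrix, and so $v' = v\sigma \in v Sp_{2n} (R)$. Since $\sigma \in Sp_{2n} (R)$ we have $\sigma Sp_{2n} (R) = Sp_{2n} (R)$, and therefore $v' Sp_{2n} (R) = v\sigma Sp_{2n} (R) = v Sp_{2n} (R)$, as claimed. The only point that requires any care is the faithful passage from the orbit equality supplied by Gupta's theorem to the membership $v' \in v E{Sp}_{2n} (R)$; once that is in place, the conclusion is immediate, so I do not expect any substantive obstacle here.
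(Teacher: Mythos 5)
Your proof is correct and is exactly the intended argument: the paper states this corollary without proof precisely because it follows immediately from Gupta's theorem via the inclusion $E{Sp}_{2n}(R) \subseteq Sp_{2n}(R)$, which is the deduction you carry out. Nothing is missing; the passage from the orbit equality to $v' \in v\,E{Sp}_{2n}(R)$ and then to equality of $Sp_{2n}(R)$-orbits is handled faithfully.
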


\begin{Thm}\label{T3.17}
Let $R$ be a smooth algebra of dimension $d \geq 4$ over an algebraically closed field $k$ with $d! \in k^{\times}$. Assume that $d$ is divisible by $4$. Then $Sp_{d} (R)$ acts transitively on $\mathit{Um}_{d} (R)$.
\end{Thm}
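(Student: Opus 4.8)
The plan is to derive transitivity from the two orbit-comparison facts already in place: that $SL_{d}(R)$ acts transitively on $\mathit{Um}_{d}(R)$ (so $\mathit{Um}_{d}(R)=e_{d}\cdot SL_{d}(R)$), and that the elementary orbit of any row is contained in its symplectic orbit (the corollary to Gupta's theorem recalled above). Together these produce a canonical surjection $\mathit{Um}_{d}(R)/E_{d}(R)\twoheadrightarrow \mathit{Um}_{d}(R)/Sp_{d}(R)$, and $Sp_{d}(R)$ is transitive exactly when the target is a single point. Since $d\geq 3$, I identify the source with $[X,\mathbb{A}^{d}\setminus 0]_{\mathbb{A}^{1}}$ for $X=Spec(R)$; as $\mathbb{A}^{d}\setminus 0\simeq_{\mathbb{A}^{1}}\Sigma_{s}^{d-1}\mathbb{G}_{m}^{\wedge d}$ is $(d-2)$-connected with $\pi_{d-1}^{\mathbb{A}^{1}}(\mathbb{A}^{d}\setminus 0)\cong \textbf{K}^{MW}_{d}$, and $\dim X=d$, obstruction theory collapses this set to the single group $H^{d}(X,\textbf{K}^{MW}_{d})=\widetilde{CH}^{d}(X)$. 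Thus the whole problem becomes: every class of $\widetilde{CH}^{d}(X)$ must die in $\mathit{Um}_{d}(R)/Sp_{d}(R)$.

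To access transitivity directly I would model the orbit map by the morphism of schemes $Sp_{d}\to \mathbb{A}^{d}\setminus 0$ taking the first column. Because $Sp_{d}$ acts transitively on $\mathbb{A}^{d}\setminus 0$ over $k$ with stabilizer an extension of $Sp_{d-2}$ by an $\mathbb{A}^{1}$-contractible unipotent group, this morphism fits, in the spirit of the fiber sequences recalled above, into an $\mathbb{A}^{1}$-fiber sequence $Sp_{d-2}\hookrightarrow Sp_{d}\to \mathbb{A}^{d}\setminus 0$. By affine representability, surjectivity of $Sp_{d}(R)\to \mathit{Um}_{d}(R)$ modulo the identifications above is governed by the induced long exact sequence: a row $v\colon X\to \mathbb{A}^{d}\setminus 0$ lifts to $Sp_{d}$ precisely when a succession of obstructions in $H^{i+1}(X,\pi_{i}^{\mathbb{A}^{1}}(Sp_{d-2}))$, $1\leq i\leq d-1$, vanishes; all higher obstructions are automatically zero since $\dim X=d$. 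The Suslin-matrix morphism $\alpha_{d}\colon \mathbb{A}^{d}\setminus 0\simeq_{\mathbb{A}^{1}}Q_{2d-1}\to SL$ supplies the explicit cocycle computing the top connecting map, and here the hypothesis $4\mid d$ enters first: for $d\equiv 0 \pmod 4$ the Suslin matrix $\alpha_{d}(v,w)$ is symplectic, so its class lies in the image of $K_{1}Sp(R)\to SK_{1}(R)$ and is annihilated by the hyperbolic map $SK_{1}(R)\to W_{E}(R)$ of the Karoubi periodicity sequence.

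The heart of the matter is the vanishing of the top obstruction in $H^{d}(X,\pi_{d-1}^{\mathbb{A}^{1}}(Sp_{d-2}))$. Writing $d-2=2m$, this is the first unstable symplectic homotopy sheaf $\pi_{2m+1}^{\mathbb{A}^{1}}(Sp_{2m})$, sitting just above the stable range, and $4\mid d$ forces $d-1\equiv 3 \pmod 4$, the degree in which the symplectic Grothendieck–Witt homotopy sheaves are of Witt type. I would identify this sheaf in the relevant range with a Grothendieck–Witt (Witt/$\textbf{I}^{j}$) sheaf and control its top Zariski cohomology over $X$ by the Gersten–Grothendieck–Witt spectral sequence, exactly as in Propositions \ref{P2.6} and \ref{P2.8}. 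Two features of the base then squeeze the obstruction to zero: since $k$ is algebraically closed the surviving top cohomology group is $2$-primary torsion (its Chow/Chow–Witt constituents are $2$-torsion, compatibly with the $2$-divisibility of $CH^{d}(X)$), while $d!\in k^{\times}$ together with the divisibility of the Suslin class makes it $2$-divisible; a group that is simultaneously $2$-torsion and $2$-divisible vanishes.

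The main obstacle, and the step demanding the most care, is precisely this last vanishing: pinning down $\pi_{d-1}^{\mathbb{A}^{1}}(Sp_{d-2})$ in the unstable range as the correct Grothendieck–Witt sheaf, bridging the stable Suslin-matrix/$W_{E}$ computation back to the unstable $Sp_{d}$-orbit, and checking that over an algebraically closed field the residual cohomology is $2$-primary and is killed by the interplay of $4\mid d$ (Grothendieck–Witt $4$-periodicity) and $d!\in k^{\times}$ (divisibility). Granting this, every $v\in \mathit{Um}_{d}(R)$ lifts along $Sp_{d}\to \mathbb{A}^{d}\setminus 0$, the quotient $\mathit{Um}_{d}(R)/Sp_{d}(R)$ is trivial, and hence $Sp_{d}(R)$ acts transitively on $\mathit{Um}_{d}(R)$.
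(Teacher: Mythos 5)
Your proposal has a genuine gap, and it sits exactly where you place it yourself: the computation of $\pi_{d-1}^{\mathbb{A}^{1}}(Sp_{d-2})$ and the vanishing of the top obstruction in $H^{d}(X,\pi_{d-1}^{\mathbb{A}^{1}}(Sp_{d-2}))$. Writing $d-2=2m$, the fiber sequences $Sp_{2n-2}\hookrightarrow Sp_{2n}\rightarrow Sp_{2n}/Sp_{2n-2}\simeq_{\mathbb{A}^{1}}\mathbb{A}^{2n}\setminus 0$ only identify $\pi_{i}^{\mathbb{A}^{1}}(Sp_{2m})$ with the stable sheaf for $i\leq 2m-1$, so $\pi_{2m+1}^{\mathbb{A}^{1}}(Sp_{2m})$ lies two degrees beyond the stable range; no identification of this unstable sheaf with a Grothendieck--Witt sheaf is available, and Propositions \ref{P2.6} and \ref{P2.8} of the paper concern cohomology of the \emph{stable} sheaves $\textbf{GW}_{i}^{3}$ and say nothing about it. So the decisive step of your argument is an assertion ("Granting this\dots"), not a proof. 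Two subsidiary claims are also off. First, for rows of length $d$ over a $d$-dimensional base, $\mathbb{A}^{d}\setminus 0$ is only $(d-2)$-connected, so $[X,\mathbb{A}^{d}\setminus 0]_{\mathbb{A}^{1}}$ is built from \emph{two} Postnikov stages, $H^{d-1}(X,\textbf{K}^{MW}_{d})$ and a quotient of $H^{d}(X,\pi_{d}^{\mathbb{A}^{1}}(\mathbb{A}^{d}\setminus 0))$; the collapse to a single group $H^{d}(X,\textbf{K}^{MW}_{d+1})$ happens for rows of length $d+1$, not $d$. Second, the claim that $\alpha_{d}(v,w)$ "is symplectic" when $4\mid d$ is unsubstantiated (note $\alpha_{d}(v,w)$ has size $2^{d-1}$, and being an isometry of \emph{some} alternating form would not by itself place its class in the image of $K_{1}Sp(R)\rightarrow K_{1}(R)$ for the standard hyperbolic form).

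It is worth seeing how the paper's proof is engineered precisely to avoid any unstable computation. It first reduces, via the Fasel--Rao--Swan argument and Gupta's theorem ($vE_{d}(R)=vESp_{d}(R)$), to rows of the special form $(a_{1},\dots,a_{d-1},a_{d}^{((d-1)!)^{2}})$, which Suslin completes to a matrix $\beta(a,b)\in SL_{d}(R)$ with $[\beta(a,b)]=[\alpha_{d}(a,b)]$ in $SK_{1}(R)$. The only homotopy-theoretic input is then \emph{stable}: because the last coordinate is a $(d-1)!$-th power, the class of $\alpha_{d}\circ p_{2d-1}^{-1}\Psi^{(d-1)!}$ in $[\mathbb{A}^{d}\setminus 0,GL]_{\mathbb{A}^{1}}$ is $(d-1)!$ times a fixed class (motivic Brouwer degree over an algebraically closed field plus Eckmann--Hilton), and its image in $[\mathbb{A}^{d}\setminus 0,GL/Sp]_{\mathbb{A}^{1}}\cong W_{E}(S_{2d-1})\cong\mathbb{Z}/2\mathbb{Z}$ dies because $(d-1)!$ is even; this computation of $W_{E}(S_{2d-1})$ by the Gersten--Grothendieck--Witt spectral sequence is where $4\mid d$ actually enters. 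The descent from this stable statement back to the unstable group $Sp_{d}(R)$ is pure algebra: $Sp(R)=ESp(R)Sp_{d}(R)$, injectivity of $SL_{d}(R)/E_{d}(R)\rightarrow SK_{1}(R)$, and $\pi_{1,d}E_{d}(R)=\pi_{1,d}ESp_{d}(R)$. If you want to rescue your approach, you would need to replace the unstable obstruction-theoretic step by exactly this kind of stabilization, since $\pi_{d-1}^{\mathbb{A}^{1}}(Sp_{d-2})$ is out of reach.
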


\begin{proof}
It follows from the proof of \cite[Theorem 7.5]{FRS} that any unimodular row of length $d$ can be transformed via elementary matrices to a row of the form $(a_{1},...,a_{d-1}, a_{d}^{{(d-1)!}^{2}})$. By the previous corollary, it thus suffices to show that any such row of length $d$ is the first row of a symplectic matrix.\\
So let $a = (a_{1},...,a_{d-1}, a_{d}^{(d-1)!})$ and let $b = (b_{1}, ..., b_{d-1},b_{d})$ be a unimodular row such that $a b^{t} = 1$. Furthermore, let $a' = (a_{1},...,a_{d-1}, a_{d}^{{(d-1)!}^{2}})$. It follows from \cite[Proposition 2.2, Corollary 2.5]{S4} that there exists a matrix $\beta (a,b) \in SL_{d} (R)$ whose first row is $a'$ such that $[\beta (a,b)] = [\alpha_{d} (a,b)]$ in $SK_{1} (R)$.\\
Now let us first assume that the class of $\alpha_{d} (a,b)$ in $K_{1} (R)$ lies in the image of the forgetful map $K_{1}{Sp} (R) \xrightarrow{f} K_{1} (R)$. As a matter of fact, it is well-known that ${Sp} (R) = {ESp}(R){Sp}_{d}(R)$ (cp. \cite[Theorem 7.3(b)]{SV}). Hence the class of $\alpha_{d} (a,b)$ lies in the image of the composite $Sp_{d} (R) \rightarrow K_{1}{Sp}(R) \xrightarrow{f} K_{1} (R)$. In other words, there is a matrix $\varphi \in Sp_{d} (R)$ with $[\varphi] = [\alpha_{d} (a,b)] = [\beta (a,b)]$ in $K_{1} (R)$. As the homomorphism $SL_{d} (R)/E_{d} (R) \rightarrow SK_{1} (R)$ is injective (cp. \cite[Corollary 7.7]{FRS}), it follows that $\beta (a,b) {\varphi}^{-1} \in E_{d} (R)$. Since furthermore the equality $\pi_{1,d} E_{d} (R) = \pi_{1,d} {ESp}_{d} (R)$ holds, there is $\psi \in {ESp}_{d} (R)$ such that $\pi_{1,d}\beta (a,b) {\varphi}^{-1} = \pi_{1,d} \psi$. In particular, $a' = \pi_{1,d} \beta (a,b) = \pi_{1,d} \psi \varphi$ lies in the orbit of $\pi_{1,d}$ under the action of $Sp_{d} (R)$.\\
Thus, it suffices to show that the class of $\alpha_{d} (a,b)$ in $K_{1} (R)$ indeed lies in the image of $K_{1}{Sp}(R) \xrightarrow{f} K_{1} (R)$. For this purpose, recall from Section \ref{2.4} that we have canonical identifications $Um_{d} (R) \cong Hom_{Sm_{k}} (X, \mathbb{A}^{d}\setminus 0)$ and $Um_{d} (R)/E_{d} (R) \cong [X, \mathbb{A}^{d}\setminus 0]_{\mathbb{A}^{1}}$. Hence a unimodular row of length $d$ over $R$ corresponds to a morphism $X = Spec (R) \rightarrow \mathbb{A}^{d}\setminus 0$ and there is a canonical pointed $\mathbb{A}^{1}$-weak equivalence $p_{2d-1} : Q_{2d-1} \rightarrow \mathbb{A}^{d}\setminus0$. As a matter of fact, a morphism $X \rightarrow Q_{2d-1}$ corresponds to a unimodular row of length $d$ with the choice of an explicit section. Furthermore, there is an $\mathbb{A}^{1}$-fiber sequence $Sp \rightarrow GL \rightarrow GL/Sp$, which induces the Karoubi periodicity sequence by taking the sets of morphisms in $H (k)$. Moreover, there is a pointed morphism $\alpha_{d}: Q_{2d-1} \rightarrow SL \hookrightarrow GL$ induced by $\alpha_{d} (x,y)$.\\
Let $a'' = (a_{1},..., a_{d-1}, a_{d}) \in \mathit{Um}_{d} (R)$. We now interpret this unimodular row as a morphism $a'': X \rightarrow \mathbb{A}^{d}\setminus 0$ of spaces. If we let $\Psi^{(d-1)!}: \mathbb{A}^{d}\setminus 0 \rightarrow \mathbb{A}^{d}\setminus 0$ be the morphism induced by $(x_{1},...,x_{d-1}, x_{d}) \mapsto (x_{1},...,x_{d-1},x_{d}^{(d-1)!})$, then we obviously have $a = \Psi^{(d-1)!} a'': X \rightarrow \mathbb{A}^{d}\setminus 0$. It thus suffices to prove the existence of a morphism $\mathbb{A}^{d}\setminus 0 \rightarrow Sp$ in $H (k)$ that makes the diagram

\begin{center}
$\begin{xy}
  \xymatrix{
      \mathbb{A}^{d}\setminus 0 \ar[r]^{p_{2d-1}^{-1} \Psi^{(d-1)!}} \ar@{.>}[d]    & Q_{2d-1} \ar[d]^{\alpha_{d}}  \\
      Sp \ar[r]             &   GL \ar[r] &  GL/Sp
  }
\end{xy}$
\end{center}

commutative. For this purpose, we first of all note that the motivic Brouwer degree of $\Psi^{(d-1)!} \in [\mathbb{A}^{d}\setminus 0, \mathbb{A}^{d} \setminus 0]_{\mathbb{A}^{1},\bullet} = GW (k)$ is ${(d-1)!}_{\epsilon}$. Since $k$ is algebraically closed, we know that $(d-1)!_{\epsilon} = (d-1)! \in GW(k)$. Hence it follows that $\alpha_{d} p_{2d-1}^{-1}\Psi^{(d-1)!}$ equals $(d-1)! \cdot \alpha_{d}p_{2d-1}^{-1} \in [\mathbb{A}^{d}\setminus 0, GL]_{\mathbb{A}^{1},\bullet}$, where the group structure is understood with respect to the structure of $\mathbb{A}^{d}\setminus 0$ as an $h$-cogroup in $H_{\bullet} (k)$. The usual Eckmann-Hilton argument then implies that also $\alpha_{d} p_{2d-1}^{-1}\Psi^{(d-1)!} = (d-1)! \cdot \alpha_{d}p_{2d-1}^{-1}$ in $[\mathbb{A}^{d}\setminus 0, GL]_{\mathbb{A}^{1},\bullet}$, where the group structure is understood with respect to the structure of an $h$-group of $GL \simeq_{\mathbb{A}^{1}} \mathcal{R}\Omega_{s}BGL$ in $H_{\bullet} (k)$. This is the group structure corresponding to the identification $[\mathbb{A}^{d}\setminus 0, GL]_{\mathbb{A}^{1}} \cong K_1 (\mathbb{A}^{d}\setminus 0)$. As $[\mathbb{A}^{d}\setminus 0, GL/Sp]_{\mathbb{A}^{1}} \cong W_E (S_{2d-1}) \cong \mathbb{Z}/2\mathbb{Z}$ and $(d-1)!$ is even, it follows that

\begin{center}
${(\mathbb{A}^{d}\setminus 0)} \xrightarrow{p_{2d-1}^{-1}\Psi^{(d-1)!}} Q_{2d-1} \xrightarrow{\alpha_{d}} GL \rightarrow GL/Sp$
\end{center}

is trivial and hence the factorization exists, as desired.
\end{proof}

As a consequence, we can prove a corresponding statement for the left action of $Sp_{d} (R)$ on $\mathit{Um}_{d}^{t} (R)$:

\begin{Kor}\label{C3.18}
Let $R$ be a smooth affine algebra of dimension $d \geq 4$ over an algebraically closed field $k$ with $d! \in k^{\times}$. Assume that $d$ is divisible by $4$. Then $Sp_{d} (R)$ acts transitively on $\mathit{Um}_{d}^{t} (R)$; in particular, $Sp_{d} (R) e_{d} = SL_{d} (R) e_{d}$.
\end{Kor}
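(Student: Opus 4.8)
The plan is to transfer the transitivity of the \emph{right} action of $Sp_{d} (R)$ on rows, which is the content of Theorem \ref{T3.17}, to the \emph{left} action on columns by means of transposition. The essential observation is that the symplectic group attached to the standard form $\psi_{d}$ is stable under taking transposes. First I would record that, since $\psi_{d} = \psi_{2} \perp \cdots \perp \psi_{2}$ and $\psi_{2}^{t} = -\psi_{2} = \psi_{2}^{-1}$, one has $\psi_{d}^{t} = \psi_{d}^{-1} = -\psi_{d}$. A direct computation then shows that for $\varphi \in Sp_{d} (R)$ the defining identity $\varphi^{t} \psi_{d} \varphi = \psi_{d}$ yields, upon inverting and substituting $\psi_{d}^{-1} = \psi_{d}^{t} = -\psi_{d}$, the identity $\varphi \psi_{d} \varphi^{t} = \psi_{d}$; that is, $\varphi^{t} \in Sp_{d} (R)$. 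Hence $\varphi \mapsto \varphi^{t}$ is a bijection of $Sp_{d} (R)$ onto itself.

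Next I would exploit that transposition $v \mapsto v^{t}$ is a bijection between $\mathit{Um}_{d} (R)$ and $\mathit{Um}_{d}^{t} (R)$ which intertwines the right action of $\varphi$ on rows with the left action of $\varphi^{t}$ on columns, because $(v \varphi)^{t} = \varphi^{t} v^{t}$. Concretely, given a unimodular column $w \in \mathit{Um}_{d}^{t} (R)$, its transpose $w^{t}$ is a unimodular row; by the transitivity furnished by Theorem \ref{T3.17} there exists $\mu \in Sp_{d} (R)$ with $\pi_{d,d} \mu = w^{t}$, i.e.\ $w^{t}$ is the last row of $\mu$. Transposing gives $\mu^{t} e_{d,d} = w$, where $\mu^{t} \in Sp_{d} (R)$ by the first step. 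As $w$ was arbitrary, this proves that $Sp_{d} (R)$ acts transitively on $\mathit{Um}_{d}^{t} (R)$, and in particular $Sp_{d} (R) e_{d,d} = \mathit{Um}_{d}^{t} (R)$.

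For the final assertion $Sp_{d} (R) e_{d} = SL_{d} (R) e_{d}$ I would combine the inclusion $Sp_{d} (R) \subseteq SL_{d} (R)$, which gives $Sp_{d} (R) e_{d,d} \subseteq SL_{d} (R) e_{d,d}$, with the reverse inclusion $SL_{d} (R) e_{d,d} \subseteq \mathit{Um}_{d}^{t} (R) = Sp_{d} (R) e_{d,d}$ obtained in the previous paragraph, so the two orbits coincide. The argument is essentially formal once Theorem \ref{T3.17} is in hand, so I anticipate no serious obstacle; the only point demanding a genuine (if short) verification is the transpose-stability of $Sp_{d} (R)$, which rests entirely on the self-inverse behaviour $\psi_{d}^{t} = \psi_{d}^{-1}$ of the standard alternating matrix.
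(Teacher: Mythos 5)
Your proof is correct and follows essentially the same route as the paper: both transfer the row-transitivity of Theorem \ref{T3.17} to the column action by transposition, using that transposes of symplectic matrices are again symplectic. The only difference is cosmetic --- the paper conjugates $\varphi^{t}$ by the auxiliary matrix $\varphi_{d} = \mathrm{diag}(-1,1,\ldots,-1,1)$, which intertwines $\psi_{d}$ and $\psi_{d}^{t}$, whereas you verify directly that $Sp_{d}(R)$ is stable under transposition (via $\psi_{d}^{-1} = \psi_{d}^{t} = -\psi_{d}$), which renders that detour unnecessary.
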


\begin{proof}
First of all, let

\begin{center}
$\varphi_{2} =
\begin{pmatrix}
-1 & 0 \\
0 & 1
\end{pmatrix}
\in GL_{2} (R)$.
\end{center}

We can then inductively define $\varphi_{2n+2} = \varphi_{2n} \perp \varphi_{2} \in GL_{2n+2} (R)$ for all $n \in \mathbb{N}$. Furthermore, we have $\varphi_{d}^{t} \psi_{d} \varphi_{d}= \psi_{d}^{t}$, $\varphi_{d}^{t} = \varphi_{d}$ and $\varphi_{d}^{-1} = \varphi_{d}$.\\
Now let $v \in \mathit{Um}_{d} (R)$ and $v^{t}$ the corresponding unimodular column. By Theorem \ref{T3.17}, there is $\varphi \in Sp_{d} (R)$ with $\pi_{d,d} \varphi = v \varphi_{d}$. It follows that $\varphi_{d} \varphi^{t} \varphi_{d} \in Sp_{d} (R)$. Finally, one has $\varphi_{d} \varphi^{t} \varphi_{d} e_{d,d} = \varphi_{d} \varphi_{d} v^{t} = v^{t}$, which proves the corollary.
\end{proof}

\begin{Thm}\label{T3.19}
Let $R$ be a $4$-dimensional smooth affine algebra over an algebraically closed field $k$ with $6 \in k^{\times}$. Then stably free $R$-modules of rank $2$ are free if and only if $\tilde{V}_{SL} (R) = 0$.
\end{Thm}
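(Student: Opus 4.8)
The plan is to reduce the statement to the triviality of the orbit space $\mathit{Um}_{3}(R)/SL_{3}(R)$ and then to apply Corollary \ref{C3.7}. First I would set $P_{0} = R^{2}$ equipped with its canonical orientation $\theta_{0}$, so that $\chi_{0} \perp \psi_{2}$ is the standard symplectic form $\psi_{4}$ on $R^{4}$, whence $Sp(\chi_{0} \perp \psi_{2}) = Sp_{4}(R)$, and $Um(P_{0} \oplus R)/SL(P_{0} \oplus R) = \mathit{Um}_{3}(R)/SL_{3}(R)$. The two assertions to be matched are: (a) every stably free $R$-module of rank $2$ is free if and only if $\mathit{Um}_{3}(R)/SL_{3}(R)$ is trivial; and (b) $\mathit{Um}_{3}(R)/SL_{3}(R)$ is trivial if and only if $\tilde{V}_{SL}(R) = 0$.

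For (b) I would verify the hypotheses of Corollary \ref{C3.7}. Since $k$ is algebraically closed with $6 \in k^{\times}$, it is infinite, perfect and of cohomological dimension $0 \leq 1$; hence $SL_{5}(R)$ acts transitively on $\mathit{Um}_{5}(R)$ by the remark following Theorem \ref{T3.2}. Corollary \ref{C3.7} then says that $\mathit{Um}_{3}(R)/SL_{3}(R)$ is trivial precisely when $W_{SL}(R) = 0$ and $SL_{4}(R)e_{4} = Sp_{4}(R)e_{4}$. The second condition is exactly the conclusion of Corollary \ref{C3.18} applied with $d = 4$: indeed $4$ is divisible by $4$ and $4! = 24 \in k^{\times}$ because $6 \in k^{\times}$. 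Since $W_{SL}(R) \cong \tilde{V}_{SL}(R)$, this yields (b).

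For (a) I would invoke the theorem of \cite{FRS} on stably free modules of rank $d-1$. Given any stably free $P$ of rank $2$, the module $P \oplus R$ is stably free of rank $3 = d-1$, so by \cite{FRS} (applicable since $3! = 6 \in k^{\times}$) it is free; thus $P \oplus R \cong R^{3}$ and $P \cong \ker(a)$ for some $a \in \mathit{Um}_{3}(R)$. Under the classification of Section \ref{2.1}, the class of $a$ in $\mathit{Um}_{3}(R)/SL_{3}(R)$ corresponds to a pair $(\ker(a), \theta')$ in the fiber ${\phi_{2}^{o}}^{-1}([(R^{3}, \theta_{0}^{+})])$, with basepoint $(R^{2}, \theta_{0})$. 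Because the determinant map $GL_{n}(R) \to R^{\times}$ is surjective for $n = 2, 3$, the choice of orientation is immaterial, and $P$ is free exactly when this class is the basepoint. Conversely every class in $\mathit{Um}_{3}(R)/SL_{3}(R)$ arises as such a kernel, so the orbit space is trivial if and only if all rank-$2$ stably free modules are free. Chaining (a) and (b) then proves the theorem.

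The main obstacle I anticipate is not a single hard computation, since Corollaries \ref{C3.7} and \ref{C3.18} already encapsulate the deep symplectic-orbit input (Theorem \ref{T3.17}, established via motivic homotopy theory). Rather, it is the careful bookkeeping in step (a): ensuring that every rank-$2$ stably free module genuinely lands in the fiber ${\phi_{2}^{o}}^{-1}([(R^{3}, \theta_{0}^{+})])$ — this is precisely where the \cite{FRS} result on rank-$3$ modules is indispensable — and checking that the passage between freeness as an oriented module and freeness as a plain module is harmless. Once these identifications are secured, the equivalence is immediate.
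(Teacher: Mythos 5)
Your proof is correct and follows essentially the same route as the paper: reduce the freeness of stably free rank-$2$ modules to the triviality of $\mathit{Um}_{3}(R)/SL_{3}(R)$ via the rank-$(d-1)$ freeness theorem of \cite{FRS}, then combine Corollary \ref{C3.7} with Corollary \ref{C3.18}. The only cosmetic difference is that the paper passes through $\mathit{Um}_{3}(R)/GL_{3}(R)$ and cites \cite[Theorem 7.5]{FRS} for that equivalence before noting that $GL$-triviality and $SL$-triviality of the orbit space coincide, whereas you run the identification through the oriented classification of Section \ref{2.1}; your explicit verification of the transitivity hypothesis of Corollary \ref{C3.7} and the orientation bookkeeping are details the paper leaves implicit.
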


\begin{proof}
By \cite[Theorem 7.5]{FRS}, stably free $R$-modules of rank $2$ are free if and only if $Um_{3} (R)/GL_{3} (R)$ is trivial; clearly, this holds if and only if the orbit space $Um_{3} (R)/SL_{3} (R)$ is trivial. Hence the theorem follows immediately from Corollary \ref{C3.7} and Corollary \ref{C3.18}.
\end{proof}

\begin{Kor}\label{C3.20}
Let $R$ be a $4$-dimensional smooth affine algebra over an algebraically closed field $k$ with $6 \in k^{\times}$ and let $X = Spec(R)$. Then $\mathit{Um}_{3} (R)/SL_{3} (R)$ is trivial if $CH^{3} (X)$ and $H^{2} (X, \textbf{\textsc{K}}_{3}^{MW})$ are $2$-divisible. Furthermore, the orbit space $\mathit{Um}_{3} (R)/SL_{3} (R)$ is trivial if $CH^{3} (X) = CH^{4} (X) = 0$ and $H^{2} (X, \textbf{\textsc{I}}^{3})$ is $2$-divisible.
\end{Kor}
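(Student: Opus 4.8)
The plan is to reduce the statement, via Theorem \ref{T3.19}, to the vanishing of the Hermitian $K$-theory group $\tilde{V}_{SL}(R)$, and then to extract this vanishing from the cohomological $2$-divisibility criteria of Proposition \ref{P2.8}. By Theorem \ref{T3.19} the orbit space $\mathit{Um}_{3}(R)/SL_{3}(R)$ is trivial if and only if $\tilde{V}_{SL}(R) = 0$, so in each of the two cases it suffices to prove $\tilde{V}_{SL}(R) = 0$. The decisive preliminary observation is that $\tilde{V}_{SL}(R) \cong W_{SL}(R)$ is a $2$-torsion group: this is already recorded in the proof of Proposition \ref{P2.8}, the underlying reason being that over the algebraically closed field $k$ one has $\langle -1 \rangle = \langle 1 \rangle$, so the hyperbolic form $\langle 1 \rangle + \langle -1 \rangle$ collapses to $2$ in $GW(k)$ and thereby annihilates the reduced elementary symplectic Witt group, i.e. the cokernel $W_{SL}(R)$ of the hyperbolic map $SK_{1}(R) \to W_{E}(R)$. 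Consequently $\tilde{V}_{SL}(R) = 0$ if and only if $\tilde{V}_{SL}(R)$ is $2$-divisible, which converts the whole problem into a $2$-divisibility question.

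For the first statement I would argue as follows. If $CH^{3}(X)$ is $2$-divisible, then Proposition \ref{P2.6} shows that $H^{3}(X, \textbf{GW}_{4}^{3})$ is $2$-divisible. Feeding this together with the assumed $2$-divisibility of $H^{2}(X, \textbf{K}_{3}^{MW})$ into the first assertion of Proposition \ref{P2.8} yields that $W_{E}(R) \cong \tilde{V}(R)$ is $2$-divisible. Now the canonical epimorphism $\tilde{V}(R) \to \tilde{V}_{SL}(R)$ (corresponding to the map $W_{E}(R) \to W_{SL}(R)$) exhibits $\tilde{V}_{SL}(R)$ as a quotient of a $2$-divisible group, hence $\tilde{V}_{SL}(R)$ is itself $2$-divisible. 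Being $2$-torsion it must vanish, and Theorem \ref{T3.19} then gives the triviality of $\mathit{Um}_{3}(R)/SL_{3}(R)$.

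For the second statement the route is even more direct: the hypotheses $CH^{3}(X) = CH^{4}(X) = 0$ together with the $2$-divisibility of $H^{2}(X, \textbf{I}^{3})$ are precisely those of the second assertion of Proposition \ref{P2.8}, which gives at once that $W_{SL}(R) \cong \tilde{V}_{SL}(R)$ is $2$-divisible. Combined again with the $2$-torsion property this forces $\tilde{V}_{SL}(R) = 0$, and Theorem \ref{T3.19} finishes the argument.

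The genuinely delicate point, on which both cases rest, is the $2$-torsion of $\tilde{V}_{SL}(R)$; everything downstream is a formal interplay between $2$-divisibility and $2$-torsion. I expect this to be the main obstacle, since it must be secured over an arbitrary smooth affine fourfold and not merely at the level of the base field, and the cleanest way to pin it down is through the Grothendieck-Witt and Karoubi-periodicity machinery already set up for Proposition \ref{P2.8}. A secondary point worth double-checking is that the epimorphism $\tilde{V}(R) \to \tilde{V}_{SL}(R)$ indeed transports $2$-divisibility, but this is immediate once it is identified with $W_{E}(R) \to W_{SL}(R)$.
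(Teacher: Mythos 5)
Your proposal is correct, and its skeleton coincides with the paper's: reduce via Theorem \ref{T3.19} to $\tilde{V}_{SL}(R)=0$, observe that this group is $2$-torsion, and extract $2$-divisibility from Proposition \ref{P2.6} together with the two parts of Proposition \ref{P2.8} (in the first case passing $2$-divisibility from $W_E(R)$ to its quotient $W_{SL}(R)$, in the second reading it off directly). Where you genuinely diverge is the justification of the $2$-torsion claim, which is the only substantive ingredient beyond citations. The paper proves it with the symbol machinery it has already built: the Vaserstein symbol modulo $SL$ surjects onto $W_{SL}(R)$ (Theorem \ref{T3.2}, whose transitivity hypothesis holds here by Suslin's cancellation theorem), one has in effect $V(a_{1}^{2},a_{2},a_{3}) = 2V(a_{1},a_{2},a_{3})$ by \cite[Lemma 7.4]{FRS}, and $(a_{1}^{2},a_{2},a_{3})$ is completable by the Swan--Towber theorem \cite[Theorem 2.1]{SwT}, hence lies in the $SL_{3}(R)$-orbit of $(1,0,0)$ and its symbol modulo $SL$ vanishes by Theorem \ref{T3.1}; so every class is killed by $2$. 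You instead cite the assertion recorded in the proof of Proposition \ref{P2.8} and sketch the structural reason that $h=\langle 1\rangle+\langle -1\rangle = 2$ in $GW(k)$ kills the cokernel of the hyperbolic map. That idea is sound, but as written ``thereby annihilates'' conceals the two facts that make it work: the projection formula $H_{1,3}\circ f_{1,3} = h\cdot \mathrm{id}$ on $GW_{1}^{3}(R)\cong W'_E(R)$, and the verification that the forgetful map $f_{1,3}$ carries the Pfaffian-trivial part $W_E(R)$ into $SK_{1}(R)$, so that $2x = H_{1,3}(f_{1,3}(x))$ lies in the image of $SK_{1}(R)$ and not merely of $K_{1}(R)$. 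Both points are standard and can be supplied, so your route is viable and arguably more structural; the paper's route has the advantage of using only results already quoted in the text and avoids invoking the multiplicative structure of Grothendieck--Witt theory, which the paper never sets up. Be aware, however, that citing the proof of Proposition \ref{P2.8} by itself would be shaky: the $2$-torsion statement is asserted there without argument, and the paper's own justification for it is exactly the passage in the proof of the present corollary — so your independent K-theoretic argument is what must carry the weight, and it should be written out in full.
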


\begin{proof}
By Theorem \ref{T3.19}, we have to show that $W_{SL} (R) \cong \tilde{V}_{SL} (R) = 0$ if $CH^{3} (X)$ and $H^{2} (X, \textbf{K}_{3}^{MW})$ are $2$-divisible or if $CH^{3} (X) = CH^{4} (X) = 0$ and $H^{2} (X, \textbf{I}^{3})$ is $2$-divisible. But since the Vaserstein symbol surjects onto $W_{SL} (R)$ and $k$ is algebraically closed, it follows from \cite[Lemma 7.4]{FRS} and the Swan-Towber theorem \cite[Theorem 2.1]{SwT} that $W_{SL} (R)$ is $2$-torsion. Hence it suffices to show that $W_{E} (R)$ or $W_{SL} (R)$ is $2$-divisible. So the first statement follows from Propositions \ref{P2.6} and \ref{P2.8}. The second statement follows directly from Proposition \ref{P2.8}.
\end{proof}

\begin{Kor}\label{C3.21}
Let $R$ be a $4$-dimensional smooth affine algebra over an algebraically closed field $k$ with $6 \in k^{\times}$ and let $X = Spec(R)$. Moreover, assume that $CH^{i} (X) = 0$ for $i=1,2,3,4$ and that $H^{2} (X, \textbf{\textsc{I}}^{3}) = 0$. Then all finitely generated projective $R$-modules are componentwise free.
\end{Kor}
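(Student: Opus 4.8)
The plan is to reduce the statement to two independent assertions and then combine them: first, that under the hypotheses every finitely generated projective $R$-module is stably free (equivalently $\tilde{K}_0(R) = 0$); and second, that over such an $R$ every stably free module is in fact free. Once both are in hand, an arbitrary finitely generated projective module of rank $r$ has class $[R^r]$ in $K_0$, hence is stably free, hence free. Throughout I may assume $X = Spec(R)$ is connected, so that projective modules have constant rank.

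For the first assertion I would exploit the vanishing of the Chow groups via the coniveau (Brown--Gersten--Quillen) filtration on $K_0(X)$, a machine already in play in this paper. For $X$ smooth affine of dimension $d = 4$ there is a finite filtration $K_0(X) = F^0 \supseteq F^1 \supseteq \cdots \supseteq F^4 \supseteq F^5 = 0$, in which $F^1 = \tilde{K}_0(X) = \ker(\mathrm{rk}\colon K_0(X) \to \mathbb{Z})$ and each graded piece $F^p/F^{p+1}$ is a subquotient of the term $E_2^{p,-p} = CH^p(X)$ of the spectral sequence. Since $CH^p(X) = 0$ for $p = 1,2,3,4$ by assumption, every graded piece in positive codimension vanishes, forcing $\tilde{K}_0(X) = F^1 = 0$. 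Thus every finitely generated projective $R$-module is stably free.

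For the second assertion I would run through the rank $r$ of a stably free module $P$ case by case, invoking the cancellation and freeness results recalled in the introduction. For $r \geq 5 > d$, Bass cancellation (\cite[Chapter IV, Theorem 3.4]{HB}) makes $P$ cancellative, hence free; for $r = d = 4$, the cancellation theorem for rank-$d$ modules over affine algebras over algebraically closed fields (\cite{S1}) again yields freeness; for $r = 3 = d-1$, the freeness of stably free modules of rank $d-1$ from \cite{FRS} applies since $(d-1)! = 6 \in k^{\times}$; for $r = 1$, a stably free line bundle is classified by $Pic(X) = CH^1(X) = 0$ and is therefore trivial; and $r = 0$ is vacuous. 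The one genuinely new case is $r = 2$, and this is exactly the content of the earlier results of this paper: since $CH^3(X) = CH^4(X) = 0$ and $H^2(X, \textbf{I}^{3}) = 0$ is in particular $2$-divisible, Corollary \ref{C3.20} shows that $Um_3(R)/SL_3(R)$ is trivial, whereupon Theorem \ref{T3.19} gives that stably free $R$-modules of rank $2$ are free.

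The main obstacle, namely the rank-$2$ case, has thus already been absorbed into Corollary \ref{C3.20} and Theorem \ref{T3.19}; the present corollary is essentially an assembly of that result with the classical cancellation theorems and the $K_0$-computation. The only point demanding a little care is the identification in the first step: one must check that $F^1 K_0(X)$ coincides with $\tilde{K}_0(X)$ (which uses the connectedness of $X$, so that $E_\infty^{0,0} = \mathbb{Z}$ via the rank) and that the coniveau graded pieces are controlled \emph{integrally} by the Chow groups rather than merely up to torsion, which for smooth affine $X$ follows from the identification $E_2^{p,-p} = CH^p(X)$ in the Brown--Gersten--Quillen spectral sequence.
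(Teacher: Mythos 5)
Your proposal is correct and follows essentially the same route as the paper's own proof: the vanishing of $CH^{i}(X)$ for $i=1,2,3,4$ collapses the coniveau filtration of $K_{0}(R)$ so that every finitely generated projective module is stably free, freeness in rank $\geq 3$ comes from the classical cancellation results of \cite{S1} and \cite{FRS}, and rank $2$ is handled by Corollary \ref{C3.20} together with Theorem \ref{T3.19}. Your write-up is merely more explicit than the paper's (the case-by-case analysis, including rank $1$ via $Pic(X) \cong CH^{1}(X) = 0$, which the paper leaves implicit), but the substance is identical.
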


\begin{proof}
We denote by $[X,\mathbb{Z}]$ the group of continuous maps $X \rightarrow \mathbb{Z}$. We may assume that $X = Spec (R)$ is connected; in particular $[X,\mathbb{Z}] \cong \mathbb{Z}$. The fact that $CH^{i} (X) = 0$ for $i=1,2,3,4$ implies that $F^{i} K_{0} (R) = 0$ for $i=1,2,3,4$. Hence $rank: K_{0} (R) \xrightarrow{\cong} [X,\mathbb{Z}]$ is an isomorphism and all finitely generated projective $R$-modules are stably free. Since stably free $R$-modules of rank $\geq 3$ are free (cp. \cite{S1} and \cite{FRS}), it suffices to prove that stably free modules of rank $2$ are free. But this follows from Corollary \ref{C3.20}.
\end{proof}

\begin{Rem}\label{R3.22}
The generalized Serre conjecture on algebraic vector bundles asserts that algebraic vector bundles on a topologically contractible smooth affine complex variety $X$ are trivial. If $dim(X) \leq 2$, this conjecture is known to hold; the conjecture remains open in higher dimension. Moreover, if $dim (X) = 3$, all algebraic vector bundles on $X$ are trivial if and only if $CH^{2} (X) = CH^{3} (X) = 0$. If $dim (X) = 4$, it follows from Corollary \ref{C3.21} that all algebraic vector bundles on $X$ are trivial if $CH^{2} (X) = CH^{3} (X) = CH^{4} (X) = 0$ and $H^{2} (X, \mathbf{I}^{3}) = 0$ (one always has $CH^{1} (X) = 0$ by \cite[Theorem 5.2.7]{AO}). We refer the reader to \cite[Section 5.2]{AO} for a nice survey of the results on this conjecture.\\
Another open conjecture asserts that topologically contractible smooth affine complex varieties are stably $\mathbb{A}^{1}$-contractible (cp. \cite[Conjecture 5.3.11]{AO}). The conditions in Corollary \ref{C3.21} are satisfied if $X$ is a stably $\mathbb{A}^{1}$-contractible smooth affine variety over an algebraically closed field: Since the restriction of $\mathbf{I}^{5}$ to the small Nisnevich site of $X$ is trivial (cp. \cite[Proposition 5.1]{AF2}), the long exact cohomology sequence associated to $0 \rightarrow \mathbf{I}^{4}/\mathbf{I}^{5} \rightarrow \mathbf{I}^{3}/\mathbf{I}^{5} \rightarrow \mathbf{I}^{3}/\mathbf{I}^{4} \rightarrow 0$ implies that $H^{2} (X, \mathbf{I}^{3}) = 0$ as soon as $H^{2} (X, \mathbf{K}^{M}_{4}/2) = H^{2} (X, \mathbf{K}^{M}_{3}/2) = 0$ (because of Voevodsky's resolution of the Milnor conjectures). But since $X$ is assumed to be stably $\mathbb{A}^{1}$-contractible, all the cohomology groups $H^{i} (X,\mathbf{K}^{M}_{j}/2)$ and $H^{i} (X,\mathbf{K}^{M}_{j})$ (and hence also Chow groups) vanish. Altogether, it follows from Corollary \ref{C3.21} that \cite[Conjecture 5.3.11]{AO} would imply the generalized Serre conjecture on algebraic vector bundles in dimension $4$.\\
Finally, let us mention that explicit examples of stably $\mathbb{A}^1$-contractible smooth affine varieties of dimension $4$ over algebraically closed fields of characteristic $0$ have been constructed in \cite[Theorem 1.19]{DPO}. It also follows from Corollary \ref{C3.21} that a smooth affine complex variety of dimension 4 whose motive is isomorphic to that of a point has only trivial vector bundles, because any such variety is stably $\mathbb{A}^1$-contractible. A construction of such varieties has been given in \cite[Example 6]{As}.
\end{Rem}

\end{document}